\newtheorem{global-theorem}{Theorem}
\newtheorem{theorem}{Theorem}[section]
\newtheorem{lemma}[theorem]{Lemma}
\newtheorem{remark}[theorem]{Remark}
\newtheorem{corollary}[theorem]{Corollary}
\newtheorem{hypothesis}[theorem]{Hypothesis}
\newtheorem{definition}[theorem]{Definition}
\newtheorem{proposition}[theorem]{Proposition}
\newtheorem{prop-def}[theorem]{Proposition-Definition}
\newtheorem{lemma-def}[theorem]{Lemma-Definition}
\newtheorem{question}[theorem]{Question}
\numberwithin{equation}{section}
\begin{document}
\author[A. Aliouche]{Abdelkrim Aliouche}
\address{Laboratoire des syst\`{e}mes dynamiques et contr\^{o}le, 
Universit\'{e} Larbi Ben M'Hidi, Oum-El-Bouaghi, 04000, Alg\'{e}rie}
\email{alioumath@gmail.com}
\author[C. Simpson]{Carlos Simpson}
\address{CNRS, Laboratoire J. A. Dieudonn\'e, UMR 6621 \\
Universit\'e de Nice-Sophia Antipolis\\
06108 Nice, Cedex 2, France}
\email{carlos@unice.fr}
\title[Approximate categories]{Approximate categorical structures}
\subjclass[2010]{Primary 18A05; Secondary 54E35, 08A72}
\keywords{Metric, $2$-Metric space, Category, Functor, Yoneda embedding, Path, Triangle}

\begin{abstract}
We consider notions of metrized categories, and then approximate categorical 
structures defined by a function of three variables generalizing the notion 
of $2$-metric space. We prove an embedding theorem giving sufficient conditions 
for an approximate categorical structure to come from an inclusion into a
metrized category.
\end{abstract}

\maketitle

\section{Introduction}

G\"{a}hler \cite{Gahler} introduced the notion of \emph{$2$-metric space}
which is a set $X$ together with a function called the $2$-metric $%
d(x,y,z)\in {\mathbb{R}}$ satisfying some properties generalizing the axioms
for a metric space. Notably, the triangle inequality generalizes to the 
\emph{tetrahedral inequality} for a $2$-metric 
\begin{equation*}
d(x,y,w)\leq d(x,y,z)+d(y,z,w)+d(x,z,w).
\end{equation*}%
One of the main examples of a $2$-metric is obtained by setting $d(x,y,z)$
equal to the area of the triangle spanned by $x,y,z$. Here, we consider
triangles with straight edges. One might imagine considering more generally
triangles with various paths as edges. In this case, in addition to $x$, $y$
and $z$, we should specify a path $f$ from $x$ to $y$, a path $g$ from $y$
to $z$ and a path $h$ from $x$ to $z$. We could then set $d(f,g,h)$ to be
the area of the figure spanned by these paths, more precisely the minimal
area of a disk whose boundary consists of the circle formed by these three
paths.

This generalization takes us in the direction of category theory: we may
think of $d(f,g,h)$ as being some kind of distance between $h$ and a
\textquotedblleft composition\textquotedblright\ of $f$ and $g$. We will
formalize this notion here and call it an \emph{approximate categorical
structure}.

Generalizing the notion of $2$-metric space in this direction may be viewed
as directly analogue to the recent paper of Weiss \cite{Weiss} in which he
proposed the notion of \textquotedblleft metric $1$-space\textquotedblright\
which was a category together with a \textquotedblleft distance
function\textquotedblright\ $d(f)$ for arrows $f:x\rightarrow y$, which
would then be required to satisfy the analogues of the usual axioms of a
metric space. In his setup, the pair $(x,y)$ is replaced by a pair of objects plus an
arrow $f$ from $x$ to $y$.

In our situation, we would like to generalize the notion of $2$-metric in a
similar way replacing a triple of points $(x,y,z)$ by a triple of objects
together with arrows $f:x\rightarrow y$, $f:y\rightarrow z$ and $%
h:x\rightarrow z$. In this setup, we don't need to start with a category but
only with a graph and the $2$-metric itself represents some kind of
approximation of the notion of composition.

In an approximate categorical structure, then the underlying set-theoretical
object is a graph, consisting of a set of objects $X$ and sets of arrows $%
A(x,y)$ for any $x,y\in X$. The distance function $d(f,g,h)$ is required to
be defined whenever $f\in A(x,y)$, $g\in A(y,z)$ and $h\in A(x,z)$. The main
axioms, generalizing the tetrahedral axiom of a $2$-metric space, are the 
\emph{left and right associativity properties}. These concern the situation
of a sequence of objects $x,y,z,w$ and arrows going in the increasing
direction: 
\begin{equation*}
\setlength{\unitlength}{.3mm}%
\begin{picture}(200,120)


\put(20,50){$x$}

\put(85,50){$y$}

\put(135,90){$z$}

\put(180,50){$w$}

\qbezier(29,52)(52,52)(80,52)
\put(78,52){\vector(1,0){5}}
\put(52,56){${\scriptstyle f}$}

\qbezier(94,59)(113,73)(132,87)
\put(129,84){\vector(1,1){5}}
\put(105,75){${\scriptstyle g}$}

\qbezier(144,87)(160,73)(175,59)
\put(171,63){\vector(1,-1){5}}
\put(163,73){${\scriptstyle h}$}

\qbezier(29,59)(80,90)(127,94)
\put(127,94){\vector(1,0){5}}
\put(82,88){${\scriptstyle a}$}

\qbezier(95,52)(144,52)(173,52)
\put(173,52){\vector(1,0){5}}
\put(130,56){${\scriptstyle b}$}

\qbezier(27,47)(104,10)(175,46)
\put(173,45){\vector(3,2){5}}
\put(104,20){${\scriptstyle c}$}



\end{picture}
\end{equation*}%
The left associativity condition says 
\begin{equation*}
d(a,h,c)\leq d(f,g,a)+d(g,h,b)+d(f,b,c).
\end{equation*}%
It means that if $a$ is close to a composition of $f$ and $g$, if $b$ is
close to a composition of $g$ and $h$ and if $c$ is close to a composition
of $f$ and $b$, then $c$ is also close to a composition of $a$ and $h$.

Looking at the same picture but viewed with the arrow $c$ passing along the
top: 
\begin{equation*}
\setlength{\unitlength}{.3mm} 
\begin{picture}(200,80)


\put(20,50){$x$}

\put(70,10){$y$}

\put(115,50){$z$}

\put(180,50){$w$}

\qbezier(29,45)(48,31)(67,17)
\put(64,20){\vector(1,-1){5}}
\put(42,23){${\scriptstyle f}$}

\qbezier(79,17)(95,31)(110,45)
\put(106,41){\vector(1,1){5}}
\put(98,27){${\scriptstyle g}$}

\qbezier(126,52)(149,52)(173,52)
\put(171,52){\vector(1,0){5}}
\put(140,56){${\scriptstyle h}$}

\qbezier(29,52)(70,52)(111,52)
\put(110,52){\vector(1,0){5}}
\put(70,56){${\scriptstyle a}$}

\qbezier(80,13)(135,18)(171,47)
\put(171,47){\vector(2,1){5}}
\put(147,24){${\scriptstyle b}$}

\qbezier(30,58)(104,95)(172,59)
\put(171,60){\vector(3,-2){5}}
\put(104,82){${\scriptstyle c}$}

\end{picture}
\end{equation*}
the right associativity condition says 
\begin{equation*}
d(f, b, c) \leq d(f,g,a) + d(g,h,b) + d(a,h,c). 
\end{equation*}

It is natural to add the data of identity elements $1_{x}\in A(x,x)$ such
that 
\begin{equation*}
d(1_{x},f,f)=0\text{ \ and \ }d(f,1_{y},f)=0\text{.}
\end{equation*}%
Therefore, The theory works out pretty nicely. For example, we obtain a
distance function on the arrow sets 
\begin{equation*}
\mathrm{dist}_{A(x,y)}(f,g):=d(1_{x},f,g).
\end{equation*}%
This is a pseudometric, that is to say it satisfies the triangle inequality
but there might be distinct pairs $f,g$ at distance zero apart. We may
however identify them together. This is discussed in Section \ref{metarrowsets}.

Perhaps a more direct way to introduce a categorical notion such that the
arrow sets are metric spaces, would be just to consider a category enriched
in metric spaces. Here, it will be useful for our development to consider
the enrichment as being with respect to the product structure where the
metric on the product of two metric spaces is the sum of the metrics on the
pieces:%
\begin{equation*}
d((x,x^{\prime }),(y,y^{\prime })):=d(x,y)+d(x^{\prime },y^{\prime })\text{.}
\end{equation*}
We describe this theory first, in Section \ref{metrizedcategories}.

A metrized category then yields an approximate categorical structure, with
the tetrahedral inequalities stated in Proposition \ref{mcac}.

Approximate categorical structures are weaker objects, in that any subgraph
of an AC structure will have an induced AC structure. In particular, if we start with a
metrized category and take any subgraph then we get an AC structure. It is
natural to ask whether an arbitrary AC structure arises in this way. There
is a good notion of \emph{functor} $(X,A,d)\rightarrow (Y,B,d)$ between
two AC structures, see Section \ref{functors}, 
so we can look at functors from an AC structure to
metrized categories. Any such functor induces a distance on the free
category $\mathbf{Free}(X,A)$ generated by the graph $(X,A)$ and we obtain a
distance denoted $d^{\mathrm{max}}$ on $\mathbf{Free}(X,A)$ as the supremum
of these distances. This is discussed in Section \ref{fmc}. The upper bound 
\begin{equation*}
d^{\mathrm{max}}(f,g,h)\leq d(f,g,h)
\end{equation*}
is tautological.

Our main result will be to give a strong lower bound under a certain
hypothesis. Recall that in \cite{AS1} and \cite{AS2}, it was useful to
introduce a new axiom, called \emph{transitivity}, for $2$-metric spaces.
This was a metric version of the idea that given four points, if two triples
are colinear then all four are colinear, especially if the two middle points
aren't too close together.

In Section \ref{sec-trans}, we introduce the analogue of the transitivity axiom for AC
structures in Definition \ref{abstrans}. This axiom turns out to be what is
required in order to be able to define the \emph{Yoneda functors} $Y_{u}$,
for $u\in X$. We would like to set $Y_{u}(x):=A(u,x)$ together with its
distance. This is a metric space and the distance $d(a,f,b)$ allows us to
define a \emph{metric correspondence} from $Y_{u}(x)$ to $Y_{u}(y)$, see
Section \ref{yoneda}. There is a metrized category of (bounded) metric spaces with
morphisms the metric correspondences. If $(X,A,d)$ is transitive, then $Y_{u}
$ is a functor from $(X,A,d)$ to this metrized category.

Existence of these functors yields lower bounds on $d^{\mathrm{max}}(f,g,h)$
and somewhat surprisingly the lower bounds are sharp: we have that
\begin{equation*}
d^{\mathrm{max}}(f,g,h)=d(f,g,h)
\end{equation*}
\linebreak\ whenever $(X,A,d)$ is transitive (also needed are boundedness
and a very weak graph transitivity hypothesis \ref{graphcomp}). We obtain
the following embedding theorem saying that an AC structure with these
properties is obtained as a subgraph of a metrized category.

\begin{theorem}
\label{mainintro} Suppose $(X,A,d)$ is an approximate categorical structure
that is bounded, satisfies the separation property 
(Definition \ref{separated}), is absolutely transitive (Definition \ref{abstrans}) and
satisfies Hypothesis \ref{graphcomp}. Then there exists a metrized category $%
{{\mathcal{C}}}$ with $\mathrm{Ob}({{\mathcal{C}}})=X$ and inclusions $%
A(x,y)\subset {{\mathcal{C}}}(x,y)$ such that for any $f\in A(x,y)$, $g\in
A(y,z)$ and $h\in A(x,z)$, 
\begin{equation*}
d(f,g,h)=d_{{{\mathcal{C}}}}(g\circ _{{{\mathcal{C}}}}f,h).
\end{equation*}
\end{theorem}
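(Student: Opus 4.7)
The plan is to take $\Cc$ to be a suitable quotient of the free category $\mathbf{Free}(X,A)$ on the graph $(X,A)$, equipped with the distance $d^{\mathrm{max}}$ introduced in Section \ref{fmc}. The tautological upper bound $d^{\mathrm{max}}(f,g,h)\leq d(f,g,h)$ already gives one half of the desired equality, so the real content is to produce a matching lower bound, and for this I would use the Yoneda functors from Section \ref{yoneda}, whose existence is guaranteed by absolute transitivity.

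More concretely, I would first check that under boundedness together with Hypothesis \ref{graphcomp}, the distance $d^{\mathrm{max}}$ takes finite values on every triple of composable arrows of $\mathbf{Free}(X,A)$, so that it defines an honest pseudo-distance. The construction of $d^{\mathrm{max}}$ as a supremum over pullbacks along all functors to metrized categories automatically makes the composition of $\mathbf{Free}(X,A)$ contract for the product metric, so that $\mathbf{Free}(X,A)$ with $d^{\mathrm{max}}$ is a metrized category up to pairs of arrows at distance zero.

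The central step is then the lower bound $d^{\mathrm{max}}(f,g,h)\geq d(f,g,h)$ for $f\in A(x,y)$, $g\in A(y,z)$, $h\in A(x,z)$. For this I would take the Yoneda functor $Y_{x}$ from $(X,A,d)$ into the metrized category of bounded metric spaces with metric correspondences. Under $Y_{x}$, the two arrows $Y_{x}(g)\circ Y_{x}(f)$ and $Y_{x}(h)$ are correspondences from $Y_{x}(x)$ to $Y_{x}(z)$, and the distinguished element $1_{x}\in Y_{x}(x)$ together with the identities $d(1_{x},f,f)=0=d(1_{x},h,h)$ would be used to pin down the weight of the pair $(1_{x},h)$ in the composed correspondence and to show that it is bounded below by $d(f,g,h)$. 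Since $Y_{x}$ is a functor, this produces the desired lower bound on $d^{\mathrm{max}}(f,g,h)$, and combined with the tautological upper bound gives the equality $d^{\mathrm{max}}(f,g,h)=d(f,g,h)$.

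Finally, I would pass to the metrized quotient of $\mathbf{Free}(X,A)$ by the relation identifying arrows at $d^{\mathrm{max}}$-distance zero; the separation property (Definition \ref{separated}) ensures that distinct elements of $A(x,y)$ stay distinct in the quotient, which furnishes the required inclusion $A(x,y)\subset\Cc(x,y)$. The relation $d_{\Cc}(g\circ_{\Cc}f,h)=d^{\mathrm{max}}(f,g,h)=d(f,g,h)$ is then exactly the statement of the theorem. The main obstacle is the Yoneda lower bound: one has to rule out the possibility that the infimum over intermediate elements entering the composition of metric correspondences $Y_{x}(g)\circ Y_{x}(f)$ drops below $d(f,g,h)$ via some clever auxiliary $f'\in A(x,y)$. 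The two associativity axioms recalled in the introduction are precisely what is needed to exclude this: applied to the quadruples $(1_{x},f',g,h)$ and $(1_{x},f,g,h)$, they bound $d(f,g,h)$ in terms of the quantities entering any candidate factorization, so that every path through the composition pays at least $d(f,g,h)$ up to the vanishing unit terms. Matching these estimates exactly to the definition of composition of metric correspondences in Section \ref{yoneda} is the technical heart of the argument.
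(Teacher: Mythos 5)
Your proposal is correct and follows essentially the same route as the paper: the target is the separated quotient of $\mathbf{Free}(X,A)$ under $d^{\mathrm{max}}$, the upper bound is tautological, and the lower bound comes from the Yoneda functor $Y_{x}$ (an AC functor by absolute transitivity) evaluated at the pair $(1_{x},h)$, with the associativity-based estimate of Lemma \ref{phileft} closing the gap between $d(f,g,h)$ and $\inf_{b}(\phi(f,b)+d(b,g,h))$, exactly as in Theorem \ref{main}. Your final step, passing to the quotient and using the separation property to get the inclusion $A(x,y)\subset{\mathcal{C}}(x,y)$, also matches the paper's argument.
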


In Section \ref{pathsRn} we discuss how the $d^{\rm max}$ construction, applied to 
the standard $2$-metric space with triangle area, gives rise to the category of 
piecewise-linear paths. 
Then, in Section \ref{questions}, we discuss numerous further questions and directions. 

\section{Metrized categories}
\label{metrizedcategories}

A category is a triple $(X,A,\circ )$ where $X$ is the set of objects, $%
A(x,y)$ is the set of arrows from $x$ to $y$ for each pair of objects and $%
g\circ f\in A(x,z)$ whenever $f\in A(x,y)$ and $g\in A(y,z)$. These are
subject to the existence of an identity arrow $1_{x}\in A(x,x)$ satisfying $%
f\circ 1_{x}=f$ and $1_{y}\circ f=f$ for all $f\in A(x,y)$ and the
associativity axiom for $f\in A(x,y)$, $g\in A(y,z)$ and $h\in A(z,w)$
requiring 
\begin{equation*}
h\circ (f\circ g)=(h\circ f)\circ g.
\end{equation*}
We can introduce the notion of \emph{pseudometric structure} on a category
as above, which is by definition a pseudometric $\phi (f,g)$ defined for $%
f,g\in A(x,y)$, satisfying the properties of a pseudometric: 
\begin{equation*}
\phi (f,f)=0,\;\;\phi (f,g)=\phi (g,f)\text{ \ and \ }\phi (f,g)\leq \phi
(f,h)+\phi (h,g)\text{.}
\end{equation*}%
If in addition%
\begin{equation*}
\phi (f,g)=0\Rightarrow f=g\text{,}
\end{equation*}
then it is a metric. This separation condition will be imposed as
appropriate, see also Definition \ref{separated} below.

We require the following compatibility with the structure of category: for
any triple of objects $x,y,z\in X$, the composition function 
\begin{equation*}
A(x,y)\times A(y,z)\rightarrow A(x,z),\;\;\;(f,g)\mapsto g\circ f
\end{equation*}%
should be nonincreasing, where we provide the product on the left with the
metric 
\begin{equation*}
(\phi +\phi )((f,g),(f^{\prime },g^{\prime })):=\phi (f,f^{\prime })+\phi
(g,g^{\prime }).
\end{equation*}%
In concrete terms this is equivalent to requiring that 
\begin{equation}
\phi (g\circ f,g^{\prime }\circ f^{\prime })\leq \phi (f,f^{\prime })+\phi
(g,g^{\prime }).  \label{mcaxiom}
\end{equation}%
We call such a structure a \emph{pseudo-metrized category}. If the separation
property holds we call it a \emph{metrized category}.

If there is no confusion, we denote $\phi (f,f^{\prime })$ by just $\mathrm{%
dist}_{A(x,y)}(f,f^{\prime })$ or $d_{A(x,y)}(f,f^{\prime })$.

As motivation for the next section, if $(X,A,\circ ,\phi )$ is a
pseudo-metrized category, we can define a function of three variables $%
d(f,g,h)$ defined whenever $f\in A(x,y)$, $g\in A(y,z)$ and $h\in A(x,z)$ by
putting 
\begin{equation}
d(f,g,h):=\phi (g\circ f,h).  \label{mc2ac}
\end{equation}

\begin{proposition}
\label{mcac} This function satisfies the following properties: \newline
---for all $f,g\in A(x,y)$ we have%
\begin{equation*}
d(f,1_{y},g)=d(1_{x},f,g)=\phi (f,g)\text{;}
\end{equation*}
\newline
---for all $(f,g,h;a,b;c)$ with 
\begin{equation*}
x\overset{f}{\rightarrow }y\overset{g}{\rightarrow }z\overset{h}{\rightarrow 
}w
\end{equation*}%
and 
\begin{equation*}
x\overset{a}{\rightarrow }z,\;\;y\overset{b}{\rightarrow }w,\;\;x\overset{c}{%
\rightarrow }w
\end{equation*}%
we have: 
\begin{equation*}
d(f,b,c)\leq d(f,g,a)+d(g,h,b)+d(a,h,c)
\end{equation*}%
and 
\begin{equation*}
d(a,h,c)\leq d(f,g,a)+d(g,h,b)+d(f,b,c).
\end{equation*}
\end{proposition}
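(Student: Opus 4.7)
The first assertion is immediate from the identity axioms of the underlying category: since $1_y \circ f = f$ and $f \circ 1_x = f$, substitution into the defining formula \eqref{mc2ac} gives $d(f,1_y,g) = \phi(f,g)$ and $d(1_x,f,g) = \phi(f,g)$. So the real content is the pair of inequalities, and my plan is to deduce each of them by interposing the ``honest'' composite $h\circ g\circ f \in A(x,w)$ as a bridge and then applying the triangle inequality for $\phi$ together with the compatibility axiom \eqref{mcaxiom}.

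For the first inequality, I would bound $d(f,b,c) = \phi(b\circ f, c)$ by the three-term triangle inequality
\begin{equation*}
\phi(b\circ f, c) \leq \phi(b\circ f,\, h\circ g\circ f) + \phi(h\circ g\circ f,\, h\circ a) + \phi(h\circ a,\, c).
\end{equation*}
The last term is exactly $d(a,h,c)$. For the first term I would apply \eqref{mcaxiom} to the pair $(f,b)$ versus $(f, h\circ g)$, which gives $\phi(b\circ f, (h\circ g)\circ f) \leq \phi(f,f) + \phi(b, h\circ g) = d(g,h,b)$, using associativity of $\circ$ to identify $(h\circ g)\circ f$ with $h\circ g\circ f$. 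For the middle term I would apply \eqref{mcaxiom} to $(g\circ f, h)$ versus $(a,h)$, giving $\phi(h\circ (g\circ f), h\circ a) \leq \phi(g\circ f, a) + \phi(h,h) = d(f,g,a)$. Summing yields the first inequality.

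The second inequality is entirely symmetric: I would start from
\begin{equation*}
\phi(h\circ a, c) \leq \phi(h\circ a,\, h\circ g\circ f) + \phi(h\circ g\circ f,\, b\circ f) + \phi(b\circ f,\, c),
\end{equation*}
whose last term is $d(f,b,c)$, and then apply \eqref{mcaxiom} and associativity in exactly the same way to bound the first two terms by $d(f,g,a)$ and $d(g,h,b)$ respectively.

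There is no genuine obstacle here; the only thing worth being careful about is bookkeeping the associativity identification $(h\circ g)\circ f = h\circ (g\circ f)$ when invoking \eqref{mcaxiom}, which is used precisely so that we may estimate composition in two different groupings by controlling one factor at a time. The symmetry of $\phi$ is used implicitly throughout to switch the order of the two arguments when needed.
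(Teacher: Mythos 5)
Your proposal is correct and follows essentially the same route as the paper: both proofs interpose the composite $h\circ g\circ f$ in a three-term triangle inequality for $\phi$ and then use the compatibility axiom \eqref{mcaxiom} (plus associativity and symmetry of $\phi$) to identify the resulting terms with $d(g,h,b)$ and $d(f,g,a)$. No gaps.
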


\begin{proof}
For the first statements note that%
\begin{equation*}
d(f,1_{y},g):=\phi (1_{y}\circ f,g)=\phi (f,g)
\end{equation*}
and 
\begin{equation*}
d(1_{x},f,g):=\phi (f\circ 1_{x},g)=\phi (f,g)\text{.}
\end{equation*}

In the second part, applying the definitions, the first inequality that we
would like to show is equivalent to 
\begin{equation*}
\phi (b\circ f,c)\leq \phi (g\circ f,a)+\phi (h\circ g,b)+\phi (h\circ a,c).
\end{equation*}%
By the triangle inequality in $A(x,w)$ applied to the sequence $b\circ
f,h\circ g\circ f,h\circ a,c$ we have 
\begin{equation*}
\phi (b\circ f,c)\leq \phi (b\circ f,h\circ g\circ f)+\phi (h\circ g\circ
f,h\circ a)+\phi (h\circ a,c).
\end{equation*}%
The composition axiom \eqref{mcaxiom} implies that 
\begin{equation*}
\phi (h\circ g\circ f,h\circ a)\leq \phi (h,h)+\phi (g\circ f,a)=\phi
(g\circ f,a).
\end{equation*}%
Similarly%
\begin{equation*}
\phi (b\circ f,h\circ g\circ f)\leq \phi (b,h\circ g)=\phi (h\circ g,b)\text{%
.}
\end{equation*}%
Therefore we get 
\begin{equation*}
\phi (b\circ f,c)\leq \phi (h\circ g,b)+\phi (g\circ f,a)+\phi (h\circ a,c).
\end{equation*}%
Putting in the definition $d(f,g,h):=\phi (g\circ f,h)$ this gives 
\begin{equation*}
d(f,b,c)\leq d(g,h,b)+d(f,g,a)+d(a,h,c),
\end{equation*}%
which is the first inequality. For the second inequality, we would like to
show 
\begin{equation*}
\phi (h\circ a,c)\leq \phi (g\circ f,a)+\phi (h\circ g,b)+\phi (b\circ f,c).
\end{equation*}%
Apply the triangle inequality in $A(x,w)$ to the sequence $h\circ a,h\circ
g\circ f,b\circ f,c$. We get 
\begin{equation*}
\phi (h\circ a,c)\leq \phi (h\circ a,h\circ g\circ f)+\phi (h\circ g\circ
f,b\circ f)+\phi (b\circ f,c).
\end{equation*}%
As before, \eqref{mcaxiom} implies that 
\begin{equation*}
\phi (h\circ a,h\circ g\circ f)\leq \phi (g\circ f,a)\text{ and }\phi
(h\circ g\circ f,b\circ f)\leq \phi (h\circ g,b).
\end{equation*}
Hence we obtain 
\begin{equation*}
\phi (h\circ a,c)\leq \phi (g\circ f,a)+\phi (h\circ g,b)+\phi (b\circ f,c),
\end{equation*}%
in other words 
\begin{equation*}
d(a,h,c)\leq d(f,g,a)+d(g,h,b)+d(f,b,c)
\end{equation*}%
which is the second inequality.
\end{proof}

Suppose $(X,A,\circ ,\phi )$ is a category with a pseudometric $\phi $.
Define a new set $\tilde{A}(x,y)$ to be the quotient of $A(x,y)$ by the
relation that $x\sim x^{\prime }$ if $\phi (x,x^{\prime })=0$. This is an
equivalence relation. It is compatible with the composition operation by the
axiom \eqref{mcaxiom}. Therefore $\circ $ induces a composition which we
again denote $\circ $ on $(X,\tilde{A})$. Also the distance $\phi $ induces
a metric $\tilde{\phi}$ on $\tilde{A}(x,y)$, and $(X,\tilde{A},\circ ,\tilde{%
\phi})$ is a metrized category satisfying the separation property.

\section{Approximate categories}

\label{approximatecategories}

Abstracting the properties given by Proposition \ref{mcac}, 
we can forget about the composition operation and just
look at the function of three variables 
$d(f,g,h)$.

Consider a set of objects $X$ and for each $x,y\in X$ a set of arrows $A(x,y)
$. Suppose we have isolated an \emph{identity arrow} $1_x\in A(x,x)$ for
each $x\in X$. Consider a \emph{triangular distance function} $d(f,g,h)$
defined whenever 
\begin{equation*}
\setlength{\unitlength}{.3mm} 
\begin{picture}(160,90)


\put(20,30){$x$}

\put(70,70){$y$}

\put(115,30){$z$}

\qbezier(29,38)(48,51)(67,66)
\put(64,63){\vector(1,1){5}}
\put(42,57){${\scriptstyle f}$}

\qbezier(79,66)(95,52)(110,38)
\put(106,42){\vector(1,-1){5}}
\put(98,56){${\scriptstyle g}$}

\qbezier(30,32)(70,32)(107,32)
\put(107,32){\vector(1,0){5}}
\put(70,36){${\scriptstyle h}$}

\end{picture}
\end{equation*}
that is to say $f\in A(x,y)$, $g\in A(y,z)$ and $h\in A(x,z)$.

Assume the following axioms:

\noindent \textbf{Identity axioms}---

\noindent Left identity : for all $f\in A(x,y)$ we have%
\begin{equation*}
d(f,1_{y},f)=0\text{;}
\end{equation*}

\noindent Right identity : for all $f\in A(x,y)$ we get%
\begin{equation*}
d(1_{x},f,f)=0\text{;}
\end{equation*}

\noindent \textbf{Associativity axioms}---given a ``tetrahedron'' $(f,g,h;
a,b; c)$ with 
\begin{equation*}
x\overset{f}{\rightarrow} y \overset{g}{\rightarrow} z \overset{h}{%
\rightarrow} w 
\end{equation*}
and 
\begin{equation*}
x \overset{a}{\rightarrow} z, \;\; y \overset{b}{\rightarrow} w, \;\; x 
\overset{c}{\rightarrow} w, 
\end{equation*}

\noindent Left associativity : 
\begin{equation*}
d(a, h, c) \leq d(f,g,a) + d(g,h,b) + d(f,b,c); 
\end{equation*}
Right associativity: 
\begin{equation*}
d(f, b, c) \leq d(f,g,a) + d(g,h,b) + d(a,h,c). 
\end{equation*}

\begin{definition}
\label{acs} An \emph{approximate categorical structure} (\emph{AC-structure}%
) is a triple $(X,A,d)$, together with the specified identities $1_x$,
satisfying the above axioms.
\end{definition}

An \emph{approximate semi-categorical structure} is a triple $(X,A,d)$,
without specified identities, satisfying just the associativity axioms.

\begin{lemma}
Suppose $(X,A,d)$ is an approximate categorical structure. Then for any $%
x,y,z\in X$ with $f\in A(x,y)$, $g\in A(y,z)$ and $h\in A(x,z)$, we have $%
d(f,g,h)\geq 0$.
\end{lemma}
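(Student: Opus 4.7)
The plan is to derive nonnegativity from a single application of one of the associativity axioms, by collapsing the tetrahedron so that the identity axioms kill two of its three terms. Specifically, given $f\in A(x,y)$, $g\in A(y,z)$, $h\in A(x,z)$, I will build a tetrahedron on the four objects $x,y,z,w$ with $w:=z$, and choose the outer arrows to be identities or copies of $f,g,h$ so that most of the right-hand side of the left associativity inequality vanishes.

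Concretely, I take the tetrahedron $(f,g,h_{0};\,a,b;\,c)$ where $h_{0}:=1_{z}:z\to w=z$, $a:=h\in A(x,z)$, $b:=g\in A(y,z)=A(y,w)$, and $c:=h\in A(x,z)=A(x,w)$. The left associativity axiom then reads
\begin{equation*}
d(a,h_{0},c)\leq d(f,g,a)+d(g,h_{0},b)+d(f,b,c),
\end{equation*}
which, after substitution, becomes
\begin{equation*}
d(h,1_{z},h)\leq d(f,g,h)+d(g,1_{z},g)+d(f,g,h).
\end{equation*}
By the left identity axiom applied to $h$ and to $g$, the terms $d(h,1_{z},h)$ and $d(g,1_{z},g)$ both vanish, so the inequality reduces to $0\leq 2\,d(f,g,h)$, which is the desired conclusion.

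The only step that requires care is checking that the source and target constraints of the tetrahedral datum are respected (i.e.\ that $f,g,h_{0},a,b,c$ have the correct domains and codomains); the substitution $w=z$ makes this automatic, since $g\in A(y,z)$ is simultaneously an arrow into $w$, and similarly for $h$. No obstacle is expected beyond this bookkeeping. An entirely symmetric argument using right associativity with the setup $x=y$, $f=1_{x}$, $a=h$, $b=g$, $c=h$ would give the same bound, but one application suffices.
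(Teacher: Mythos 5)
Your proof is correct and is essentially identical to the paper's: the paper also applies left associativity to the tetrahedron $(f,g,1_{z};h,g;h)$ and uses the left identity axiom to kill the terms $d(h,1_{z},h)$ and $d(g,1_{z},g)$, yielding $0\leq 2\,d(f,g,h)$. Nothing further is needed.
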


\begin{proof}
For notational simplicity we denote the third map by $a$. Then, Use left
associativity for $(f,g,1_{z};a,g;a)$. It says 
\begin{equation*}
d(a,1_{z},a)\leq d(f,g,a)+d(g,1_{z},g)+d(f,g,a).
\end{equation*}%
Since $d(a,1_{z},a)=0$ and $d(g,1_{z},g)=0$ we get $2d(f,g,a)\geq 0$,
therefore $d(f,g,a)\geq 0$ as claimed.
\end{proof}

\begin{lemma}
Suppose $(X,A,d)$ is an approximate semi-categorical \linebreak\ structure
(resp. categorical structure) and suppose we are given subsets $%
B(x,y)\subset A(x,y)$ (resp. subsets containing $1_{x}$ if $x=y$). Then $%
(X,B,d|_{B})$ is an approximate semi-categorical (resp. categorical)
structure.
\end{lemma}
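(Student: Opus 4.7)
The plan is to verify, one axiom at a time, that each of the defining conditions of an approximate semi-categorical (resp.\ categorical) structure is universally quantified over tuples of arrows in $A$, and therefore survives restriction to the subgraph $B$. Since $d|_{B}$ is literally the restriction of $d$, no new computation of distances is involved: every instance of an axiom for $(X,B,d|_{B})$ is already an instance of the corresponding axiom for $(X,A,d)$.

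Concretely, I would proceed as follows. First, in the semi-categorical case, I would fix an arbitrary tetrahedron $(f,g,h;a,b;c)$ of arrows in $B$, with the source/target pattern
\begin{equation*}
x\overset{f}{\rightarrow} y \overset{g}{\rightarrow} z \overset{h}{\rightarrow} w,\quad x\overset{a}{\rightarrow} z,\quad y\overset{b}{\rightarrow} w,\quad x\overset{c}{\rightarrow} w.
\end{equation*}
Because $B(x,y)\subset A(x,y)$ for all $x,y$, this is also a tetrahedron of arrows in $A$, so the left and right associativity inequalities hold with $d$; but the six values of $d$ appearing in each inequality only involve arrows drawn from $B$, so they coincide with the corresponding values of $d|_{B}$. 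Hence the associativity axioms hold for $(X,B,d|_{B})$.

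In the categorical case, I additionally need to check the identity axioms. Here the hypothesis that $1_{x}\in B(x,x)$ for every $x$ is exactly what is needed: for any $f\in B(x,y)$, the triples $(f,1_{y},f)$ and $(1_{x},f,f)$ are triples in $B$, and the identities $d(f,1_{y},f)=0$ and $d(1_{x},f,f)=0$ are inherited verbatim from $(X,A,d)$. Choosing $1_{x}^{B}:=1_{x}$ as the distinguished identity in $B(x,x)$ then makes $(X,B,d|_{B})$ into an AC-structure.

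There is no genuine obstacle here; the only thing to notice is that the universal quantifiers in the associativity and identity axioms range over ``all tuples of arrows in the ambient graph with the prescribed source/target pattern,'' and that a subset $B\subset A$ yields a smaller collection of such tuples. The identity clause in the statement ($1_{x}\in B(x,x)$ in the categorical version) is precisely what guarantees that the identity axioms are even \emph{formulable} for $B$, after which their validity is automatic.
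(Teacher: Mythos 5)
Your proof is correct and is essentially the paper's own argument (the paper simply states that the conditions for $d|_B$ follow from the same conditions for $d$ on $A$); you have merely spelled out the observation that the axioms are universally quantified over tuples of arrows and hence restrict to any subgraph containing the identities.
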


\begin{proof}
The conditions for $d|_B$ follow from the same conditions for $d$ on $A$.
\end{proof}

\begin{corollary}
Let $(X,C,\circ ,\phi )$ be a pseudo-metrized (resp. metrized) category and
suppose $A(x,y)\subset C(x,y)$ are subsets. Then $(X,A,d|_{A})$ is an
approximate semi categorical structure and if $1_{x}\in A(x,x)$ then we get
an approximate categorical structure (resp. separated AC structure).
\end{corollary}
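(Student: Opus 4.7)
The plan is to obtain this corollary by chaining Proposition \ref{mcac} with the preceding Lemma on subsets. First I would apply Proposition \ref{mcac} to the full pseudo-metrized category $(X,C,\circ ,\phi )$: this yields that the function $d(f,g,h):=\phi (g\circ f,h)$ on $(X,C)$ satisfies both identity axioms and both left and right associativity axioms, so $(X,C,d)$ is already an approximate categorical structure. Restricting to the subgraph $A(x,y)\subset C(x,y)$, the subset Lemma then immediately delivers that $(X,A,d|_{A})$ is an approximate semi-categorical structure, and an approximate categorical structure whenever the identities $1_{x}\in A(x,x)$ are present.

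For the separated half of the claim, I would trace through what separation should mean for the AC structure $(X,A,d|_{A})$, using the induced distance $\mathrm{dist}_{A(x,y)}(f,g):=d(1_{x},f,g)$ introduced in the introduction. By the first identity computation of Proposition \ref{mcac}, one has $d(1_{x},f,g)=\phi (f,g)$, so the induced distance on $A(x,y)$ is nothing other than $\phi $ restricted to the subset $A(x,y)\subset C(x,y)$. If $\phi $ is a metric on $C$, its restriction to any subset is again a metric, so the AC structure on $A$ is separated as well.

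I do not anticipate any real obstacle: this corollary is purely a matter of bundling together Proposition \ref{mcac} and the subset Lemma with a one-line check that separation descends to subsets. The only micro-subtlety is to ensure that in the categorical case the required identities $1_{x}$ still lie in $A(x,x)$, so that the identity axioms are meaningful; this is exactly the hypothesis recorded in the statement.
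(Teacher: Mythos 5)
Your proposal is correct and matches the paper's (implicit) argument exactly: the corollary is stated without proof precisely because it is the immediate combination of Proposition \ref{mcac} (giving the AC structure on all of $(X,C)$ via $d(f,g,h):=\phi(g\circ f,h)$) with the preceding subset lemma. Your additional observation that the induced distance on $A(x,y)$ is just the restriction of $\phi$, so separation descends to subsets, is the right one-line check for the metrized case.
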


An approximate categorical structure is clearly also an approximate
semi-categorical structure.

\begin{question}
Suppose $(X,A,d)$ is an approximate semi-categorical structure. Let $%
A^+(x,y):= A(x,y)$ for $x\neq y$ and $A^+(x,y):= A(x,x)\sqcup \{ 1_x\}$. Is
there a natural way to extend $d$ to $A^+$ to obtain an AC structure?
\end{question}

\section{Metric on the arrow sets}

\label{metarrowsets}

\begin{lemma}
If $x,y\in X$ and $f,g\in A(x,y)$ then 
\begin{equation*}
d(f, 1_y,g) = d(1_x, f ,g). 
\end{equation*}
\end{lemma}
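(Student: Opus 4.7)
The plan is to apply the left and right associativity axioms to a carefully chosen degenerate tetrahedron, where three of the six arrows are identities and the remaining three are filled in using $f$ and $g$ themselves, so that the identity axioms collapse most terms to zero.

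Concretely, I would consider the sequence of objects $x \overset{1_x}{\rightarrow} x \overset{f}{\rightarrow} y \overset{1_y}{\rightarrow} y$, and fill out the tetrahedron $(1_x, f, 1_y; a, b; c)$ by choosing the three diagonal/composite arrows to be $a := f \in A(x,y)$, $b := f \in A(x,y)$, and $c := g \in A(x,y)$. Here the roles of the six slots in the associativity axioms are: the three ``edges'' $1_x : x \to x$, $f : x \to y$, $1_y : y \to y$, together with $a : x \to y$, $b : x \to y$, $c : x \to y$.

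With this substitution the four distances appearing on each side of the two associativity inequalities simplify nicely using the identity axioms: $d(1_x, f, f) = 0$ and $d(f, 1_y, f) = 0$. The remaining two terms are exactly $d(1_x, f, g)$ and $d(f, 1_y, g)$. Left associativity $d(a,h,c) \leq d(f,g,a) + d(g,h,b) + d(f,b,c)$ becomes
\begin{equation*}
d(f, 1_y, g) \leq 0 + 0 + d(1_x, f, g),
\end{equation*}
while right associativity $d(f,b,c) \leq d(f,g,a) + d(g,h,b) + d(a,h,c)$ becomes
\begin{equation*}
d(1_x, f, g) \leq 0 + 0 + d(f, 1_y, g).
\end{equation*}
Combining the two inequalities yields the claimed equality.

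There is no real obstacle here; the only mildly delicate point is keeping track of which symbols in the associativity axioms correspond to which elements in the degenerate picture, since the names $f$ and $g$ appearing in the lemma statement get reused in the substitution. Once the bookkeeping is set up as above, the identity axioms do all the work and the two associativity axioms deliver the two inequalities symmetrically.
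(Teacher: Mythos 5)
Your proof is correct and coincides with the paper's own argument: both apply left and right associativity to the same degenerate tetrahedron $(1_x, f, 1_y; f, f; g)$ and use the identity axioms to kill the terms $d(1_x,f,f)$ and $d(f,1_y,f)$. Nothing to add.
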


\begin{proof}
Use the left associativity axiom for $(1_{x},f,1_{y};f,f;g)$ which says 
\begin{equation*}
d(f,1_{y},g)\leq d(1_{x},f,f)+d(f,1_{y},f)+d(1_{x},f,g)=d(1_{x},f,g).
\end{equation*}%
On the other hand, the right associativity axiom for $(1_{x},f,1_{y};f,f;g)$
gives 
\begin{equation*}
d(1_{x},f,g)\leq d(1_{x},f,f)+d(f,1_{y},f)+d(f,1_{y},g)=d(f,1_{y},g).
\end{equation*}
\end{proof}

By the preceding lemma, we can define a distance on $A(x,y)$ as follows, for 
$f,g\in A(x,y)$ put 
\begin{equation*}
\phi (f,g):=d(1_{x},f,g).
\end{equation*}

Note that for any $x$ we have 
\begin{equation*}
d(1_{x},1_{x},1_{x})=0\text{.}
\end{equation*}

\begin{lemma}
This distance is reflexive: 
\begin{equation*}
\phi (f,f)= 0, 
\end{equation*}
symmetric: 
\begin{equation*}
\phi (f,g)= \phi (g,f), 
\end{equation*}
and satisfies the triangle inequality: 
\begin{equation*}
\phi (f,g) \leq \phi (f,h)+\phi (h,g). 
\end{equation*}
\end{lemma}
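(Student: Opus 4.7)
The plan is to verify the three properties in turn for $\phi(f,g):=d(1_x,f,g)$, using the preceding lemma (which identifies $d(1_x,f,g)=d(f,1_y,g)$) together with the identity and associativity axioms applied to carefully chosen degenerate tetrahedra. Reflexivity is immediate: $\phi(f,f)=d(1_x,f,f)=0$ is the right identity axiom.

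For symmetry, I would first use the preceding lemma to reduce the goal $d(1_x,f,g)=d(1_x,g,f)$ to the equivalent equality $d(f,1_y,g)=d(g,1_y,f)$. To obtain $d(f,1_y,g)\leq d(g,1_y,f)$, apply left associativity to the tetrahedron $(g,1_y,1_y;\,f,1_y;\,g)$, whose underlying configuration is $x\overset{g}{\rightarrow}y\overset{1_y}{\rightarrow}y\overset{1_y}{\rightarrow}y$. This yields
$$d(f,1_y,g)\leq d(g,1_y,f)+d(1_y,1_y,1_y)+d(g,1_y,g),$$
and the last two terms vanish by specializing the identity axioms (the first by right identity with $f=1_y$, the second by left identity). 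Swapping $f$ and $g$ in the argument gives the reverse inequality, hence equality.

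For the triangle inequality, I would apply right associativity to the tetrahedron $(1_x,f,1_y;\,h,f;\,g)$, whose underlying configuration is $x\overset{1_x}{\rightarrow}x\overset{f}{\rightarrow}y\overset{1_y}{\rightarrow}y$ with diagonals $a=h$, $b=f$, $c=g$. The right associativity inequality becomes
$$d(1_x,f,g)\leq d(1_x,f,h)+d(f,1_y,f)+d(h,1_y,g).$$
The middle term is $0$ by the left identity axiom, the first term is $\phi(f,h)$ by definition, and the third term equals $d(1_x,h,g)=\phi(h,g)$ by the preceding lemma. This gives exactly $\phi(f,g)\leq\phi(f,h)+\phi(h,g)$.

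No step is a real obstacle; the entire proof is a matter of selecting degenerate tetrahedra so that the unwanted terms in the associativity inequality collapse to $0$ via the identity axioms. The mildest subtlety is that one must translate freely between the three equivalent expressions $d(1_x,f,g)$, $d(f,1_y,g)$, and $\phi(f,g)$ supplied by the preceding lemma; note that the triangle inequality proof above has been arranged so that it does \emph{not} require symmetry as an input, so the three parts may be established independently.
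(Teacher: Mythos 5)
Your proof is correct and follows essentially the same strategy as the paper's: degenerate tetrahedra chosen so that the identity axioms annihilate the unwanted terms, with your symmetry step using the identical tetrahedron $(g,1_y,1_y;\,f,1_y;\,g)$ and the identical resulting inequality. The only (harmless) divergence is in the triangle inequality, where the paper applies left associativity to obtain $\phi(f,g)\leq\phi(h,f)+\phi(h,g)$ and then implicitly invokes the symmetry just proved, whereas your right-associativity tetrahedron $(1_x,f,1_y;\,h,f;\,g)$ yields $\phi(f,g)\leq\phi(f,h)+\phi(h,g)$ directly, so the three parts become logically independent --- a minor but genuine tidying of the argument.
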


\begin{proof}
By definition 
\begin{equation*}
\phi (f,f)=d(f,1_{y},f)=0
\end{equation*}
by the left identity axiom. Using left associativity we have 
\begin{eqnarray*}
\phi (f,g) &=&d(f,1_{y},g) \\
&\leq &d(g,1_{y},f)+d(1_{y},1_{y},1_{y})+d(g,1_{y},g)\text{,}
\end{eqnarray*}%
so 
\begin{equation*}
\phi (f,g)\leq d(g,1_{y},f)=\phi (g,f),
\end{equation*}%
which by symmetry gives $\phi (f,g)=\phi (g,f)$. For the triangle inequality
suppose $f,g,h\in A(x,y)$, then applying left associativity we get 
\begin{eqnarray*}
\phi (f,g) &=&d(f,1_{y},g) \\
&\leq &d(h,1_{y},f)+d(1_{y},1_{y},1_{y})+d(h,1_{y},g),
\end{eqnarray*}%
therefore 
\begin{equation*}
\phi (f,g)\leq \phi (h,f)+\phi (h,g).
\end{equation*}
\end{proof}

\begin{lemma}
\label{phileft} Given $f,f^{\prime }\in A(x,y)$, $h\in A(y,z)$ and $c\in
A(x,z)$ we have 
\begin{equation*}
d(f,h,c)\leq d(f^{\prime },h,c) + \phi (f,f^{\prime }) . 
\end{equation*}
\end{lemma}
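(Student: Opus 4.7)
The plan is to apply the left associativity axiom to a carefully chosen degenerate tetrahedron in which the first edge is an identity. Specifically, consider the sequence of objects and arrows
\begin{equation*}
x \overset{1_x}{\rightarrow} x \overset{f'}{\rightarrow} y \overset{h}{\rightarrow} z,
\end{equation*}
together with the diagonal arrows $f \in A(x,y)$, $c \in A(x,z)$, $c \in A(x,z)$. That is, in the notation of the axiom I take $(1_x, f', h; f, c; c)$.

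The left associativity axiom then yields
\begin{equation*}
d(f, h, c) \leq d(1_x, f', f) + d(f', h, c) + d(1_x, c, c).
\end{equation*}
Now the right identity axiom gives $d(1_x, c, c) = 0$, and the definition of $\phi$ combined with the symmetry already established (Lemma above) gives $d(1_x, f', f) = \phi(f', f) = \phi(f, f')$. Substituting these into the displayed inequality yields exactly $d(f,h,c) \leq d(f',h,c) + \phi(f,f')$.

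There is no serious obstacle here; the only real content is guessing the right tetrahedron to feed into the axiom. The trick is to recognize that we want the associativity bound to introduce a term of the form $d(1_x, f', f)$ (so that $\phi(f,f')$ appears), while keeping the remaining two auxiliary terms either equal to $d(f',h,c)$ outright or else automatically zero thanks to the identity axioms. Choosing $b = c$ and $c' = c$ achieves this, collapsing the $d(1_x, b, c')$ term to zero.
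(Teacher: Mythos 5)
Your proof is correct. It differs from the paper's in the choice of degenerate tetrahedron: the paper applies \emph{right} associativity to $(f,1_y,h;f',h;c)$, inserting the identity at the middle object $y$, which produces the error term directly as $d(f,1_y,f')=\phi(f,f')$ (via the identification $d(f,1_y,g)=d(1_x,f,g)$ from the first lemma of the section) and kills the remaining term via $d(1_y,h,h)=0$. You instead apply \emph{left} associativity to $(1_x,f',h;f,c;c)$, inserting the identity at the source object, which produces the error term as $d(1_x,f',f)=\phi(f',f)$ and therefore additionally requires the symmetry $\phi(f',f)=\phi(f,f')$, while the term $d(1_x,c,c)$ vanishes by the right identity axiom. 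Since both the symmetry of $\phi$ and the identification used by the paper are established earlier in the same section, neither route is circular, and the two arguments are of essentially equal length; yours is a legitimate alternative rather than a gap.
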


\begin{proof}
Applying right associativity with $(f,1_{y},h;f^{\prime },h;c)$, that is $%
g:=1_{y}$, $a:=f^{\prime }$ and $b:=h$ we get 
\begin{equation*}
d(f,b,c)\leq d(f,g,a)+d(g,h,b)+d(a,h,c),
\end{equation*}%
which in our case says 
\begin{equation*}
d(f,h,c)\leq d(f,1_{y},f^{\prime })+d(1_{y},h,h)+d(f^{\prime },h,c).
\end{equation*}%
Since $\phi (f,f^{\prime })=d(f,1_{y},f^{\prime })$ and $d(1_{y},h,h)=0$ we
obtain the desired statement.
\end{proof}

Similarly,

\begin{lemma}
\label{phiright} Given $f\in A(x,y)$, $h,h^{\prime }\in A(y,z)$ and $c\in
A(x,z)$ we have 
\begin{equation*}
d(f,h,c)\leq d(f,h^{\prime },c) + \phi (h,h^{\prime }) . 
\end{equation*}
\end{lemma}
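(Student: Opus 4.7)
The plan is to mirror the proof of Lemma \ref{phileft} almost verbatim, but with the roles of left and right associativity swapped. In Lemma \ref{phileft}, the right associativity axiom was applied to a tetrahedron whose middle arrow $G$ is $1_y$, so that inserting $1_y$ between $f$ and $f'$ produced the comparison term $\phi(f,f')$ and left $d(f',h,c)$ on the right-hand side. Here I want the inserted identity to sit between $h$ and $h'$ rather than between two competing maps on $A(x,y)$, so I will use left associativity instead.

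Concretely, I would apply the left associativity axiom to the tetrahedron
\begin{equation*}
(F,G,H;A,B;C) := (f, 1_y, h; f, h'; c),
\end{equation*}
viewing the third object $z'$ of the tetrahedron template as equal to $y$, so that $G = 1_y : y \to y$ is legitimate, while $H = h$ runs from $y$ to $z$ (the fourth object $w'$). All the arrows $A = f$, $B = h'$ and $C = c$ then have the correct source and target. The left associativity axiom becomes
\begin{equation*}
d(f, h, c) \leq d(f, 1_y, f) + d(1_y, h, h') + d(f, h', c).
\end{equation*}

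The first term on the right vanishes by the left identity axiom, and the second term is exactly $\phi(h,h')$ by the definition of $\phi$ together with the preceding lemma identifying $d(1_y,h,h')$ with $\phi(h,h')$. This yields the claim. There is essentially no obstacle: the only decision to make is which tetrahedron to feed into which associativity axiom, and the symmetric parallel with Lemma \ref{phileft} dictates the choice. No appeal to any axiom beyond left associativity, the left identity axiom, and the definition of $\phi$ is needed.
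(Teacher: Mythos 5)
Your proposal is correct and coincides with the paper's own proof: both apply left associativity to the tetrahedron $(f,1_{y},h;f,h^{\prime};c)$ to obtain $d(f,h,c)\leq d(f,1_{y},f)+d(1_{y},h,h^{\prime})+d(f,h^{\prime},c)$, then use the left identity axiom and the definition of $\phi$. No differences worth noting.
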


\begin{proof}
Applying left associativity with $(f,1_{y},h;f,h^{\prime };c)$, that is $%
g:=1_{y}$, $a:=f$ and $b:=h^{\prime }$ we obtain 
\begin{equation*}
d(a,h,c)\leq d(f,g,a)+d(g,h,b)+d(f,b,c),
\end{equation*}%
which in our case says 
\begin{equation*}
d(f,h,c)\leq d(f,1_{y},f)+d(1_{y},h,h^{\prime })+d(f,h^{\prime },c).
\end{equation*}%
As 
\begin{equation*}
d(f,1_{y},f)=0\text{ and }d(1_{y},h,h^{\prime })=\phi (h,h^{\prime })\text{,}
\end{equation*}%
we get the desired statement.
\end{proof}

For the third edge,

\begin{lemma}
\label{phicomp} Given $f\in A(x,y)$, $g\in A(y,z)$ and $c\in A(x,z)$ we have 
\begin{equation*}
d(f,g,c)\leq d(f,g,c^{\prime }) + \phi (c,c^{\prime }) . 
\end{equation*}
\end{lemma}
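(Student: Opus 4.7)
The plan is to mimic the strategy of Lemmas \ref{phileft} and \ref{phiright}: apply one of the associativity axioms to a carefully chosen ``degenerate'' tetrahedron in which one of the four edges of the rim is an identity arrow. Since the edge we wish to compare is the third one $c$, which lives in $A(x,z)$, the natural thing is to collapse the top vertex $w$ onto $z$ by taking $h:=1_z$.

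Concretely, I would apply the right associativity axiom to the tetrahedron $(f,g,1_z;\,c',g;\,c)$, i.e.\ with the data
\begin{equation*}
x\overset{f}{\rightarrow}y\overset{g}{\rightarrow}z\overset{1_z}{\rightarrow}z,\qquad x\overset{c'}{\rightarrow}z,\ \ y\overset{g}{\rightarrow}z,\ \ x\overset{c}{\rightarrow}z.
\end{equation*}
One checks that all typing constraints are satisfied ($a:=c'\in A(x,z)$, $b:=g\in A(y,z)$, $c\in A(x,z)$). The right associativity inequality then reads
\begin{equation*}
d(f,g,c)\ \leq\ d(f,g,c')+d(g,1_z,g)+d(c',1_z,c).
\end{equation*}

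Now the two ``extra'' terms collapse: $d(g,1_z,g)=0$ by the left identity axiom, and $d(c',1_z,c)=\phi(c',c)=\phi(c,c')$ by the definition of $\phi$ applied in the form of the preceding lemma (which identified $d(-,1_y,-)$ with $d(1_x,-,-)$) together with the symmetry of $\phi$ established earlier. Substituting these into the displayed inequality yields exactly
\begin{equation*}
d(f,g,c)\ \leq\ d(f,g,c')+\phi(c,c'),
\end{equation*}
as desired.

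There is essentially no obstacle here: the only non-obvious step is guessing the right ``degenerate'' tetrahedron, and once one notices that the third edge of the axiom $d(f,b,c)$ in the right associativity involves exactly the comparison edge $c$ we want on the left-hand side, the choice $h:=1_z$, $b:=g$, $a:=c'$ is forced. The proof is therefore a one-line application of right associativity, paralleling the structure of Lemmas \ref{phileft} and \ref{phiright} but using $1_z$ on the $z\to w$ edge rather than $1_y$ on the $y\to z$ edge.
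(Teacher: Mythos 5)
Your proof is correct and is essentially identical to the paper's: the same tetrahedron $(f,g,1_z;\,c',g;\,c)$ is fed into right associativity, and the extra terms are killed in the same way using $d(g,1_z,g)=0$ and $d(c',1_z,c)=\phi(c,c')$. No differences worth noting.
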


\begin{proof}
Applying right associativity with $(f,g,1_{z};c^{\prime },g;c)$, that is $%
h:=1_{z}$, $a:=c^{\prime }$ and $b:=g$ we have 
\begin{equation*}
d(f,b,c)\leq d(f,g,a)+d(g,h,b)+d(a,h,c),
\end{equation*}%
which in our case says 
\begin{equation*}
d(f,g,c)\leq d(f,g,c^{\prime })+d(g,1_{z},g)+d(c^{\prime },1_{z},c).
\end{equation*}%
Since%
\begin{equation*}
\phi (c,c^{\prime })=d(c^{\prime },1_{z},c)\text{ and }d(g,1_{z},g)=0\text{,}
\end{equation*}
we obtain the desired statement.
\end{proof}

Putting these together we get:

\begin{corollary}
\label{dphicorollary} Given $f,f^{\prime }\in A(x,y)$, $g,g^{\prime }\in
A(y,z)$ and $h,h^{\prime }\in A(x,z)$ we have 
\begin{equation*}
d(f,g,h)\leq d(f^{\prime },g^{\prime },h^{\prime }) + \phi (f,f^{\prime }) +
\phi (g,g^{\prime }) + \phi (h,h^{\prime }) 
\end{equation*}
\end{corollary}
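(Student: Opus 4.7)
The plan is to chain together the three preceding lemmas (\ref{phileft}, \ref{phiright}, \ref{phicomp}), changing one argument of $d$ at a time, and then sum the resulting inequalities.

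First I would apply Lemma \ref{phileft} with the pair $(f,f')$ in the first slot, keeping $g$ and $h$ fixed, to obtain
\begin{equation*}
d(f,g,h) \leq d(f',g,h) + \phi(f,f').
\end{equation*}
Next I would invoke Lemma \ref{phiright} to modify the second slot, holding $f'$ and $h$ fixed:
\begin{equation*}
d(f',g,h) \leq d(f',g',h) + \phi(g,g').
\end{equation*}
Finally Lemma \ref{phicomp} handles the third slot with $f'$ and $g'$ fixed:
\begin{equation*}
d(f',g',h) \leq d(f',g',h') + \phi(h,h').
\end{equation*}
Composing these three inequalities telescopically yields the claim.

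There is no real obstacle here: the three slot-by-slot estimates have already been established using the left and right associativity axioms, and the corollary is essentially a bookkeeping step that records their cumulative effect. The only mild point worth noting is that each intermediate expression such as $d(f',g,h)$ is well defined since $f' \in A(x,y)$, $g \in A(y,z)$ and $h \in A(x,z)$ have the correct source and target objects, so each application of the lemmas is legitimate. After writing out the chain, it remains only to add and conclude.
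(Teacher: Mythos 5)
Your proof is correct and follows exactly the paper's argument: chain Lemmas \ref{phileft}, \ref{phiright} and \ref{phicomp} to replace $f$, $g$ and $h$ one slot at a time, accumulating the three $\phi$ terms. Nothing further is needed.
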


\begin{proof}
By the above lemmas we get successively 
\begin{eqnarray*}
d(f,g,h) &\leq &d(f^{\prime },g,h)+\phi (f,f^{\prime }) \\
&\leq &d(f^{\prime },g^{\prime },h)+\phi (f,f^{\prime })+\phi (g,g^{\prime })
\end{eqnarray*}%
\begin{equation*}
\leq d(f^{\prime },g^{\prime },h^{\prime })+\phi (f,f^{\prime })+\phi
(g,g^{\prime })+\phi (h,h^{\prime }).
\end{equation*}
\end{proof}

\begin{definition}
\label{separated} We say that an AC structure is \emph{separated} if%
\begin{equation*}
\phi (f,f^{\prime })=0\Rightarrow f=f^{\prime }\text{.}
\end{equation*}
Equivalently, each $(A(x,y),\phi )$ is a metric space rather than a
pseudometric space.
\end{definition}

The separation property may be ensured by a quotient construction. Given an
AC structure in general, define the relation that%
\begin{equation*}
f\sim f^{\prime }\text{ if \ }\phi (f,f^{\prime })=0\text{.}
\end{equation*}

\begin{lemma}
This is an equivalence relation on $A(x,y)$. Let $\tilde{A}(x,y):=
A(x,y)/\sim$. The distance function $d(f,g,h)$ passes to the quotient to be
a function of $f\in \tilde{A}(x,y)$, $g\in \tilde{A}(y,z)$ and $h\in A(x,z)$%
. Then $(X,\tilde{A}, d)$ is a separated AC structure.
\end{lemma}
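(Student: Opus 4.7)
The plan is to verify three things in turn: that $\sim$ is indeed an equivalence relation, that $d$ descends to a well-defined function on equivalence classes, and finally that the axioms of a (separated) AC structure transfer to the quotient.

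First I would check that $\sim$ is an equivalence relation on each $A(x,y)$. This is immediate from the fact that $\phi$ is a pseudometric, already established in Section~\ref{metarrowsets}: reflexivity follows from $\phi(f,f)=0$, symmetry from $\phi(f,f')=\phi(f',f)$, and transitivity from the triangle inequality, since $\phi(f,f'')\leq \phi(f,f')+\phi(f',f'')=0$ forces $\phi(f,f'')=0$.

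Next I would show that $d$ descends to $\tilde A$. The key ingredient is Corollary~\ref{dphicorollary}, which gives
\begin{equation*}
d(f,g,h)\leq d(f',g',h')+\phi(f,f')+\phi(g,g')+\phi(h,h').
\end{equation*}
If $f\sim f'$, $g\sim g'$, $h\sim h'$ then the three $\phi$ terms vanish, so $d(f,g,h)\leq d(f',g',h')$, and by symmetry the reverse inequality also holds. Hence $d$ depends only on the classes $[f],[g],[h]$, giving a well-defined function on $\tilde A$. In particular, applying this with $g=g'=1_y$ and $h=f$, $h'=f'$ shows that $\phi$ itself descends to a well-defined metric $\tilde\phi$ on $\tilde A(x,y)$.

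Finally I would promote the AC axioms to the quotient. The identities $1_x$ project to elements $[1_x]\in \tilde A(x,x)$, and the identity axioms $d([f],[1_y],[f])=d(f,1_y,f)=0$ and $d([1_x],[f],[f])=0$ hold simply by evaluating $d$ on representatives. The left and right associativity axioms for a tetrahedron of classes are likewise obtained by picking any representatives and using the corresponding inequalities for $(X,A,d)$. Separation is the only slightly more delicate point, but it is essentially tautological: if $\tilde\phi([f],[f'])=0$ then $\phi(f,f')=0$, so $f\sim f'$ and $[f]=[f']$ by construction. I expect no real obstacle here; the main thing to be careful about is the well-definedness in part two, which is precisely what Corollary~\ref{dphicorollary} was set up to deliver.
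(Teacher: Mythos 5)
Your proposal is correct and follows exactly the paper's (much terser) argument: equivalence relation from the pseudometric properties of $\phi$, well-definedness of $d$ on the quotient from Corollary \ref{dphicorollary}, and the AC axioms plus separation transferring by choice of representatives. No discrepancies to note.
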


\begin{proof}
It is an equivalence relation by the triangle inequality of $\phi$. The
above corollary says that $d$ passes to the quotient. The axioms hold to get
an AC structure.
\end{proof}

The lemma shows that an approximate categorical structure can always be
replaced by one which satisfies the separation property. We will generally
assume that this has been done.

\begin{lemma}
Suppose $(X,A,d)$ is a separated AC structure. The function $d$ is
continuous on the topologies associated to the metric spaces $A(\cdot ,
\cdot )$. More precisely, for any $x,y,z\in X$, 
\begin{equation*}
d: A(x,y)\times A(y,z)\times A(x,y)\rightarrow {\mathbb{R}} 
\end{equation*}
is a continuous function of its three variables.
\end{lemma}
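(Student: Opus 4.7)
The plan is to deduce continuity directly from Corollary \ref{dphicorollary}, which already packages together the three one-sided perturbation lemmas (\ref{phileft}, \ref{phiright}, \ref{phicomp}). That corollary gives a one-sided inequality
\[
d(f,g,h) \leq d(f',g',h') + \phi(f,f') + \phi(g,g') + \phi(h,h'),
\]
and the roles of $(f,g,h)$ and $(f',g',h')$ are symmetric in the hypotheses, so applying the same inequality with the two triples swapped yields
\[
d(f',g',h') \leq d(f,g,h) + \phi(f,f') + \phi(g,g') + \phi(h,h').
\]

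Combining these gives the two-sided Lipschitz estimate
\[
|d(f,g,h) - d(f',g',h')| \leq \phi(f,f') + \phi(g,g') + \phi(h,h'),
\]
which shows that $d$ is $1$-Lipschitz from the product metric space $A(x,y)\times A(y,z)\times A(x,z)$ (equipped with the sum metric) to $\mathbb{R}$. Since the product topology on a finite product of metric spaces agrees with the topology induced by the sum metric, this Lipschitz bound implies continuity in the product topology, which is the same as joint continuity in the three variables.

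I do not expect any real obstacle here: once Corollary \ref{dphicorollary} is in hand, everything is formal, and the only thing to check is the symmetry observation that lets us pass from a one-sided to a two-sided inequality. The separation hypothesis is used only insofar as it guarantees that each $(A(\cdot,\cdot),\phi)$ is a genuine metric space rather than a pseudometric space, so that the notion of ``topology associated to $\phi$'' is literally a metric topology; the proof of the Lipschitz estimate itself does not actually require separation.
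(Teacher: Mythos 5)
Your proof is correct and follows the same route as the paper, which simply cites Corollary \ref{dphicorollary}; you have merely spelled out the symmetrization step that turns the one-sided inequality into a two-sided Lipschitz bound. Nothing further is needed.
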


\begin{proof}
This also follows from Corollary \ref{dphicorollary}.
\end{proof}

\section{The $\protect\epsilon$-categoric condition}

We say that $(X,A,d)$ is \emph{$\epsilon $-categoric} if for any $f\in A(x,y)
$ and $g\in A(y,z)$ there exists $h\in A(x,z)$ such that 
\begin{equation*}
d(f,g,h)\leq \epsilon \text{.}
\end{equation*}

\begin{lemma}
Given $f\in A(x,y)$, $g\in A(y,z)$ and $a,a^{\prime }\in A(x,z)$ we have 
\begin{equation*}
\phi (a,a^{\prime }) \leq d(f,g,a) + d(f,g,a^{\prime }). 
\end{equation*}
\end{lemma}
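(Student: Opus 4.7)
The plan is to apply the left-associativity axiom to a suitably degenerate tetrahedron in which the fourth vertex equals the third. Specifically, I will take $w := z$, $h := 1_z$, and for the middle-level arrows pick $b := g$ and $c := a'$. The corresponding configuration $(f,g,1_z;a,g;a')$ indeed forms a tetrahedron since $f\in A(x,y)$, $g\in A(y,z)$, $1_z\in A(z,z)$, $a\in A(x,z)$, and $a'\in A(x,z) = A(x,w)$.

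The left-associativity axiom applied to this tetrahedron reads
\begin{equation*}
d(a,1_z,a') \leq d(f,g,a) + d(g,1_z,g) + d(f,g,a').
\end{equation*}
Now two simplifications occur: by the left identity axiom $d(g,1_z,g) = 0$, and by the first lemma of Section \ref{metarrowsets} together with the definition of $\phi$, we have $d(a,1_z,a') = d(1_x,a,a') = \phi(a,a')$. Substituting these identities yields
\begin{equation*}
\phi(a,a') \leq d(f,g,a) + d(f,g,a'),
\end{equation*}
which is the claimed inequality.

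There is no real obstacle here; the only choice to make is which tetrahedron to feed into the associativity axiom, and the natural instinct is to collapse $w$ to $z$ via $h=1_z$ (so the identity axiom kills one term) and to pick $b=g$ (so another term vanishes). The inequality is then immediate from left associativity together with the earlier observation that $\phi$ can equally well be computed as $d(\cdot,1_z,\cdot)$ or $d(1_x,\cdot,\cdot)$. Right associativity would yield an inequality in the opposite direction that is not what is wanted, so it is important to use the left-associativity axiom here.
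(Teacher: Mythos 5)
Your proof is correct and is essentially identical to the paper's own argument: the same tetrahedron $(f,g,1_z;a,g;a')$ fed into left associativity, with $d(g,1_z,g)=0$ and $d(a,1_z,a')=\phi(a,a')$ finishing the job. Nothing further is needed.
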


\begin{proof}
Applying left associativity with $(f,g,1_{z};a,g;a^{\prime })$, that is  $%
h:=1_{z}$, $b:=g$ and $c:=a^{\prime }$ we get 
\begin{equation*}
d(a,h,c)\leq d(f,g,a)+d(g,h,b)+d(f,b,c),
\end{equation*}%
which in our case says 
\begin{equation*}
d(a,1_{z},a^{\prime })\leq d(f,g,a)+d(g,1_{z},g)+d(f,g,a^{\prime }).
\end{equation*}%
As $d(g,1_{z},g)=0$ and $d(a,1_{z},a^{\prime })=\phi (a,a^{\prime })$ we
obtain the desired statement.
\end{proof}

\begin{corollary}
\label{phizero} If 
\begin{equation*}
d(f,g,h)=d(f,g,h^{\prime })=0\text{,}
\end{equation*}
then $\phi (h,h^{\prime })=0$. In particular, if the separation property
(Definition \ref{separated}) is satisfied then it implies that $h=h^{\prime }
$.
\end{corollary}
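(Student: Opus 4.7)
The plan is to deduce the corollary as a direct application of the preceding lemma, which bounds $\phi(a,a')$ by $d(f,g,a) + d(f,g,a')$ for any pair of candidate ``compositions'' $a,a' \in A(x,z)$.

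First I would specialize the preceding lemma by taking $a := h$ and $a' := h'$, both in $A(x,z)$, with $f \in A(x,y)$ and $g \in A(y,z)$ as given. The lemma then yields the inequality
\begin{equation*}
\phi(h,h') \leq d(f,g,h) + d(f,g,h').
\end{equation*}
Substituting the hypothesis $d(f,g,h) = d(f,g,h') = 0$ gives $\phi(h,h') \leq 0$. Combined with the fact that $\phi$ is a pseudometric and hence non-negative (which follows from the reflexivity, symmetry, and triangle inequality established earlier in the section, or alternatively from the non-negativity lemma for $d$ proved in Section~\ref{approximatecategories}), this forces $\phi(h,h') = 0$, proving the first assertion.

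For the second assertion, I would simply invoke Definition \ref{separated}: under the separation hypothesis, $\phi(h,h') = 0$ implies $h = h'$ by definition.

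There is no real obstacle here; the statement is essentially a restatement of the preceding lemma with both distance values set to zero, and the whole argument fits in a couple of lines.
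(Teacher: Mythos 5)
Your proof is correct and is exactly the argument the paper intends: the corollary is stated immediately after the lemma bounding $\phi(a,a')$ by $d(f,g,a)+d(f,g,a')$ precisely because it follows by setting both terms to zero, using non-negativity of $\phi$, and then invoking the separation property. The paper omits the proof as immediate; your write-up supplies the same two-line deduction.
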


\section{Amplitude}

We consider now a new function for any $x,y\in X$ denoted 
\begin{equation*}
\alpha :A(x,y)\rightarrow {\mathbb{R}}.
\end{equation*}%
We would like to consider $\alpha $ as analogous to the metric function
considered by Weiss \cite{Weiss}, that is $\alpha (f)$ is supposed to
represent the \textquotedblleft length\textquotedblright\ of $f$.

We ask first that $\alpha$ satisfy the \emph{reflexivity axiom} $\alpha
(1_x)=0$.

Recall that if $(X,d)$ is a $2$-metric space then the distance function $%
\varphi (x,y)$ satisfied 
\begin{equation*}
\varphi (x,z)\leq \varphi (x,y)+\varphi (y,z)+d(x,y,z).
\end{equation*}%
With this motivation, we ask that $\alpha $ satisfy the \emph{triangle
inequality}: for any $x,y,z\in X$ and $f\in A(x,y)$, $g\in A(y,z)$ and $h\in
A(x,z)$, we should have 
\begin{equation*}
\alpha (h)\leq \alpha (f)+\alpha (g)+d(f,g,h).
\end{equation*}%
Recall furthermore that Weiss imposed an additional axiom for his metric
function, namely (in his notations \cite{Weiss} that 
\begin{equation*}
|\varphi (u)-\varphi (v)|\leq \varphi (v\circ u).
\end{equation*}%
Transposed into our approximately categorical situation, we therefore add
the \emph{permuted triangle inequalities}: for any $x,y,z\in X$ and $f\in
A(x,y)$, $g\in A(y,z)$ and $h\in A(x,z)$, we should have 
\begin{equation*}
\alpha (f)\leq \alpha (g)+\alpha (h)+d(f,g,h)
\end{equation*}%
and 
\begin{equation*}
\alpha (g)\leq \alpha (f)+\alpha (h)+d(f,g,h)\text{.}
\end{equation*}

\begin{lemma}
Under the above axioms, we have $\alpha (f)\geq 0$ for all $f\in A(x,y)$.
\end{lemma}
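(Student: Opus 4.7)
The plan is to reduce the claim to a direct consequence of one of the permuted triangle inequalities applied at an identity, exactly in the same spirit as deriving non-negativity of a metric from its symmetric triangle inequality.

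Fix $f\in A(x,y)$. I would apply the permuted triangle inequality
\[
\alpha(f') \leq \alpha(g') + \alpha(h') + d(f',g',h')
\]
with the substitution $f' := 1_x \in A(x,x)$, $g' := f \in A(x,y)$, $h' := f \in A(x,y)$, which is a legal triple since the source/target types match $x \to x \to y$ and $x\to y$. By the right identity axiom we have $d(1_x,f,f)=0$, and by the reflexivity axiom $\alpha(1_x)=0$. Plugging these in, the inequality collapses to
\[
0 \leq 2\alpha(f),
\]
so $\alpha(f)\geq 0$.

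There is essentially no obstacle here: the argument uses only the reflexivity axiom, the right identity axiom for $d$, and the permuted triangle inequality in the direction that puts $\alpha(f')$ on the left. One could alternatively use the other permuted inequality $\alpha(g')\leq \alpha(f')+\alpha(h')+d(f',g',h')$ with $g':=1_y$, $f':=f$, $h':=f$ and the left identity axiom $d(f,1_y,f)=0$, yielding the same conclusion.
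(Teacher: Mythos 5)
Your proof is correct and essentially the same as the paper's: both apply a permuted triangle inequality to a triple containing an identity arrow, use the corresponding identity axiom to kill the $d$-term and reflexivity to kill $\alpha(1)$, and conclude $0\leq 2\alpha(f)$. Your alternative (with $g':=1_y$ and the left identity axiom) is in fact exactly the substitution the paper uses.
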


\begin{proof}
Let $g:=1_{y}$ and $h:=f$, then 
\begin{equation*}
d(f,g,h)=d(f,1_{y},f)=0
\end{equation*}
by the left identity property of $d$, so 
\begin{equation*}
0=\alpha (g)\leq \alpha (f)+\alpha (h)+d(f,g,h)=\alpha (f).
\end{equation*}
\end{proof}

\begin{lemma}
Suppose $f,f^{\prime }\in A(x,y)$. Then 
\begin{equation*}
\alpha (f^{\prime })\leq \alpha (f)+\phi (f,f^{\prime }),
\end{equation*}%
implying that 
\begin{equation*}
|\alpha (f)-\alpha (f^{\prime })|\leq \phi (f,f^{\prime }).
\end{equation*}
In particular, $\alpha $ is continuous.
\end{lemma}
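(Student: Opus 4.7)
The plan is to derive this directly from the triangle inequality for $\alpha$ by specializing the three arrows so that $d$ reduces to $\phi$.

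Specifically, I would instantiate the triangle inequality on the triple $(1_{x}, f, f')$, interpreted as arrows in $A(x,x)$, $A(x,y)$, $A(x,y)$ respectively. This is a legitimate configuration since the first has source and target $x$, and the next two start at $x$ and end at $y$. The triangle inequality
\begin{equation*}
\alpha(h) \leq \alpha(f_{0}) + \alpha(g_{0}) + d(f_{0},g_{0},h)
\end{equation*}
applied to $f_{0} := 1_{x}$, $g_{0} := f$, $h := f'$ yields
\begin{equation*}
\alpha(f') \leq \alpha(1_{x}) + \alpha(f) + d(1_{x}, f, f').
\end{equation*}
Now use the reflexivity axiom $\alpha(1_{x}) = 0$ and the definition $\phi(f,f') := d(1_{x}, f, f')$ to obtain $\alpha(f') \leq \alpha(f) + \phi(f,f')$, which is the first claimed inequality.

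Swapping the roles of $f$ and $f'$ gives $\alpha(f) \leq \alpha(f') + \phi(f', f)$, and since $\phi$ is symmetric (as established earlier), this is $\alpha(f) \leq \alpha(f') + \phi(f,f')$. Combining both inequalities yields $|\alpha(f) - \alpha(f')| \leq \phi(f,f')$. Continuity of $\alpha$ on the pseudo-metric space $(A(x,y), \phi)$ then follows immediately, since $\alpha$ is $1$-Lipschitz with respect to $\phi$.

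There is no real obstacle here; the argument is a one-line specialization of the triangle axiom for $\alpha$, with the only substantive point being the correct choice of triple so that $d$ collapses to $\phi$. The symmetry of $\phi$ used for the absolute value version has already been established in Section~\ref{metarrowsets}.
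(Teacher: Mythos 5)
Your proof is correct and essentially the same as the paper's: both are one-line specializations of the triangle axiom for $\alpha$ with an identity arrow inserted. The only (immaterial) difference is that you take the triple $(1_{x},f,f')$ so that $d$ collapses to $\phi$ by definition, whereas the paper takes $(f,1_{y},f')$ and invokes the previously established identity $d(f,1_{y},f')=d(1_{x},f,f')=\phi(f,f')$.
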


\begin{proof}
Apply the main property of $\alpha $ with $g:=1_{y}$ and $h:=f^{\prime }$.
It says 
\begin{equation*}
\alpha (f^{\prime })\leq \alpha (f)+\alpha (1_{y})+d(f,1_{y},f^{\prime }).
\end{equation*}%
Since $\alpha (1_{y})=0$ by hypothesis and 
\begin{equation*}
d(f,1_{y},f^{\prime })=\phi (f,f^{\prime })\text{,}
\end{equation*}
we get the first statement. The rest follows.
\end{proof}

We might want to assume the \emph{anti-reflexivity property} that%
\begin{equation*}
\alpha (f)=0\Rightarrow x=y,\;\;f=1_{x}\text{.}
\end{equation*}
However, we might also want to ignore this property for example if we set 
$\alpha (f):=1$ for all $f$.

\section{Transitivity}
\label{sec-trans}

Suppose $\alpha$ is a function on the arrow sets. We define a notion of 
\emph{transitivity}, or rather in fact several related notions. Here by
convention an $\inf$ over the empty set is $+\infty$ and its product with $0$
is said to be $0$.

\begin{definition}
\label{abstrans} We say that $(X,A,d)$ satisfies \emph{left transitivity
with respect to $\alpha$} if for all $x,y,z,w\in X$ and $f\in A(x,y)$, $g\in
A(y,z)$, $h\in A(z,w)$, $k\in A(y,w)$ and $l\in A(x,w)$ we have 
\begin{equation*}
\alpha (k)\inf _{a\in A(x,z)}(d(f,g,a) + d(a,h,l)) \leq d(g,h,k) + d(f,k,l). 
\end{equation*}
We say that $(X,A,d)$ satisfies \emph{right transitivity with respect to $%
\alpha$} if for all $x,y,z,w\in X$ and $f\in A(x,y)$, $g\in A(y,z)$, $h\in
A(z,w)$, $k\in A(x,z)$ and $l\in A(x,w)$ we have 
\begin{equation*}
\alpha (k)\inf _{a\in A(y,w)}(d(g,h,a) + d(f,a,l)) \leq d(f,g,k) + d(k,h,l). 
\end{equation*}
We say that $(X,A,d)$ is \emph{$\alpha$-transitive} if it satisfies both
conditions.

We say that $(X,A,d)$ is \emph{absolutely (left or right) transitive} if it
satisfies one or both of the above conditions with $\alpha (k)=1$ for all $k$%
.
\end{definition}

These notions may be motivated by the transitivity condition introduced in 
\cite{AS1} as shown by the following lemma. Another motivation is that these
transitivity conditions provide the information to be used in Section \ref%
{yoneda} below in order to show that the Yoneda constructions give functors.

\begin{lemma}
Suppose $(X,d_{X})$ is a set with a function $d_{X}(x,y,z)\in {\mathbb{R}}$.
Let $A^{\mathrm{coarse}}(x,y):=\{\ast _{x,y}\}$ define the coarse graph
structure on $X$. Define 
\begin{equation*}
d(\ast _{x,y},\ast _{y,z},\ast _{x,z}):=d_{X}(x,y,z)\text{.}
\end{equation*}
Then $(X,A^{\mathrm{coarse}},d)$ is an AC structure if and only if $(X,d_{X})
$ is a $2$-metric space. Suppose the $2$-metric $d_{X}$ is bounded. Put $%
\alpha (\ast _{x,y}):=\varphi (x,y)$ where $\varphi (x,y):=\sup_{c\in
X}d(x,y,c)$ is the distance function \cite{AS1}. If $(X,d_{X})$ satisfies
the transitivity axiom of \cite{AS1}, then $(X,A^{\mathrm{coarse}},d)$ is an 
$\alpha /2$-transitive AC structure. If $(X,A^{\mathrm{coarse}},d)$ is an $%
\alpha $-transitive AC structure then $(X,d_{X})$ satisfies the transitivity
axiom of \cite{AS1}.
\end{lemma}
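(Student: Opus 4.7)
The plan is to unwind every definition on the coarse graph structure and match the result directly against the classical $2$-metric axioms and the transitivity axiom of \cite{AS1}. The observation driving the proof is that each set $A^{\mathrm{coarse}}(x,y)$ is a singleton, so the AC data reduces to the single function $d_X$ (with the convention $1_x = \ast_{x,x}$), and each infimum in Definition \ref{abstrans} collapses to the value at the unique arrow.

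For the biconditional, I would check the AC axioms one at a time against the $2$-metric axioms. The left and right identity axioms $d(1_x,f,f)=0=d(f,1_y,f)$ translate into the degeneracy relations $d_X(x,x,y)=0$ and $d_X(x,y,y)=0$. Writing out the left and right associativity axioms on a tetrahedron $x\to y\to z\to w$ gives the pair
\begin{align*}
d_X(x,z,w) &\leq d_X(x,y,z)+d_X(y,z,w)+d_X(x,y,w),\\
d_X(x,y,w) &\leq d_X(x,y,z)+d_X(y,z,w)+d_X(x,z,w),
\end{align*}
which together with the symmetry conventions built into the notion of $2$-metric space recover the tetrahedral inequality in all its usual permuted forms. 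Conversely, these same inequalities are exactly what the associativity axioms demand, so the two sides of the biconditional correspond axiom-for-axiom.

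For the transitivity part, I would substitute the coarse data into the left and right conditions of Definition \ref{abstrans}. With $k=\ast_{y,w}$ in the left version (so $\alpha(k)=\varphi(y,w)$) the singleton infimum is evaluated at $a=\ast_{x,z}$, and the left transitivity becomes
\begin{equation*}
\varphi(y,w)\bigl[d_X(x,y,z)+d_X(x,z,w)\bigr] \leq d_X(y,z,w)+d_X(x,y,w),
\end{equation*}
while the right transitivity becomes
\begin{equation*}
\varphi(x,z)\bigl[d_X(y,z,w)+d_X(x,y,w)\bigr] \leq d_X(x,y,z)+d_X(x,z,w).
\end{equation*}
I would then compare these two inequalities to the transitivity axiom of \cite{AS1}, which I expect to be a single symmetric statement relating the two sums $d_X(x,y,z)+d_X(x,z,w)$ and $d_X(y,z,w)+d_X(x,y,w)$ with a common weight. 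If the \cite{AS1} axiom is essentially the sum of the two displays above, then it yields each of them with a factor of $1/2$, producing the $\alpha/2$-transitivity in the first direction. Conversely, $\alpha$-transitivity gives both displays without the factor, and combining them (by adding, or by specializing to the form in \cite{AS1}) recovers that axiom directly.

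The step I expect to be the main obstacle is the precise bookkeeping of the \cite{AS1} axiom --- where the weight $\varphi$ sits, and whether it appears as $\varphi(y,w)$, $\varphi(x,z)$, a minimum, or a sum --- in order to justify the factor $1/2$ appearing in the \cite{AS1}$\Rightarrow \alpha/2$-transitive direction but not in the $\alpha$-transitive$\Rightarrow$\cite{AS1} direction. Once the \cite{AS1} axiom is transcribed into the present notation, the rest of the argument is an algebraic rearrangement of the two inequalities displayed above.
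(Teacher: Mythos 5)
Your reduction of the coarse AC axioms to the $2$-metric axioms is correct and is exactly the paper's argument, and your two displayed translations of left and right transitivity are also correct. The gap is precisely where you flag it: you never pin down the transitivity axiom of \cite{AS1}, and your conditional guess that it is ``essentially the sum of the two displays'' is not right. That axiom is the single inequality
\begin{equation*}
d_{X}(x,y,w)\,\varphi (y,z)\leq d_{X}(x,y,z)+d_{X}(y,z,w),
\end{equation*}
with the weight attached to the \emph{middle} pair $(y,z)$ shared by the two triples on the right, whereas in your two displays the weights are $\varphi (y,w)$ and $\varphi (x,z)$, attached to the diagonal arrow $k$. The missing step is a relabelling: since both transitivity conditions are quantified over all ordered $4$-tuples and $d_{X}$ is symmetric in its arguments, each of your two families of inequalities is equivalent to the single family
\begin{equation*}
\varphi (y,z)\bigl(d_{X}(x,y,w)+d_{X}(x,z,w)\bigr)\leq d_{X}(x,y,z)+d_{X}(y,z,w)
\end{equation*}
(substitute the points in the order $x,y,w,z$ into your left-transitivity display, and in the order $y,x,z,w$ into your right-transitivity display, then use symmetry of $d_{X}$).

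Once that identification is made, both directions are immediate and are what the paper does. The \cite{AS1} axiom together with its permuted variant $d_{X}(x,z,w)\varphi (y,z)\leq d_{X}(x,y,z)+d_{X}(y,z,w)$ adds up to the displayed family with an extra factor of $2$ on the right, i.e.\ the displayed family holds for the weight $\varphi /2$; this is $\alpha /2$-transitivity, so the factor $1/2$ comes from averaging the axiom with its permutation, not from splitting a sum. Conversely, $\alpha$-transitivity gives the displayed family with weight $\varphi$, and since both terms on the left are non-negative you may drop either one to recover the \cite{AS1} axiom. Your proposal stops short of this bookkeeping, which is the actual content of the second half of the lemma, so as written it is incomplete rather than wrong.
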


\begin{proof}
Suppose $(X,d_{X})$ is a $2$-metric space, then we obtain an AC structure.
The identities are $1_{x}:=\ast _{x,x}$. We have $d_{X}(x,x,y)=0$ and $%
d_{X}(x,y,y)=0$, which show the left and right identity axioms for an AC
structure. The left associativity property for an AC structure requires that
for any $x,y,z,w\in X$ we have 
\begin{eqnarray*}
d(\ast _{x,z},\ast _{z,w},\ast _{x,w}) &\leq &d(\ast _{x,y},\ast _{y,x},\ast
_{x,z})+d(\ast _{y,z},\ast _{z,w},\ast _{y,w}) \\
&&+d(\ast _{x,y},\ast _{y,w},\ast _{x,w}).
\end{eqnarray*}%
This translates as 
\begin{equation*}
d_{X}(x,z,w)\leq d_{X}(x,y,z)+d_{X}(y,z,w)+d_{X}(x,y,w)
\end{equation*}%
which is the tetrahedral axiom for a $2$-metric space with $y$ as the point
in the middle. Similarly, right associativity for the AC structure
translates to the same tetrahedral axiom but with $z$ as the point in the
middle. Thus if $(X,d_{X})$ is a $2$-metric space then $(X,A,d)$ is an
AC-structure.

In the other direction, if $(X,A,d)$ is an AC structure then we have seen
that $d(\ast _{x,y},\ast_{y,z}, \ast _{x,z})\geq 0$, so $d_X(x,y,z)\geq 0$.
The axioms for a $2$-metric space now translate from the axioms for an AC
structure as above.

We now relate the transitivity conditions. This is not a perfect
correspondence, because we have modified our definition of transitivity
slightly in order that it work better with the discussion to come later in
the paper. Suppose $(X,d_{X})$ is a bounded $2$-metric space and put%
\begin{equation*}
\alpha (\ast _{x,y}):=\varphi (x,y)=\sup_{c\in X}d_{X}(x,y,c)\text{.}
\end{equation*}

The transitivity axiom of \cite{AS1} says that given $4$ points $x,y,z,w$ we
have 
\begin{equation*}
d_{X}(x,y,w)\varphi (y,z)\leq d_{X}(x,y,z)+d_{X}(y,z,w).
\end{equation*}%
It implies by permutation that 
\begin{equation*}
d_{X}(x,z,w)\varphi (y,z)\leq d_{X}(x,y,z)+d_{X}(y,z,w).
\end{equation*}

On the other hand, for an amplitude $\alpha $ our left and right
transitivity axioms in Definition \ref{abstrans} both translate in terms of $%
d_{X}$ to 
\begin{equation*}
\alpha (\ast _{y,z})(d_{X}(x,y,w)+d_{X}(x,z,w))\leq
d_{X}(x,y,z)+d_{X}(y,z,w).
\end{equation*}%
In the case of left transitivity we should apply Definition \ref{abstrans}
to the points in order $y,x,z,w$ which is the same as right transitivity of
Definition \ref{abstrans} for the points in order $x,y,w,z$.

If we assume the transitivity of \cite{AS1} then by adding the two previous
equations and dividing by $2$ we get Definition \ref{abstrans} for the
function $\alpha /2$. On the other hand, by positivity of the distances, if
we know the condition of Definition \ref{abstrans} for $\alpha$ then we get
the transitivity property of \cite{AS1}.
\end{proof}

Notice that the function $\alpha$ used for the definition of absolutely
transitive, is not an amplitude since it doesn't satisfy the property $%
\alpha (1_x)=0$. It will be interesting to see if Theorems \ref{mainintro}
and \ref{main}, which use the absolute transitivity condition in an
essential way, can be extended to relative transitivity with respect to an
amplitude function $\alpha$.

\section{The $0$-categoric situation}

If $(X,A,d)$ is $0$-categoric, then it corresponds to an actual category,
and the composition is a non expansive function $A(x,y)\times
A(y,z)\rightarrow A(x,z)$ with respect to the sum distance on the product.

\begin{theorem}
\label{zerocat}  Suppose $(X,A,d)$ is a $0$-categoric approximate category,
and suppose that it is separated (Definition \ref{separated}). Then for any $%
f\in A(x,y)$ and $g\in A(y,z)$ there is a unique element denoted $g\circ
f\in A(x,z)$ such that $d(f,g,g\circ f)=0$. This defines a composition
operation making $(X,A,\circ ,\phi )$ into a metrized category. If we let $%
d_{\phi}(f,g,h):= \phi (g\circ f, h)$ then we have $d_{\phi}(f,g,h)\leq
d(f,g,h)$ whenever these are defined. The composition maps 
\begin{equation*}
A(x,y)\times A(y,z)\rightarrow A(x,z) 
\end{equation*}
are continuous, and indeed they are distance nonincreasing if the left hand
side is provided with the sum metric.
\end{theorem}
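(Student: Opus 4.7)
The plan is to build the composition operation directly from the zero-distance relation, then verify the category axioms and metric compatibility using the associativity and identity axioms of the AC structure.

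First I would establish existence and uniqueness of $g \circ f$. The $0$-categoric hypothesis gives some $h \in A(x,z)$ with $d(f,g,h) \leq 0$, and combined with the general nonnegativity lemma $d \geq 0$, we get $d(f,g,h) = 0$. Uniqueness is then immediate from Corollary \ref{phizero} together with the separation hypothesis: if $d(f,g,h) = d(f,g,h') = 0$ then $\phi(h,h') = 0$, hence $h = h'$. Define $g \circ f$ to be this unique element. The identity laws $1_y \circ f = f$ and $g \circ 1_x = g$ fall out immediately from the AC-structure identity axioms $d(f,1_y,f) = 0$ and $d(1_x,g,g) = 0$ combined with uniqueness.

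Next I would verify associativity. Given $f \in A(x,y)$, $g \in A(y,z)$, $h \in A(z,w)$, set $a := g \circ f$, $b := h \circ g$, so $d(f,g,a) = 0$ and $d(g,h,b) = 0$. Let $c := h \circ a = h \circ (g \circ f)$ and $c' := b \circ f = (h \circ g) \circ f$, characterized by $d(a,h,c) = 0$ and $d(f,b,c') = 0$. Applying the left associativity axiom for the tetrahedron $(f,g,h;a,b;c')$ yields
\begin{equation*}
d(a,h,c') \leq d(f,g,a) + d(g,h,b) + d(f,b,c') = 0,
\end{equation*}
so $d(a,h,c') = 0$. By uniqueness of $h \circ a$, we conclude $c = c'$, which is associativity.

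For metric compatibility, I want to show $\phi(g \circ f, g' \circ f') \leq \phi(f,f') + \phi(g,g')$. Writing $h := g \circ f$ and $h' := g' \circ f'$, apply Lemmas \ref{phileft} and \ref{phiright} to get
\begin{equation*}
d(f',g',h) \leq d(f,g,h) + \phi(f,f') + \phi(g,g') = \phi(f,f') + \phi(g,g'),
\end{equation*}
and then use the preparatory lemma before Corollary \ref{phizero} applied to $h$ and $h'$, both compared against $(f',g')$:
\begin{equation*}
\phi(h,h') \leq d(f',g',h) + d(f',g',h') = d(f',g',h),
\end{equation*}
combining to give the desired composition bound \eqref{mcaxiom}. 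Hence $(X,A,\circ,\phi)$ is a metrized category. The inequality $d_\phi(f,g,h) \leq d(f,g,h)$ is then immediate from the same preparatory lemma: $\phi(g \circ f, h) \leq d(f,g,g \circ f) + d(f,g,h) = d(f,g,h)$. Continuity and the nonexpansive property on the product with sum metric are exactly the statement of \eqref{mcaxiom} just proved.

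I do not expect any serious obstacle; the work is essentially bookkeeping with the associativity axioms. The only point that requires a moment of care is associativity of $\circ$, where one must pick the correct instance of the left (or right) associativity inequality and exploit that three of the four relevant $d$-values vanish. Everything else reduces transparently to the lemmas already proved in Section \ref{metarrowsets}.
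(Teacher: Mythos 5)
Your proposal is correct and follows essentially the same route as the paper: existence from the $0$-categoric hypothesis plus nonnegativity, uniqueness from Corollary \ref{phizero} with separation, and the bound $\phi(g\circ f,g'\circ f')\leq\phi(f,f')+\phi(g,g')$ obtained by combining the perturbation estimates of Section \ref{metarrowsets} with the two-sided comparison lemma $\phi(a,a')\leq d(f,g,a)+d(f,g,a')$. You merely spell out the associativity-of-$\circ$ step (which the paper asserts in one line) and the $d_\phi\leq d$ inequality in more detail; both of your arguments are valid.
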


\begin{proof}
By Corollary \ref{phizero}, if $h$ and $h^{\prime }$ are any elements such
that%
\begin{equation*}
d(f,g,h)=d(f,g,h^{\prime })=0\text{,}
\end{equation*}
then $\phi (h,h^{\prime })=0$. Since $(X,A,d)$ is separated, this implies
that $h=h^{\prime }$. Therefore, the composition $h=g\circ f$ is unique. The
associativity (resp. unit) properties imply that the composition is
associative (resp. has units).

We would now like to bound the norm of the composition operation. Suppose $%
f,f^{\prime }\in A(x,y)$, $g,g^{\prime }\in A(y,z)$ and let $h:=g\circ f$
and $h^{\prime }:=g^{\prime }\circ f^{\prime }$. Apply Corollary \ref%
{dphicorollary} to $f^{\prime },f,g^{\prime },g$, and two times $h^{\prime }$%
. As $d(f^{\prime },g^{\prime },h^{\prime })=0$ and $\phi (h^{\prime
},h^{\prime })=0$ we get 
\begin{equation*}
d(f,g,h^{\prime })\leq \phi (f,f^{\prime })+\phi (g,g^{\prime }).
\end{equation*}%
On the other hand, 
\begin{equation*}
\phi (h,h^{\prime })=d(h,1_{z},h^{\prime }).
\end{equation*}%
Applying the the associativity tetrahedral property to $f,g,1_{z};h,g;h^{%
\prime }$ we get 
\begin{equation*}
d(h,1_{z},h^{\prime })\leq d(f,g,h)+d(g,1_{z},g)+d(f,g,h^{\prime }).
\end{equation*}%
This gives 
\begin{equation*}
\phi (h,h^{\prime })\leq \phi (f,f^{\prime })+\phi (g,g^{\prime }).
\end{equation*}%
It says that the composition map is non increasing from the sum distance on $%
A(x,y)\times A(y,z)$ to $A(x,z)$.
\end{proof}

\begin{proposition}
Suppose that $(X,A,d)$ is $\epsilon$-categoric for any $\epsilon > 0$ and
each metric space $(A(x,y),\phi )$ is complete. Then it is $0$-categoric.
\end{proposition}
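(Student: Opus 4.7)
The plan is to build $h$ as the limit of a Cauchy sequence of approximate compositions. For each integer $n\geq 1$, the $(1/n)$-categoric hypothesis furnishes some $h_n\in A(x,z)$ with $d(f,g,h_n)\leq 1/n$. I will show that $(h_n)$ is Cauchy in $(A(x,z),\phi)$, extract its limit $h$ by completeness, and check that $d(f,g,h)=0$.

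The Cauchy property is the key step, and it is immediate from the lemma just before Corollary \ref{phizero}: for any two approximate compositions $a,a'\in A(x,z)$ of $f$ and $g$ we have
\begin{equation*}
\phi(a,a')\leq d(f,g,a)+d(f,g,a').
\end{equation*}
Applied to $h_n$ and $h_m$ this yields $\phi(h_n,h_m)\leq 1/n+1/m$, so $(h_n)$ is Cauchy. By completeness of $A(x,z)$, there exists $h\in A(x,z)$ with $\phi(h_n,h)\to 0$.

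Finally, Corollary \ref{dphicorollary} gives $|d(f,g,h_n)-d(f,g,h)|\leq \phi(h_n,h)\to 0$, so $d(f,g,h)=\lim_n d(f,g,h_n)=0$, establishing that $(X,A,d)$ is $0$-categoric. The only real content is the contractive estimate on the approximate compositions; everything else is a standard completeness-plus-continuity packaging. There is no obstacle worth singling out, but one should note that separation is not needed for the statement as posed (we only need existence of $h$), although combining this with Theorem \ref{zerocat} to get an honest composition operation would require the separation hypothesis.
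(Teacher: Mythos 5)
Your proof is correct and follows essentially the same route as the paper: choose $h_n$ with $d(f,g,h_n)\leq 1/n$, show the sequence is Cauchy via the estimate $\phi(a,a')\leq d(f,g,a)+d(f,g,a')$, pass to the limit by completeness, and conclude by continuity of $d$ in the third variable. The only cosmetic difference is that you cite the already-established lemmas (the one before Corollary \ref{phizero} and Corollary \ref{dphicorollary}) where the paper re-derives the same inequalities directly from the associativity axioms.
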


\begin{proof}
Given $f\in A(x,y)$ and $g\in A(y,z)$, for every positive integer $m$,
choose an $h_{m}$ such that 
\begin{equation*}
d(f,g,h_{m})\leq 1/m\text{.}
\end{equation*}
By left associativity for $f,g,1_{z};h_{m},g;h_{n}$ we have 
\begin{eqnarray*}
\phi (h_{m},h_{n}) &=&d(h_{m},1_{z},h_{n}) \\
&\leq &d(f,g,h_{m})+d(g,1_{z},g)+d(f,g,h_{n}) \\
&\leq &\frac{1}{m}+\frac{1}{n}\text{.}
\end{eqnarray*}%
It follows that $(h_{m})$ is a Cauchy sequence. By the completeness
hypothesis, it has a limit which we denote $g\circ f$. By left associativity
for $1_{x},f,g;f,h_{m};g\circ f$ we get 
\begin{eqnarray*}
d(f,g,g\circ f) &\leq &d(1_{x},f,f)+d(f,g,h_{m})+d(1_{x},h_{m},g\circ f) \\
&\leq &\frac{1}{m}+\phi (h_{m},g\circ f).
\end{eqnarray*}%
The right side $\rightarrow 0$ as $m\rightarrow \infty $ so we obtain $%
d(f,g,g\circ f)=0$. This is the $0$-categoric property.
\end{proof}

\begin{lemma}
\label{epcattrans} If $(X,A,d,\alpha )$ is $\epsilon$-categoric for any $%
\epsilon >0$, then it is absolutely transitive.
\end{lemma}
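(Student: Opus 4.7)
The plan is to use the $\epsilon$-categoric hypothesis to produce, for each $\epsilon > 0$, an arrow $a$ that is an approximate composition of the right pair of arrows, and then apply one of the associativity axioms to control the remaining term in the infimum. Letting $\epsilon \to 0$ then yields the absolute transitivity inequality with $\alpha(k)=1$.

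For left transitivity, fix the data $f \in A(x,y)$, $g \in A(y,z)$, $h \in A(z,w)$, $k \in A(y,w)$, $l \in A(x,w)$ and let $\epsilon > 0$. By the $\epsilon$-categoric property applied to $f$ and $g$, I choose $a \in A(x,z)$ with $d(f,g,a) \leq \epsilon$; in particular the infimum over $A(x,z)$ is finite. Now I apply the left associativity axiom to the tetrahedron $(f,g,h;a,k;l)$, which gives
\begin{equation*}
d(a,h,l) \leq d(f,g,a) + d(g,h,k) + d(f,k,l).
\end{equation*}
Adding $d(f,g,a) \leq \epsilon$ to both sides and dropping to the infimum over $a$,
\begin{equation*}
\inf_{a \in A(x,z)} \bigl(d(f,g,a) + d(a,h,l)\bigr) \leq 2\epsilon + d(g,h,k) + d(f,k,l).
\end{equation*}
Letting $\epsilon \to 0$ yields the absolute left transitivity inequality.

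The argument for right transitivity is formally the same, using right associativity in place of left. Given $f,g,h,k \in A(x,z), l$, I use the $\epsilon$-categoric property applied to $g$ and $h$ to produce $a \in A(y,w)$ with $d(g,h,a) \leq \epsilon$, and then apply the right associativity axiom to the tetrahedron $(f,g,h;k,a;l)$ to obtain
\begin{equation*}
d(f,a,l) \leq d(f,g,k) + d(g,h,a) + d(k,h,l).
\end{equation*}
Adding $d(g,h,a) \leq \epsilon$, passing to the infimum over $a \in A(y,w)$, and letting $\epsilon \to 0$ gives the absolute right transitivity inequality.

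There is no real obstacle here: the key observation is simply that the associativity axioms already provide the desired inequality up to the slack term $d(f,g,a)$ (respectively $d(g,h,a)$), and $\epsilon$-categoricity lets us drive this slack to zero while keeping the relevant infimum finite. The only mild subtlety is ensuring nonemptiness of $A(x,z)$ (respectively $A(y,w)$) so that the infimum is not vacuously $+\infty$, but this is exactly what $\epsilon$-categoricity supplies.
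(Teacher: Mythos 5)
Your proof is correct and follows essentially the same route as the paper: use $\epsilon$-categoricity to produce an approximate composite $a$ with $d(f,g,a)\leq\epsilon$ (resp.\ $d(g,h,a)\leq\epsilon$), apply the appropriate associativity axiom to bound the other term, and let $\epsilon\to 0$. You even spell out the right-transitivity case that the paper leaves as ``similar,'' and your tetrahedron instantiations are the correct ones.
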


\begin{proof}
We show absolute left transitivity. Suppose given $f,g,h,k,l$ as in the
definition. For any $\epsilon >0$ there exists $a\in A(x,z)$ such that $%
d(f,g,a)<\epsilon $. By the tetrahedral axiom, 
\begin{eqnarray*}
d(a,h,l) &\leq &d(f,g,a)+d(g,h,k)+d(f,k,l) \\
&=&\epsilon +d(g,h,k)+d(f,k,l).
\end{eqnarray*}%
Therefore 
\begin{equation*}
d(f,g,a)+d(a,h,l)\leq 2\epsilon +d(g,h,k)+d(f,k,l).
\end{equation*}%
Such an $a$ exists for any $\epsilon >0$, thus 
\begin{equation*}
\inf_{a\in A(x,z)}(d(f,g,a)+d(a,h,l))\leq d(g,h,k)+d(f,k,l).
\end{equation*}%
This is the absolute left transitivity condition. The proof for absolute
right transitivity is similar.
\end{proof}

\section{Examples}

\label{examples}

\subsection{Example from a $2$-metric space}

\label{ex2met}

Suppose $(X,d)$ is a bounded $2$-metric space. That is, we suppose first
that $d$ satisfies the axioms denoted $\mathrm{(Sym)}$, $\mathrm{(Tetr)}$, $%
\mathrm{(Z)}$, $\mathrm{(N)}$, and $\mathrm{(B)}$ of \cite{AS1}. As there,
define $\varphi (x,y)$ to be the supremum of $d(x,y,c)$ for $c\in X$. Then
we put $A(x,y):=\{f_{x,y}\}$ (the set with a single element which is denoted
as $f_{x,y}$). Set $1_{x}:=f_{x,x}$. Define 
\begin{equation*}
d(f_{x,y},f_{y,z},f_{x,z}):=d(x,y,z)\text{ and }\alpha (f_{x,y}):=\varphi
(x,y)\text{.}
\end{equation*}

\begin{lemma}
With the above notations and hypotheses, the structure $(X,A,d,\alpha )$ is
an approximate categorical structure with amplitude $\alpha $ and it
satisfies the small transitivity condition. If $(X,d)$ satisfies
transitivity (\cite{AS1} axiom $\mathrm{(Trans)}$, then the approximate
categorical structure is transitive with respect to the amplitude $\alpha
=\varphi $ in the sense of Definition \ref{abstrans}.
\end{lemma}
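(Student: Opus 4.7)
The plan is to verify in turn the three assertions of the lemma: that $(X,A,d)$ is an AC-structure, that $\alpha = \varphi$ qualifies as an amplitude, and that the transitivity conclusions hold. The first assertion is essentially immediate because the arrow sets $A(x,y) = \{f_{x,y}\}$ are singletons, so we are in the coarse-graph regime already analyzed in the comparison lemma of Section \ref{sec-trans}. That lemma shows that the identity and associativity axioms for the AC-structure translate respectively into the vanishing axioms and the tetrahedral inequality for $d_X$, all of which are assumed in the hypothesis that $(X,d)$ is a $2$-metric space satisfying $\mathrm{(Z)}$, $\mathrm{(N)}$, $\mathrm{(Sym)}$, $\mathrm{(Tetr)}$ and $\mathrm{(B)}$.

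For the amplitude axioms, reflexivity $\alpha(1_x) = \varphi(x,x) = \sup_c d_X(x,x,c) = 0$ is immediate from $\mathrm{(Z)}$. The triangle inequality for $\alpha$ becomes $\varphi(x,z) \leq \varphi(x,y) + \varphi(y,z) + d_X(x,y,z)$, which is the standard tetrahedral-style relation between the induced distance and the $2$-metric recorded in \cite{AS1}; the two permuted triangle inequalities follow from the first by the symmetry $\mathrm{(Sym)}$ of $d_X$ and the corresponding symmetry of $\varphi$. Boundedness $\mathrm{(B)}$ ensures that $\varphi$ is real-valued. The small transitivity claim, which should hold unconditionally for any bounded $2$-metric space, follows from $\mathrm{(Tetr)}$ together with non-negativity of $d_X$.

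For the $\varphi$-transitivity in the sense of Definition \ref{abstrans}, I would unwind the definition with $A(x,z)$ and $A(y,w)$ singletons, so that both infima become single terms. The left transitivity inequality then reduces to
\[
\varphi(y,w)\bigl(d_X(x,y,z) + d_X(x,z,w)\bigr) \leq d_X(y,z,w) + d_X(x,y,w),
\]
and right transitivity to a permuted counterpart. Both follow from two instances of $\mathrm{(Trans)}$ (applied to the relevant pair of quadruples and with the middle pair permuted) combined with $\mathrm{(Sym)}$, exactly as in the comparison lemma of Section \ref{sec-trans}. The main obstacle is purely bookkeeping: one has to align $\mathrm{(Trans)}$, which singles out the \emph{middle} pair of a colinear quadruple, with the \emph{asymmetric} left/right transitivity of Definition \ref{abstrans}, and verify that the factor of $2$ appearing in the comparison lemma (where the conclusion was $\alpha/2$-transitivity) is compatible with the intended statement here; if it is not, the correct conclusion is $\varphi/2$-transitivity and the statement should be adjusted accordingly.
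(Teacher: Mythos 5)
Your argument follows the same route as the paper's own (very terse) proof: identities from $\mathrm{(Z)}$, associativity from $\mathrm{(Tetr)}$ together with $\mathrm{(Sym)}$, the amplitude axioms from the inequality $\varphi(x,z)\leq\varphi(x,y)+\varphi(y,z)+d_X(x,y,z)$ plus the symmetry of $d_X$ and of $\varphi$, and the transitivity claims by unwinding Definition \ref{abstrans} over singleton arrow sets so that the infima collapse. One small misattribution: small transitivity does not come from $\mathrm{(Tetr)}$; it is immediate from the definition $\varphi(x,y)=\sup_{c}d_X(x,y,c)$, which gives $d_X(x,y,z)\leq\varphi(x,y)$ directly and $d_X(x,y,z)=d_X(y,z,x)\leq\varphi(y,z)$ via $\mathrm{(Sym)}$, after which the unique candidate $m=f_{x,z}$ works.

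More importantly, your hesitation about the factor of $2$ is justified. The axiom $\mathrm{(Trans)}$ and its permuted form yield the two separate inequalities $\varphi(y,w)\,d_X(x,y,z)\leq d_X(y,z,w)+d_X(x,y,w)$ and $\varphi(y,w)\,d_X(x,z,w)\leq d_X(y,z,w)+d_X(x,y,w)$, and adding them gives only
\[
\tfrac{1}{2}\,\varphi(y,w)\bigl(d_X(x,y,z)+d_X(x,z,w)\bigr)\leq d_X(y,z,w)+d_X(x,y,w),
\]
which is transitivity with respect to the amplitude $\varphi/2$, not $\varphi$. This is exactly what the comparison lemma of Section \ref{sec-trans} concludes (it is stated there as ``$\alpha/2$-transitive''), and the present lemma's one-line justification (``transitivity of the $2$-metric implies transitivity for the approximate categorical structure'') does not supply the missing factor. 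So the final clause of the statement should read ``transitive with respect to the amplitude $\varphi/2$''; with that adjustment your argument is complete and matches the paper's.
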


\begin{proof}
The associativity conditions come from $\mathrm{(Tetr)}$ using symmetry $%
\mathrm{(Sym)}$. The identity conditions with $1_{x}:=f_{x,x}$ come from $%
\mathrm{(Z)}$. The axioms for $\alpha (f_{x,y}):=\varphi (x,y)$ come from
the property%
\begin{equation*}
\varphi (x,z)\leq \varphi (x,y)+\varphi (y,z)+d(x,y,z)\text{,}
\end{equation*}
which in turn came from the tetrahedral condition in view of the definition $%
\varphi (x,y):=\mathrm{sup}_{c}d(x,y,c)$. Small transitivity comes about
because%
\begin{equation*}
d(x,y,z)\leq \mathrm{min}\{\alpha (f_{x,y}),\alpha (f_{y,z})\}
\end{equation*}
in view of the definition of $\varphi $. Similarly, transitivity of the $2$%
-metric implies transitivity for the approximate categorical structure.
\end{proof}

\noindent \emph{Remark:} If $L\subset X$ is a line, then it corresponds to a 
$0$-categoric sub-structure of $(X,A,d)$.

\noindent \emph{Remark:} The AC structure $(X,A,d)$ defined from a $2$%
-metric space as above, is generally not absolutely transitive, because we
need to use the amplitude and in particular $\alpha (1_x)=0$.

\subsection{A finite example}

\label{exfinite}

Consider a very first case. Let $X=\{x\}$ have a single object and $%
A(x,x)=\{1,e\}$ with $1=1_{x}$. Put 
\begin{equation*}
\phi :=\phi (1,e)=d(1,e,1)=d(e,1,1)=d(1,1,e).
\end{equation*}%
The remaining quantities to consider are $d(e,e,e)$ and $d(e,e,1)$. Recall
that there are two categorical structures, with $e^{2}=e$ or $e^{2}=1$ and
these two numbers represent the distances to these two cases.

From the various associativity laws we get the following inequalities: 
\begin{equation*}
d(e,e,e)\leq \phi ,
\end{equation*}%
\begin{equation*}
d(e,e,1)\leq 2\phi ,
\end{equation*}%
\begin{equation*}
|d(e,e,e)-\phi |\leq d(e,e,1),
\end{equation*}%
and 
\begin{equation*}
|d(e,e,1)-\phi |\leq d(e,e,e).
\end{equation*}%
Since everything is invariant under scaling (and trivial if $\phi =0$) we
may assume $\phi =1$ and set 
\begin{equation*}
u:=d(e,e,e),\;\;\;v:=d(e,e,1).
\end{equation*}%
Note that $u,v\geq 0$. The inequalities become 
\begin{equation*}
u\leq 1,\text{ }v\leq 2,\text{ }|u-1|\leq v\text{ \ and \ }|v-1|\leq u
\end{equation*}%
which reduce to 
\begin{equation*}
u\leq 1,\text{ }u+v\geq 1\text{ \ and \ }v\leq u+1\text{.}
\end{equation*}%
Hence, the graph of the allowed region in the $(u,v)$-plane looks like: 
\begin{equation*}
\setlength{\unitlength}{.3mm}%
\begin{picture}(150,200)


\qbezier(40,30)(40,100)(40,170)
\put(40,170){\vector(0,1){5}}
\put(30,160){$v$}

\qbezier(30,40)(80,40)(130,40)
\put(130,40){\vector(1,0){5}}
\put(120,30){$u$}

\linethickness{.5mm}
\qbezier(90,40)(90,90)(90,140)

\qbezier(90,40)(65,65)(40,90)

\qbezier(40,90)(65,115)(90,140)
\thinlines

\put(90,40){\circle*{4}}
\put(80,28){${\scriptstyle e^2=1}$}

\put(40,90){\circle*{4}}
\put(15,89){${\scriptstyle e^2=e}$}

\put(90,140){\circle*{4}}

\put(90,45){\line(-1,0){5}}
\put(90,50){\line(-1,0){10}}
\put(90,55){\line(-1,0){15}}
\put(90,60){\line(-1,0){20}}
\put(90,65){\line(-1,0){25}}
\put(90,70){\line(-1,0){30}}
\put(90,75){\line(-1,0){35}}
\put(90,80){\line(-1,0){40}}
\put(90,85){\line(-1,0){45}}
\put(90,90){\line(-1,0){50}}

\put(90,135){\line(-1,0){5}}
\put(90,130){\line(-1,0){10}}
\put(90,125){\line(-1,0){15}}
\put(90,120){\line(-1,0){20}}
\put(90,115){\line(-1,0){25}}
\put(90,110){\line(-1,0){30}}
\put(90,105){\line(-1,0){35}}
\put(90,100){\line(-1,0){40}}
\put(90,95){\line(-1,0){45}}

\end{picture}
\end{equation*}%
The categorical structures are $(u,v)=(1,0)$ for $e^{2}=1$ and $(u,v)=(0,1)$
for $e^{2}=e$. The third vertex $(1,2)$ is an extremal case where no
categorical relations hold.

\subsection{Paths}

\label{expaths}

Consider $X:= {\mathbb{R}} ^2$, and let $A(x,y)$ be the set of continuous
paths $f:[0,1]\rightarrow X$ with $f(0)=x$ and $f(1)=y$. Let $d(f,g,h)$
denote the infimum of the areas of disks mapping to $X$ such that the
boundary maps to the circle defined by joining the paths $f$, $g$ and $h$.
Let $1_x$ denote the constant path at $x$.

\begin{lemma}
The resulting triple $(X,A,d)$ is an approximate categorical structure.
\end{lemma}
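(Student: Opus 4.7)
The plan is to verify the identity and associativity axioms of Definition \ref{acs} directly from the geometric definition of $d$.

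For the identity axioms, I would exhibit a disk of zero area bounded by $f \cdot 1_y \cdot \bar f$: the degenerate parameterization $(s,t) \mapsto f(s)$ from the unit square onto the image of $f$ has zero area, giving $d(f, 1_y, f) = 0$. A symmetric argument gives $d(1_x, f, f) = 0$.

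For the associativity axioms, the key geometric observation is that the data $(f,g,h;a,b;c)$ fits exactly the edge structure of an abstract tetrahedron with vertices $x,y,z,w$, and the four triples on which $d$ is evaluated---namely $(f,g,a)$, $(g,h,b)$, $(a,h,c)$, $(f,b,c)$---are precisely its four triangular faces. For the left associativity inequality, I would fix $\epsilon > 0$, choose disks $D_1, D_2, D_3$ realizing the loops of $(f,g,a)$, $(g,h,b)$, $(f,b,c)$ with areas at most $d(\cdot) + \epsilon/3$, parameterize each $D_i$ by a topological triangle whose boundary traces the three arrows of its face in the correct cyclic order, and glue the three triangles along their pairwise-shared edges---namely $g$ (shared by the first two faces), $f$ (shared by the first and third), and $b$ (shared by the last two). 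The resulting space is a tetrahedron with one face removed, hence topologically a disk, and its boundary traces $a, h, c$ in exactly the cyclic order defining the loop for $(a,h,c)$. Its area is the sum of the three pieces, so letting $\epsilon \to 0$ yields
\[
d(a,h,c) \leq d(f,g,a) + d(g,h,b) + d(f,b,c).
\]
Right associativity is proved by the symmetric construction: gluing disks for $(f,g,a)$, $(g,h,b)$, $(a,h,c)$ along the shared edges $g, a, h$ produces a disk whose boundary is the loop of $(f,b,c)$.

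The main obstacle will be making the gluing rigorous. One must track the combinatorics of orientations, ensuring that along each shared edge the two triangles meeting there induce opposite orientations (so that the glued object really is a topological disk) and that after gluing the three surviving edges appear on the outer boundary in the correct cyclic order for the target loop. This is a standard but delicate bookkeeping exercise in the combinatorics of the tetrahedron; once set up, area additivity holds on the nose because the three original disks overlap only along the one-dimensional shared edges, which have two-dimensional measure zero. In practice, I would use a fixed model of a tetrahedral boundary with one face removed, realize the three $D_i$ as maps from its three remaining faces, and verify compatibility vertex by vertex.
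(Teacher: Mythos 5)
Your proposal is correct and follows essentially the same route as the paper: a degenerate parameterization collapsing onto the image of $f$ for the identity axioms, and gluing three near-optimal disks along the shared edges of the tetrahedron (with an $\epsilon/3$ argument for the infimum) to bound the fourth face for associativity. The paper's proof is in fact terser than yours, leaving the gluing combinatorics and orientation bookkeeping implicit, so your added care there is only an elaboration of the same argument.
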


\begin{proof}
Suppose $f:[0,1]\rightarrow X$ is a path from $x$ to $y$. If we define $%
\varphi (s,t):=f(s)$ and restrict to the triangle whose vertices are $(0,0)$%
, $(1,0)$ and $(1,1)$. The triangle is homeomorphic to a disk and we obtain
a disk mapping to $X$ whose boundary consists of the paths $f$, $1_{y}$ and $%
f$ again with total area zero. This shows $d(f,1_{y},f)=0$. The other
identity axiom holds similarly. For the tetrahedral axioms, given three
disks corresponding to triangles in the interior of the tetrahedron we can
paste them together to get a disk whose boundary consists of the three outer
edges and whose area is the sum of the three areas. This shows the required
tetrahedral property for either left or right associativity.
\end{proof}

\section{Functors }

\label{functors}

Given $(X,A)$ and $(Y,B)$, a \emph{prefunctorial map} $F: (X,A)\rightarrow
(Y,B)$ consists of a map $F: X\rightarrow Y$ and, for all $x,y\in X$, a map $%
F: A(x,y)\rightarrow A(Fx, Fy)$. We generally assume that it is \emph{unital}%
, that is $F(1_x)= 1_{Fx}$.

Given approximate categorical structures denoted $d$ on $(X,A)$ and $(Y,B)$
and $k\geq 0$, we say that a prefunctorial map $F$ is \emph{$k$-functorial}
if it is unital and whenever $x,y,z\in X$ and $f\in A(x,y)$, $g\in A(y,z)$
and $h\in A(x,z)$ we have 
\begin{equation*}
d(F(f),F(g),F(h))\leq kd(f,g,h).
\end{equation*}

If $(X,A)$ and $(Y,B)$ are $0$-categorical and separated, then this implies
that $F$ respects composition, hence it defines a functor between categories.

\begin{lemma}
Suppose $F: (X,A)\rightarrow (Y,B)$ is a $k$-functorial map. Then for any $%
x,y\in X$ and $f,f^{\prime }\in A(x,y)$ we have 
\begin{equation*}
\phi (F(f),F(f^{\prime }))\leq k\phi (f,f^{\prime }). 
\end{equation*}
\end{lemma}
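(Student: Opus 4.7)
The plan is to unwind the definition of $\phi$ and apply the $k$-functoriality hypothesis directly to a suitably chosen triple. Recall from Section \ref{metarrowsets} that the pseudometric on arrow sets was defined by
\begin{equation*}
\phi(f, f') := d(1_x, f, f')
\end{equation*}
for $f, f' \in A(x,y)$, and analogously on $B(Fx, Fy)$ by
\begin{equation*}
\phi(F(f), F(f')) = d(1_{Fx}, F(f), F(f')).
\end{equation*}

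First I would apply the $k$-functoriality inequality to the triple $(1_x, f, f')$, which lies in $A(x,x) \times A(x,y) \times A(x,y)$; this gives
\begin{equation*}
d(F(1_x), F(f), F(f')) \leq k \, d(1_x, f, f').
\end{equation*}
Next I invoke the unitality assumption $F(1_x) = 1_{Fx}$ to rewrite the left-hand side as $d(1_{Fx}, F(f), F(f'))$, which equals $\phi(F(f), F(f'))$ by definition. The right-hand side is $k \phi(f, f')$, and the desired inequality follows immediately.

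There is essentially no obstacle here: the argument is a two-line unpacking of definitions. The only point worth flagging is that the assumption of unitality (rather than just the inequality on $d$) is genuinely needed, since without $F(1_x) = 1_{Fx}$ we would only control $d(F(1_x), F(f), F(f'))$, which is a distance relative to an arbitrary arrow $F(1_x) \in B(Fx, Fx)$ and not the pseudometric $\phi$ on $B(Fx, Fy)$.
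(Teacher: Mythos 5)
Your argument is correct and is exactly the paper's intended proof: the paper's own justification is the one-line remark that the claim ``follows from the definition of $\phi$ and the condition that $F$ is unital,'' and you have simply spelled out that unwinding (apply $k$-functoriality to the triple $(1_x,f,f')$ and use $F(1_x)=1_{Fx}$). Nothing further is needed.
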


\begin{proof}
It follows from the definition of $\phi$ and the condition that $F$ is
unital.
\end{proof}

\section{Metric correspondences}

\label{metriccorrespondences}

Suppose $(X,d_{X})$ and $(Y,d_{Y})$ are bounded metric spaces. We define a
set of \emph{metric correspondences} denoted ${\mathcal{M}}(X,Y)$ as
follows. An element of ${\mathcal{M}}(X,Y)$ is a bounded function $f:X\times
Y\rightarrow {\mathbb{R}}$ satisfying the following axioms: \newline
(MC0)---if $X$ is nonempty then $Y$ is nonempty; \newline
(MC1)---for any $x,x^{\prime }\in X$ and $y\in Y$ we have%
\begin{equation*}
f(x,y)\leq d_{X}(x,x^{\prime })+f(x^{\prime },y)\text{;}
\end{equation*}
(MC2)---for any $x\in X$ and $y,y^{\prime }\in Y$ we get%
\begin{equation*}
f(x,y)\leq f(x,y^{\prime })+d_{Y}(y,y^{\prime })\text{.}
\end{equation*}

A \emph{functional metric correspondence} is a metric correspondence which
also satisfies the axiom \newline
(MF)---for any $x\in X$ and $y,y^{\prime }\in Y$ we obtain%
\begin{equation*}
d_{Y}(y,y^{\prime })\leq f(x,y)+f(x,y^{\prime })\text{.}
\end{equation*}

Let ${\mathcal{F}}(X,Y)\subset {\mathcal{M}}(X,Y)$ be the subset of
functional metric correspondences.

\begin{definition}
If $X,Y,Z$ are metric spaces, and $f\in {\mathcal{M}}(X,Y)$ and $g\in {%
\mathcal{M}}(Y,Z)$ define their composition by 
\begin{equation*}
(g\circ f)(x,z):= \inf _{y\in Y} ( f(x,y)+ g(y,z)) . 
\end{equation*}
If $X\neq \emptyset$ then by (MC0) also $Y\neq \emptyset$ so we can form the 
$\inf$. If $X=\emptyset$ then nothing needs to be given to define $(g\circ f)
$.

Define the identity $i_{X}\in {\mathcal{M}}(X,X)$ by 
\begin{equation*}
i_{X}(x,x):=d_{X}(x,x)\text{.}
\end{equation*}

Define a distance on ${\mathcal{M}}(X,Y)$ by 
\begin{equation*}
d_{\mathcal{M}}(X,Y)(f,f^{\prime }):= \sup _{x\in X,y\in Y}|
f(x,y)-f^{\prime }(x,y)| . 
\end{equation*}
The supremum exists since we have assumed that our correspondence function $f
$ is bounded.
\end{definition}

\begin{proposition}
\label{mcmc} The composition operation 
\begin{equation*}
\circ : {\mathcal{M}}(Y,Z)\times {\mathcal{M}}(X,Y) \rightarrow {\mathcal{M}}%
(X,Z) 
\end{equation*}
defined in the previous definition, with the identities $i_X$, provides a
structure of metrized category denoted $\mathbf{MetCor}$ whose objects are
the metric spaces and whose morphism spaces are the metric spaces \linebreak[%
9] $({\mathcal{M}}(X,Y), d_{{\mathcal{M}}(X,Y)})$.
\end{proposition}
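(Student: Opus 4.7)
The plan is to check the proposition in five steps: (i) well-definedness of composition as a map into $\Mm(X,Z)$; (ii) that the $i_X$ are themselves in $\Mm(X,X)$ and act as two-sided identities; (iii) associativity; (iv) that $d_{\Mm(X,Y)}$ is a metric; (v) the non-expansive (sum-metric) property of $\circ$, which is the enrichment axiom \eqref{mcaxiom}. Each step amounts to an $\inf$/$\sup$ manipulation together with a use of one of the axioms (MC0), (MC1), (MC2). I read the displayed formula $i_X(x,x):=d_X(x,x)$ as the typographical abbreviation for $i_X(x,x'):=d_X(x,x')$ on $X\times X$; otherwise the identity laws would be false.

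For step (i), given $f\in \Mm(X,Y)$ and $g\in\Mm(Y,Z)$, I first note that (MC0) for $g\circ f$ is immediate from two applications of (MC0): $X\neq\emptyset$ forces $Y\neq\emptyset$ and then $Z\neq\emptyset$, so the infimum is taken over a nonempty set. Boundedness below follows from the boundedness of $f$ and $g$ (their sum is bounded below); boundedness above follows from evaluating at any fixed $y_0\in Y$. The axioms (MC1) and (MC2) for $g\circ f$ are checked by the standard epsilon-inf argument: for any $\varepsilon>0$, pick $y$ with $f(x',y)+g(y,z)<(g\circ f)(x',z)+\varepsilon$, then apply (MC1) for $f$ to bound $f(x,y)\leq d_X(x,x')+f(x',y)$, add $g(y,z)$, take the inf on the left, and let $\varepsilon\to 0$. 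Step (ii) for the identity $i_X$ reduces to the triangle inequality for $d_X$ (for (MC1) and (MC2)); the identity laws $f\circ i_X=f$ and $i_Y\circ f=f$ follow by noting that (MC1) gives $f(x,y)\leq d_X(x,x')+f(x',y)$ (so $f(x,y)\leq(f\circ i_X)(x,y)$), while choosing $x'=x$ gives the reverse inequality using $d_X(x,x)=0$.

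For step (iii), associativity is immediate: writing out both $((h\circ g)\circ f)(x,w)$ and $(h\circ(g\circ f))(x,w)$ and using that a double infimum can be performed in either order, both equal $\inf_{y,z}(f(x,y)+g(y,z)+h(z,w))$. For step (iv), $d_{\Mm(X,Y)}$ is well-defined (finite) because $f$ and $f'$ are bounded; reflexivity, symmetry, and the triangle inequality are standard for a sup-of-pointwise-absolute-differences metric, and separation holds because $d_{\Mm(X,Y)}(f,f')=0$ forces $f(x,y)=f'(x,y)$ at every $(x,y)$, hence $f=f'$.

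Step (v) is the only place where the precise enriched structure intervenes, and it is the heart of the proposition. Given $f,f'\in\Mm(X,Y)$ and $g,g'\in\Mm(Y,Z)$, I would fix $(x,z)$ and note that for every $y\in Y$,
\begin{equation*}
f(x,y)+g(y,z) \leq f'(x,y)+g'(y,z) + d_{\Mm(X,Y)}(f,f') + d_{\Mm(Y,Z)}(g,g').
\end{equation*}
Taking $\inf_{y}$ on both sides gives
\begin{equation*}
(g\circ f)(x,z) \leq (g'\circ f')(x,z) + d_{\Mm(X,Y)}(f,f') + d_{\Mm(Y,Z)}(g,g'),
\end{equation*}
and by symmetry the reverse inequality, whence $\sup_{x,z}$ yields the sum-metric non-expansion $d_{\Mm(X,Z)}(g\circ f,\, g'\circ f') \leq d_{\Mm(X,Y)}(f,f') + d_{\Mm(Y,Z)}(g,g')$, which is \eqref{mcaxiom}. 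I do not expect a serious obstacle anywhere; the only subtlety is remembering to handle the empty cases (ensured by (MC0) and the convention that $\inf\emptyset=+\infty$ interacts correctly with the vacuous requirement when $X=\emptyset$) and to use the boundedness hypothesis consistently so that all suprema and infima are finite.
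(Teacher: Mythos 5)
Your proof is correct and follows essentially the same route as the paper: identity laws via (MC1) plus the choice $u=x$, associativity by exchanging the two infima, and the enrichment axiom \eqref{mcaxiom} by pointwise bounding $f(x,y)+g(y,z)$ against $f'(x,y)+g'(y,z)$ and then taking $\inf_y$ and $\sup_{x,z}$. You additionally verify the routine points the paper leaves implicit --- that $g\circ f$ and $i_X$ actually lie in ${\mathcal{M}}$, that $d_{\mathcal{M}}$ is a (separated) metric, and the reading of $i_X$ as the full distance function $d_X(x,x')$ --- all of which are correct and consistent with the paper's later usage.
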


\begin{proof}
First, suppose $f\in {\mathcal{M}}(X,Y)$, and consider the composition $%
g:=f\circ i_{X}$. We have 
\begin{equation*}
g(x,y)=\inf_{u\in X}(i_{X}(x,u)+f(u,z)).
\end{equation*}%
Taking $u:=x$ we get $g(x,y)\leq f(x,y)$, but on the other hand, by
hypothesis 
\begin{equation*}
f(x,y)\leq i_{X}(x,u)+f(u,y)
\end{equation*}%
for any $u$, so 
\begin{equation*}
f(x,y)\leq g(x,y)\text{.}
\end{equation*}
This shows the right identity axiom $f\circ i_{X}=f$ and the proof for left
identity is the same.

Suppose $f\in {\mathcal{M}}(X,Y)$, $g\in {\mathcal{M}}(Y,Z)$ and $h\in {%
\mathcal{M}}(Z,W)$. Put $a:=g\circ f$. Then 
\begin{equation*}
a(x,z)=\inf_{y\in Y}(f(x,y)+g(y,z))
\end{equation*}%
and 
\begin{eqnarray*}
(h\circ a)(x,w) &=&\inf_{z\in Z}(a(x,z)+h(z,w)) \\
&=&\inf_{y\in Y,z\in Z}(f(x,y)+g(y,z)+h(z,w)).
\end{eqnarray*}%
If we put $b:=h\circ g$ then the expression for $(b\circ f)(x,w)$ is the
same, showing associativity. The composition operation therefore defines a
category.

To show that the metric gives a metrized structure, we need to show that 
\begin{equation*}
d_{\mathcal{M}}(X,Z)(g\circ f,g\circ f^{\prime })\leq d_{\mathcal{M}%
}(X,Y)(f,f^{\prime })+d_{\mathcal{M}}(Y,Z)(g,g^{\prime }).
\end{equation*}%
Suppose 
\begin{equation*}
d_{\mathcal{M}}(X,Y)(f,f^{\prime })<\epsilon \text{ \ and \ }d_{\mathcal{M}%
}(Y,Z)(g,g^{\prime })<\varepsilon \text{.}
\end{equation*}%
It means that 
\begin{equation*}
f(x,y)<f^{\prime }(x,y)+\epsilon \text{, }\;f^{\prime }(x,y)<f(x,y)+\epsilon 
\end{equation*}%
and 
\begin{equation*}
g(y,z)<g^{\prime }(y,z)+\varepsilon \text{,}\;\;g^{\prime
}(y,z)<g(y,z)+\varepsilon .
\end{equation*}%
Then 
\begin{eqnarray*}
(g^{\prime }\circ f^{\prime })(x,z) &=&\inf_{y\in Y}(f^{\prime
}(x,y)+g^{\prime }(y,z)) \\
&\leq &\inf_{y\in Y}(f(x,y)+\epsilon +g(y,z)+\varepsilon ) \\
&=&(g\circ f)(x,z)+\epsilon +\varepsilon .
\end{eqnarray*}%
Similarly%
\begin{equation*}
(g\circ f)(x,z)\leq (g^{\prime }\circ f^{\prime })(x,z)+\epsilon
+\varepsilon \text{.}
\end{equation*}%
It follows from this statement that 
\begin{equation*}
d_{\mathcal{M}}(X,Z)(g\circ f,g\circ f^{\prime })\leq d_{\mathcal{M}%
}(X,Y)(f,f^{\prime })+d_{\mathcal{M}}(Y,Z)(g,g^{\prime })
\end{equation*}%
as required.
\end{proof}

We can also provide the collection of sets ${\mathcal{M}}(X,Y)$ with an AC
structure. If $X,Y,Z$ are three metric spaces, define for $f\in {\mathcal{M}}%
(X,Y)$, $g\in {\mathcal{M}}(Y,Z)$ and $h\in {\mathcal{M}}(X,Z)$ the distance 
\begin{equation*}
d(f,g,h):= \sup _{x\in X, z\in Z} \left| h(x,z) - \inf _{y\in Y}
(f(x,y)+g(y,z)) \right| . 
\end{equation*}
Write $X\overset{f}{\dashrightarrow} Y$ if $f\in {\mathcal{M}}(X,Y)$.

\begin{lemma}
The condition $d(f,g,h)<\epsilon $ is equivalent to the conjunction of the
following two conditions: \newline
(d1)---for any $x,y,z$ we have 
\begin{equation*}
h(x,z)\leq f(x,y)+g(y,z)+\epsilon 
\end{equation*}
and \newline
(d2)---for any $x,z$ there exists $y\in Y$ with%
\begin{equation*}
f(x,y)+g(y,z)\leq h(x,z)+\epsilon \text{.}
\end{equation*}
\end{lemma}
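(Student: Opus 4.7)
The statement is a direct unpacking of the definition of $d(f,g,h)$ as a supremum over $(x,z) \in X \times Z$ of $|h(x,z) - I(x,z)|$, where I abbreviate $I(x,z) := \inf_{y \in Y}(f(x,y)+g(y,z))$. The plan is to split the scalar inequality $d(f,g,h) < \epsilon$ into its two sides $h(x,z) - I(x,z) < \epsilon$ and $I(x,z) - h(x,z) < \epsilon$, and then match each of these with one of the conditions (d1) and (d2).

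For the forward direction, I would combine the upper bound $h(x,z) < I(x,z) + \epsilon$ with the trivial pointwise inequality $I(x,z) \leq f(x,y) + g(y,z)$ (valid for every $y \in Y$) to obtain (d1). For (d2), the bound $I(x,z) < h(x,z) + \epsilon$ is precisely the statement that $h(x,z)+\epsilon$ lies strictly above the infimum; by the defining property of the infimum there must then exist a witness $y \in Y$ with $f(x,y) + g(y,z) \leq h(x,z) + \epsilon$.

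For the converse, (d1) rewrites as $f(x,y) + g(y,z) \geq h(x,z) - \epsilon$ for every $y$, so passing to the infimum in $y$ yields $I(x,z) \geq h(x,z) - \epsilon$. Meanwhile the witness furnished by (d2) gives $I(x,z) \leq f(x,y) + g(y,z) \leq h(x,z) + \epsilon$. Combining these two estimates produces $|h(x,z) - I(x,z)| \leq \epsilon$ uniformly in $(x,z)$, so the supremum defining $d(f,g,h)$ satisfies the same bound.

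The only bookkeeping point worth flagging is the reconciliation between the strict inequality in $d(f,g,h)<\epsilon$ and the non-strict inequalities in (d1), (d2); this is handled either by observing that the forward direction actually produces strict bounds, or by running the converse with an arbitrary $\epsilon' < \epsilon$ in place of $\epsilon$. There is no real obstacle — the proof is essentially a translation between two standard ways of expressing the same $\sup$–$\inf$ estimate.
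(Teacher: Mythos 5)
Your proof is correct and is exactly the computation the paper has in mind: the paper's own ``proof'' merely says it is similar to the technique of the preceding proposition, namely unpacking the $\sup$ of $|h - \inf_y(f+g)|$ into the two one-sided pointwise estimates, which is what you carry out in full. Your remark about the mismatch between the strict inequality $d(f,g,h)<\epsilon$ and the non-strict bounds in (d1)--(d2) is a fair observation about the statement itself (the converse as written only yields $d(f,g,h)\leq\epsilon$), and you handle it appropriately.
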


\begin{proof}
This is similar to the technique used in the previous proof.
\end{proof}

\begin{corollary}
The above distance satisfies the axioms for an AC structure. Furthermore, it
is absolutely transitive. It is the AC structure associated to the metrized
category $\mathbf{MetCor}$.
\end{corollary}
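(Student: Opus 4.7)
The plan is to reduce everything to results already established. The key observation I would make first is that the three-variable distance has a clean reformulation in terms of the composition and the arrow metric on $\mathbf{MetCor}$: namely, unwinding the definition,
\begin{equation*}
d(f,g,h) = \sup_{x\in X,\, z\in Z}\bigl| h(x,z) - (g\circ f)(x,z)\bigr| = d_{\mathcal{M}(X,Z)}(g\circ f,\, h).
\end{equation*}
This is precisely the formula (\ref{mc2ac}) defining the AC structure attached to a metrized category, applied to $\mathbf{MetCor}$ (which was shown to be a metrized category in Proposition \ref{mcmc}). Establishing this identification is the only substantive step; once it is in hand, the third claim of the corollary is literally its content.

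Given that identification, the AC axioms (identities and left/right associativity) follow immediately from Proposition \ref{mcac} applied to $\mathbf{MetCor}$, so no further verification is needed there. To make the reader comfortable, I would also quickly mention the two auxiliary conditions (d1) and (d2) from the preceding lemma, to point out that (d1) is really the triangle inequality for $h$ with the infimum split open, and (d2) is the assertion that the infimum defining $g\circ f$ is essentially realized; together they say exactly $|h - g\circ f|_\infty \leq \epsilon$.

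For absolute transitivity, the idea is to use Lemma \ref{epcattrans}. I would first observe that the AC structure on $(\mathbf{MetCor},d)$ is in fact $0$-categoric: given $f\in\mathcal{M}(X,Y)$ and $g\in\mathcal{M}(Y,Z)$, the composition $h:=g\circ f$ constructed in Proposition \ref{mcmc} lies in $\mathcal{M}(X,Z)$ and satisfies $d(f,g,h)=0$ by the reformulation above. In particular it is $\epsilon$-categoric for every $\epsilon>0$, and Lemma \ref{epcattrans} then yields absolute left and right transitivity without further work.

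The main (and really only) obstacle is therefore the first identification; everything else is citation. That identification itself is essentially tautological, but one should check that the supremum is finite so that $d(f,g,h)$ is a well-defined real number: this follows because all three correspondences are bounded by hypothesis and, by Proposition \ref{mcmc}, $g\circ f$ is a bounded element of $\mathcal{M}(X,Z)$. After that, I would simply note that the proof is complete.
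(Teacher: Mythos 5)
Your proposal is correct and follows essentially the same route as the paper, which simply cites Proposition \ref{mcmc} and Proposition \ref{mcac} for the AC axioms and Lemma \ref{epcattrans} for absolute transitivity; you have merely made explicit the two steps the paper leaves implicit, namely the identification $d(f,g,h)=d_{\mathcal{M}(X,Z)}(g\circ f,h)$ and the observation that the structure is $0$-categoric (hence $\epsilon$-categoric for all $\epsilon>0$) so that Lemma \ref{epcattrans} applies.
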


\begin{proof}
This follows from Propositions \ref{mcmc} and \ref{mcac}. Absolute
transitivity follows from Lemma \ref{epcattrans}.
\end{proof}

We can more generally define, for any $k>0$, the $k$-contractive metric
correspondences ${\mathcal{M}}(X,Y;k)$. For this, we keep the second
condition the same but modify the first condition so it says \newline
(MC1)---for any $x,x^{\prime }\in X$ and $y\in Y$ we have%
\begin{equation*}
f(x,y)\leq kd_{X}(x,x^{\prime })+f(x^{\prime },y)\text{;}
\end{equation*}
\newline
(MC2)---for any $x\in X$ and $y,y^{\prime }\in Y$ we get%
\begin{equation*}
f(x,y)\leq f(x,y^{\prime })+d_{Y}(y,y^{\prime })\text{.}
\end{equation*}

Again, the functionality condition (MF) is the same as before. Notice that
the identity $i_{X}$ will be in here only if $k\geq 1$ and furthermore if $f$
is $k$-contractive then we would need $k\leq 1$ in order to get $%
d(i_{X},f,f)=0$.

It will undoubtedly be interesting to try to iterate the composition (to be
defined in the proof of transitivity above) of $k$-contractive
correspondences and to study convergence of the iterates.

\section{The Yoneda functors}

\label{yoneda}

Suppose $(X,A,d)$ is an AC structure, such that each $A(x,y)$ is a bounded
metric space. Choose $u\in X$. Then we would like to define a
\textquotedblleft Yoneda functor\textquotedblright\ $x\mapsto A(u,x)$ from $%
(X,A,d)$ to the AC structure of metric correspondences. Put 
\begin{equation*}
Y_{u}(x):=(A(u,x),\mathrm{dist}_{A(u,x)})
\end{equation*}%
where the distance $\mathrm{dist}_{A(u,x)}$ is the distance $\phi $ coming
from $d$ as in Section \ref{metarrowsets}. We assume the separation axiom of
Definition \ref{separated}, so $Y_{u}(x)$ is a metric space and it is
bounded by assumption.

Make the following hypothesis.

\begin{hypothesis}
\label{graphcomp} Whenever $A(x,y)$ and $A(y,z)$ are nonempty, then $A(x,z)$
is nonempty too.
\end{hypothesis}

For any $f\in A(x,y)$ define $Y_u(f)\in {\mathcal{M}}(Y_u(x), Y_u(y))$ by 
\begin{equation*}
Y_u(f) (a,b):= d(a,f,b). 
\end{equation*}

\begin{lemma}
Assuming Hypothesis \ref{graphcomp}, if $f\in A(x,y)$ then%
\begin{equation*}
Y_{u}(f)\in {\mathcal{M}}(Y_{u}(x),Y_{u}(y))\text{.}
\end{equation*}
\end{lemma}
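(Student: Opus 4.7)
The plan is to verify the three axioms (MC0), (MC1), (MC2) together with the boundedness requirement, each of which follows directly from structural properties of $d$ and $\phi$ that have already been established in the excerpt.

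First I would handle (MC0): if $Y_u(x) = A(u,x)$ is nonempty, then since by assumption $f \in A(x,y)$ the set $A(x,y)$ is nonempty, so Hypothesis \ref{graphcomp} applied to the chain $u \to x \to y$ forces $A(u,y) = Y_u(y)$ to be nonempty. Next, axiom (MC1) reads $d(a,f,b) \leq \phi(a,a') + d(a',f,b)$ for $a,a' \in A(u,x)$ and $b \in A(u,y)$; this is exactly Lemma \ref{phileft} after renaming (take its $f,f',h,c$ to be our $a,a',f,b$ respectively). Axiom (MC2) reads $d(a,f,b) \leq d(a,f,b') + \phi(b,b')$, which is Lemma \ref{phicomp} after the analogous renaming (its $f,g,c,c'$ become our $a,f,b,b'$).

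It remains to check that $Y_u(f)$ is bounded as a function on $A(u,x) \times A(u,y)$. Non-negativity $d(a,f,b) \geq 0$ is already established in general for AC structures, giving the lower bound. For the upper bound, in the non-trivial case where both $A(u,x)$ and $A(u,y)$ are nonempty, fix once and for all some $a_0 \in A(u,x)$ and $b_0 \in A(u,y)$; then Corollary \ref{dphicorollary} applied to the triples $(a,f,b)$ and $(a_0,f,b_0)$ gives
\begin{equation*}
d(a,f,b) \leq d(a_0,f,b_0) + \phi(a,a_0) + \phi(b,b_0),
\end{equation*}
and the right-hand side is bounded uniformly in $a,b$ because the metric spaces $A(u,x)$ and $A(u,y)$ are assumed bounded. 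If instead $A(u,x)$ is empty, then $Y_u(f)$ is the empty function and there is nothing to check.

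I do not expect any serious obstacle here: the entire lemma is essentially a repackaging of the three one-sided Lipschitz estimates (Lemmas \ref{phileft}, \ref{phiright}, \ref{phicomp}) in the language of metric correspondences, together with Hypothesis \ref{graphcomp} to guarantee the nonemptiness condition and Corollary \ref{dphicorollary} to upgrade those estimates to boundedness. The only mildly subtle point is remembering that boundedness must be checked separately from the Lipschitz-type inequalities, since the latter only control differences of values of $Y_u(f)$, not their absolute magnitudes.
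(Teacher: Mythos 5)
Your proof is correct and follows essentially the same route as the paper: (MC0) from Hypothesis \ref{graphcomp}, (MC1) from Lemma \ref{phileft}, and (MC2) from Lemma \ref{phicomp}. Your additional verification of boundedness of $Y_u(f)$ via Corollary \ref{dphicorollary} and the assumed boundedness of the arrow sets is a point the paper's proof silently omits, and it is a worthwhile addition since boundedness is part of the definition of a metric correspondence.
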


\begin{proof}
We need to show (MC0), (MC1) and (MC2). Suppose $Y_u(x)=A(u,x)$ is nonempty.
Then by Hypothesis \ref{graphcomp}, $Y_u(y)=A(u,y)$ is also nonempty, giving
(MC0).

Suppose $a,a^{\prime }\in Y_{u}(x)=A(u,x)$ and $b\in Y_{u}(y)=A(u,y)$. We
have 
\begin{eqnarray*}
Y_{u}(f)(a,b) &=&d(a,f,b)\leq d_{A(u,x)}(a,a^{\prime })+d(a^{\prime },f,b) \\
&=&d_{A(u,x)}(a,a^{\prime })+Y_{u}(f)(a^{\prime },b)
\end{eqnarray*}%
by Lemma \ref{phileft}, giving (MC1).

Suppose $a\in Y_{u}(x)=A(u,x)$ and $b,b^{\prime }\in Y_{u}(y)=A(u,y)$, then 
\begin{eqnarray*}
Y_{u}(f)(a,b) &=&d(a,f,b)\leq d_{A(u,y)}(b,b^{\prime })+d(a,f,b^{\prime }) \\
&=&d_{A(u,y)}(b,b^{\prime })+Y_{u}(f)(a,b^{\prime })
\end{eqnarray*}%
by Lemma \ref{phicomp}, giving (MC2).
\end{proof}

\begin{proposition}
\label{abstransYoneda} Suppose $(X,A,d)$ satisfies absolute left
transitivity (Definition \ref{abstrans}). Then the Yoneda map $Y_u$ defined
above is a functor (with contractivity constant $1$).
\end{proposition}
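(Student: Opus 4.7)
The plan is to verify the two ingredients of being $1$-functorial: unitality, i.e.\ $Y_u(1_x) = i_{Y_u(x)}$, and the bound
$$d(Y_u(f), Y_u(g), Y_u(h)) \leq d(f, g, h)$$
for all $f \in A(x,y)$, $g \in A(y,z)$, $h \in A(x,z)$. Unitality is immediate: by definition $Y_u(1_x)(a, a') = d(a, 1_x, a')$ for $a, a' \in A(u, x)$, and the first lemma of Section \ref{metarrowsets} gives $d(a, 1_x, a') = d(1_u, a, a') = \phi(a, a')$, which is exactly the metric on $Y_u(x)$ and hence the value of $i_{Y_u(x)}(a, a')$.

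For the contractive estimate, I would unpack the definition of $d$ on ${{\mathcal{M}}}$: the claim reduces to showing, for every $a \in A(u, x)$ and $c \in A(u, z)$, that
$$\left| d(a, h, c) - \inf_{b \in A(u, y)} (d(a, f, b) + d(b, g, c)) \right| \leq d(f, g, h).$$
The infimum is taken over a nonempty set by Hypothesis \ref{graphcomp}, since $a \in A(u, x)$ and $f \in A(x, y)$ force $A(u, y) \neq \emptyset$. Removing the absolute value produces two one-sided inequalities.

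The first, $d(a, h, c) \leq \inf_b (d(a, f, b) + d(b, g, c)) + d(f, g, h)$, follows directly from right associativity for the tetrahedron with base points $u, x, y, z$ and arrows $(a, f, g; b, h; c)$: for every $b \in A(u, y)$ this yields $d(a, h, c) \leq d(a, f, b) + d(f, g, h) + d(b, g, c)$, and taking the infimum over $b$ closes this direction. The opposite inequality $\inf_b (d(a, f, b) + d(b, g, c)) \leq d(a, h, c) + d(f, g, h)$ is where absolute left transitivity is essential, and the main subtlety lies in matching symbols. Specifically, I would apply Definition \ref{abstrans} with base points $u, x, y, z$, identifying the quintuple $(f', g', h', k', l')$ appearing there with $(a, f, g, h, c)$, so that the bound variable $a'$ runs over $A(u, y)$; the transitivity inequality (with $\alpha \equiv 1$) then reads \emph{verbatim} as the desired bound. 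Once this correspondence is spotted the argument is complete; I expect the whole proof to be short, with the only genuine obstacle being the bookkeeping needed to recognize the correct instantiation of the transitivity axiom.
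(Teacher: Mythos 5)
Your proposal is correct and follows essentially the same route as the paper: unpack $d_{{\mathcal{M}}}$ into the two one-sided inequalities, obtain the first from the (right) associativity axiom applied to the tetrahedron $(a,f,g;b,h;c)$, recognize the second as a verbatim instance of absolute left transitivity with the quintuple $(a,f,g,h,c)$, and check unitality via $d(a,1_x,a')=\phi(a,a')$. The only cosmetic difference is that the paper does not name which associativity axiom yields the first inequality, whereas you correctly pin it down.
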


\begin{proof}
Suppose $x,y,z\in X$ and $f\in A(x,y)$, $g\in A(y,z)$ and $h\in A(x,z)$. We
would like to show 
\begin{equation*}
d_{{\mathcal{M}}(x,z)}(Y_{u}(g)\circ Y_{u}(f),Y_{u}(h))\leq d(f,g,h).
\end{equation*}%
We have for $a\in Y_{u}(x)=A(u,x)$ and $c\in Y_{u}(z)=A(u,z)$, 
\begin{equation*}
Y_{u}(g)\circ Y_{u}(f)(a,c)=\inf_{b\in A(u,y)}(Y_{u}(g)(b,c)+Y_{u}(f)(a,b)).
\end{equation*}%
Note that by Hypothesis \ref{graphcomp}, given $a$ and $f$ it follows that $%
A(u,y)$ is nonempty so the $\inf $ exists. Now 
\begin{equation*}
d_{{\mathcal{M}}(x,z)}(Y_{u}(g)\circ Y_{u}(f),Y_{u}(h))=
\end{equation*}%
\begin{equation*}
\sup_{a\in A(u,x),c\in A(u,z)}\left\vert Y_{u}(h)(a,c)-Y_{u}(g)\circ
Y_{u}(f)(a,c)\right\vert 
\end{equation*}%
\begin{equation*}
=\sup_{a\in A(u,x),c\in A(u,z)}\left\vert Y_{u}(h)(a,c)-\inf_{b\in
A(u,y)}(Y_{u}(g)(b,c)+Y_{u}(f)(a,b))\right\vert 
\end{equation*}%
\begin{equation*}
=\sup_{a\in A(u,x),c\in A(u,z)}\left\vert d(a,h,c)-\inf_{b\in
A(u,y)}(d(b,g,c)+d(a,f,b))\right\vert .
\end{equation*}%
We would like to show that this is $\leq d(f,g,h)$. This is equivalent to
asking that for all $a\in A(u,x)$ and $c\in A(u,z)$ we should have 
\begin{equation}
d(a,h,c)-\inf_{b\in A(u,y)}(d(b,g,c)+d(a,f,b))\leq d(f,g,h)  \label{y1}
\end{equation}%
and 
\begin{equation}
\inf_{b\in A(u,y)}(d(b,g,c)+d(a,f,b))-d(a,h,c)\leq d(f,g,h).  \label{y2}
\end{equation}%
In turn, the first one \eqref{y1} is equivalent to 
\begin{equation*}
d(a,h,c)\leq d(f,g,h)+\inf_{b\in A(u,y)}(d(b,g,c)+d(a,f,b))
\end{equation*}%
and this is true by the tetrahedral inequality 
\begin{equation*}
d(a,h,c)\leq d(f,g,h)+d(b,g,c)+d(a,f,b)
\end{equation*}%
for any $b$. The second one \eqref{y2} is equivalent to 
\begin{equation*}
\inf_{b\in A(u,y)}(d(b,g,c)+d(a,f,b))\leq d(f,g,h)+d(a,h,c),
\end{equation*}%
but that is exactly the statement of the absolute left transitivity
condition of Definition \ref{abstrans}. Thus under our hypothesis, \eqref{y2}
is true. We obtain the required inequality.

For the identities, we need to know that $Y_u(1_x)= i_{Y_u(x)}$. Recall that
the identity $i_{Y_u(x)}$ in ${\mathcal{M}}( Y_u(x),Y_u(x))$ is just the
distance function $d_{Y_u(x)}$, and $Y_u(x)=A(u,x)$. Its distance function
is 
\begin{equation*}
d_{Y_u(x)} (f,f^{\prime })=d(f,1_x,f^{\prime }) 
\end{equation*}
by the discussion of Section \ref{metarrowsets}, and in turn this is exactly 
$Y_u(1_x)$. This shows that $Y_u$ preserves identities, and completes the
proof that $Y_u$ is a functor with contractivity constant $1$.
\end{proof}

We can similarly define Yoneda functors in the other direction 
\begin{equation*}
Y^{u}(x):=A(x,u)
\end{equation*}
with the same properties. The opposed statement of the previous proposition
says

\begin{proposition}
Suppose $(X,A,d)$ satisfies absolute right transitivity (Definition \ref%
{abstrans}). Then the Yoneda map $Y^u$ is a functor (with contractivity
constant $1$).
\end{proposition}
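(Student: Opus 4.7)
The plan is to mirror the argument for Proposition \ref{abstransYoneda} with arrows reversed, replacing left transitivity by right transitivity. Since $Y^u$ is contravariant, an arrow $f\in A(x,y)$ should yield a metric correspondence from $Y^u(y)=A(y,u)$ to $Y^u(x)=A(x,u)$; the natural formula is
\begin{equation*}
Y^u(f)(b,a) := d(f,b,a) \quad\text{for } b\in A(y,u),\; a\in A(x,u).
\end{equation*}
First I would verify that $Y^u(f)$ lies in ${\mathcal M}(A(y,u),A(x,u))$. The non-emptiness condition (MC0) follows from Hypothesis \ref{graphcomp}: if $A(y,u)$ is nonempty, then since $A(x,y)$ contains $f$, $A(x,u)$ is also nonempty. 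For (MC1) in the variable $b$, Lemma \ref{phiright} (which controls variation of $d$ in the middle slot) gives $d(f,b,a)\leq d_{A(y,u)}(b,b')+d(f,b',a)$. For (MC2) in the variable $a$, Lemma \ref{phicomp} (which controls variation of $d$ in the third slot) gives $d(f,b,a)\leq d(f,b,a')+d_{A(x,u)}(a,a')$.

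Next I would establish the functor inequality. Given $f\in A(x,y)$, $g\in A(y,z)$, $h\in A(x,z)$, the composite in $\mathbf{MetCor}$ is
\begin{equation*}
(Y^u(f)\circ Y^u(g))(c,a) = \inf_{b\in A(y,u)}\bigl(d(g,c,b)+d(f,b,a)\bigr),
\end{equation*}
and the goal is the supremum bound $|d(h,c,a) - \inf_b(d(g,c,b)+d(f,b,a))|\leq d(f,g,h)$. I would split this into the two one-sided inequalities. The inequality
\begin{equation*}
d(h,c,a)\leq d(f,g,h)+d(g,c,b)+d(f,b,a)
\end{equation*}
holds for every $b$ by left associativity applied to the tetrahedron $(f,g,c;h,b;a)$ over the sequence $x\to y\to z\to u$, and hence passes to the infimum over $b$. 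The reverse inequality
\begin{equation*}
\inf_{b\in A(y,u)}\bigl(d(g,c,b)+d(f,b,a)\bigr) \leq d(f,g,h)+d(h,c,a)
\end{equation*}
is precisely the absolute right transitivity condition of Definition \ref{abstrans} applied with $w=u$ and the arrows $f,g,c,h,a$ playing the roles of $f,g,h,k,l$.

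Finally I would check that identities are preserved. By the discussion in Section \ref{metarrowsets}, the identity $i_{Y^u(x)}$ in ${\mathcal M}(A(x,u),A(x,u))$ is the distance function $\phi(a,a')=d(1_x,a,a')$, which is exactly $Y^u(1_x)(a,a')$. I do not anticipate a genuine obstacle: the proof is formally dual to that of Proposition \ref{abstransYoneda}, and the main point to take care over is matching the ordering of variables in Definition \ref{abstrans} to the contravariant composition $Y^u(f)\circ Y^u(g)$, which reverses the role of source and target relative to the covariant case.
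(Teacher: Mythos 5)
Your proof is correct and is exactly the dualization the paper intends: the paper gives no details here beyond ``the proof is similar'' to Proposition \ref{abstransYoneda}, and you carry out that mirror argument with the right bookkeeping (the contravariant correspondence $Y^u(f)(b,a)=d(f,b,a)$, Lemmas \ref{phiright} and \ref{phicomp} for (MC1)--(MC2), left associativity for one side of the bound, and absolute right transitivity with $f,g,c,h,a$ in the roles of $f,g,h,k,l$ for the other). No gaps.
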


The proof is similar.

\section{Functors to metrized categories}

\label{fmc}

Suppose $(X,A,d)$ is an AC structure. We would like to look at functors $%
F:(X,A,d)\rightarrow ({{\mathcal{C}}} , d_{{{\mathcal{C}}}} )$ to metrized
categories.

First, we consider the \emph{free category} on $(X,A)$. Let $\mathbf{Free}%
(X,A)$ denote the free category on the graph $(X,A)$. Thus, the set of
objects of $\mathbf{Free}(X,A)$ is equal to $X$ and 
\begin{equation*}
\mathbf{Free}(X,A)(x,y):=
\end{equation*}%
\begin{equation*}
\{(x_{0},\ldots ,x_{k};a_{1},\ldots ,a_{k}):x_{i}\in
X,\;\;x_{0}=x,x_{k}=y,a_{i}\in A(x_{i-1},x_{i})\}.
\end{equation*}%
Arrows will be denoted just $(a_{1},\ldots ,a_{k})$ if there is no
confusion. Composition is by concatenation and the identity of $x$ in $%
\mathbf{Free}(X,A)$ is the sequence $()_{x}$ of length $k=0$ based at $%
x_{0}=x$.

Suppose given a functor $F:(X,A,d)\rightarrow ({{\mathcal{C}}},d_{{{\mathcal{%
C}}}})$. In particular, $F:X\rightarrow \mathrm{Ob}({{\mathcal{C}}})$ and
for any $x,y\in X$ we have $F:A(x,y)\rightarrow {{\mathcal{C}}}(Fx,Fy)$.
This induces a functor of usual categories 
\begin{equation*}
\mathbf{Free}(F):\mathbf{Free}(X,A)\rightarrow {{\mathcal{C}}}
\end{equation*}%
defined by sending a sequence $(a_{1},\ldots ,a_{k})$ to the composition $%
F(a_{k})\circ \cdots \circ F(a_{1})$ and sending $()_{x}$ to $1_{Fx}$.
Pulling back the distance on ${{\mathcal{C}}}$ we obtain a pseudometric $%
d_{F}$ on $\mathbf{Free}(X,A)$, which is to say for each $x,y$ we have a
distance $d_{F}$ on $\mathbf{Free}(X,A)(x,y)$. These distances form a
structure of pseudometrized category on $\mathbf{Free}(X,A)$, although the
separation axiom doesn't hold.

Now $F$ is a functor of AC structures, if and only if 
\begin{equation*}
d_{{{\mathcal{C}}}}(F(g)\circ F(f),F(h))\leq d(f,g,h)
\end{equation*}%
for any $f\in A(x,y)$, $g\in A(y,z)$ and $h\in A(x,z)$ and also if
\linebreak\ $d_{{{\mathcal{C}}}}(F(1_{x}),1_{Fx})=0$. These conditions are
equivalent to the following conditions on the pullback distance $d_{F}$: 
\begin{equation*}
d_{F}((f,g),(h))\leq d(f,g,h)\mbox{ and }d_{F}((1_{x}),()_{x})=0.
\end{equation*}%
Let $\mathrm{dfunct}(X,A,d)$ denote the set of pseudo-metric structures $%
d_{F}$ on $\mathbf{Free}(X,A)$ satisfying these conditions, we call these
elements \emph{functorial distances}.

\begin{proposition}
Assume Hypothesis \ref{graphcomp}. For any two arrows $a$ and $b$ in $%
\mathbf{Free}(X,A)(x,y)$, the set of values of $d_F(a,b)$ over all $d_F\in 
\mathrm{dfunct}(X,A,d)$ is bounded. Therefore we may set 
\begin{equation*}
d^{\mathrm{max}}(a,b):= \sup _{d_F\in \mathrm{dfunct}(X,A,d)} d_F(a,b), 
\end{equation*}
and $d^{\mathrm{max}}\in \mathrm{dfunct}(X,A,d)$ is the unique maximal
functorial distance.
\end{proposition}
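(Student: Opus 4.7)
The proof has two parts: bounding the supremum, and verifying that $d^{\mathrm{max}}$ itself lies in $\mathrm{dfunct}(X,A,d)$.

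For boundedness, my plan is to construct, for each pair $a,b \in \mathbf{Free}(X,A)(x,y)$, an explicit finite bound $B(a,b)$ depending only on $(X,A,d)$ such that $d_F(a,b) \leq B(a,b)$ for every $F \in \mathrm{dfunct}(X,A,d)$. By the triangle inequality in $d_F$ it suffices to bound $d_F(a,(h))$ where $h\in A(x,y)$ is a fixed single arrow; such $h$ exists by iterated application of Hypothesis~\ref{graphcomp} to the source and target sequence of $a$. I proceed by induction on the length $k$ of $a=(a_1,\ldots,a_k)$ with $a_i\in A(x_{i-1},x_i)$. For $k=0$ (so $x=y$) the bound is $\phi(1_x,h)$, since $F$ is unital. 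For $k=1$ the bound is $\phi(a_1,h)=d(1_x,a_1,h)$. For $k\geq 2$, choose by Hypothesis~\ref{graphcomp} an arrow $c\in A(x,x_{k-1})$, and apply triangle inequality:
\[
d_F(a,(h)) \leq d_F\bigl((a_1,\ldots,a_k),(c,a_k)\bigr) + d_F\bigl((c,a_k),(h)\bigr).
\]
The second term is $\leq d(c,a_k,h)$ by the functoriality condition on $d_F$. For the first term, non-expansiveness of composition in the pseudo-metrized category structure induced by $d_F$ on $\mathbf{Free}(X,A)$ yields
\[
d_F\bigl((a_1,\ldots,a_k),(c,a_k)\bigr) \leq d_F\bigl((a_1,\ldots,a_{k-1}),(c)\bigr),
\]
which is controlled by the inductive hypothesis. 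Setting $B(a,b):=B(a,(h))+B((h),b)$ gives the required universal bound.

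For the second part, each defining property of $\mathrm{dfunct}(X,A,d)$ is preserved under taking a supremum over a family. The pseudometric axioms on each $\mathbf{Free}(X,A)(x,y)$ survive because reflexivity, symmetry and the triangle inequality $\sup_F d_F(a,c)\leq \sup_F(d_F(a,b)+d_F(b,c))\leq \sup_F d_F(a,b)+\sup_F d_F(b,c)$ pass through. Non-expansiveness of composition holds because for every $F$,
\[
d_F(b\cdot a,b'\cdot a') \leq d_F(a,a')+d_F(b,b') \leq d^{\mathrm{max}}(a,a')+d^{\mathrm{max}}(b,b'),
\]
and the supremum of the left side remains bounded by the (now $F$-independent) right side. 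The functoriality inequalities $d^{\mathrm{max}}((f,g),(h))\leq d(f,g,h)$ and $d^{\mathrm{max}}((1_x),()_x)=0$ follow immediately, since each $d_F$ satisfies them and the right-hand sides are constants independent of $F$. Uniqueness of the maximum element is then tautological.

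The main obstacle is the boundedness step: one has to trace the recursion carefully and verify that the non-expansiveness of composition in the target metrized category $\mathcal{C}$ really reduces the length-$k$ case to the length-$(k-1)$ case while only introducing terms of the form $d(c,a_k,h)$ whose suprema are finite by the assumption that $d$ takes finite real values. The essential role of Hypothesis~\ref{graphcomp} is precisely to guarantee existence of the intermediate arrows $c$ and $h$ at every stage of the recursion; without it, a functor $F$ could separate the images $F(a)$ and $F(b)$ arbitrarily far apart with no $A$-intrinsic obstruction, and the supremum defining $d^{\mathrm{max}}$ would be infinite.
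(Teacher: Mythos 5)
Your proposal is correct and follows essentially the same route as the paper: reduce to bounding $d_F(a,(h))$ for a fixed $h\in A(x,y)$ supplied by Hypothesis \ref{graphcomp}, induct on the word length using intermediate single arrows (your $c\in A(x,x_{k-1})$ is the paper's $h_{i-1}$), split each step by the triangle inequality into a composition-non-expansiveness term and a term bounded by $d(c,a_k,h)$ via functoriality, and then note that all defining axioms pass to the pointwise supremum.
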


\begin{proof}
Given that $\mathbf{Free}(X,A)(x,y)$ is non empty, Hypothesis \ref{graphcomp}
implies that $A(x,y)$ is nonempty, so we may fix some $h\in A(x,y)$. We have 
\begin{equation*}
d_{F}(a,b)\leq d_{F}(a,h)+d_{F}(h,b)\text{,}
\end{equation*}%
so it suffices to show that $d_{F}(a,h)$ is bounded (the case of $d_{F}(h,b)$
being the same by symmetry). Suppose $a$ is the composition of a sequence of
arrows $a_{i}\in A(x_{i-1},x_{i})$ with $x_{0}=x$ and $x_{n}=y$. Again by
our hypothesis, we may choose $h_{i}\in A(x_{0},x_{i})$ with $h_{n}=h$. We
show by induction on $i$ that $d_{F}((a_{i},\cdots ,a_{0}),h_{i})$ is
bounded as $F$ ranges over all functorial distances. This is true for $i=1$
since 
\begin{eqnarray*}
d_{F}(a_{1},h_{1}) &=&d_{F}((1_{x},a_{1}),h_{1}) \\
&\leq &d(1_{x},a_{1},h_{1})=\phi (a_{1},h_{1}).
\end{eqnarray*}%
Suppose it is known for $i-1$. Then 
\begin{eqnarray*}
d_{F}((a_{i},\cdots ,a_{0}),h_{i}) &=&d_{F}((a_{i},(a_{i-1},\cdots
,a_{0}),h_{i}) \\
&\leq &d_{F}((a_{i},h_{i-1}),h_{i})+d_{F}((a_{i-1},\cdots ,a_{0}),h_{i-1})
\end{eqnarray*}%
and 
\begin{equation*}
d_{F}((a_{i},h_{i-1}),h_{i})\leq d(a_{i},h_{i-1},h_{i})
\end{equation*}
whereas $d_{F}((a_{i-1},\cdots ,a_{0}),h_{i-1})$ is bounded by hypothesis.
This completes the induction step and we conclude for $i=n$ that $d_{F}(a,h)$
is bounded.

In view of the form of the axioms for a pseudometric structure on the
category $\mathbf{Free}(X,A)(x,y)$, a supremum of a family of pseudometric
structures, individually bounded on any pair of arrows, is again a
pesudometric structure. Also the supremum will satisfy the conditions for a
functorial distance. Therefore $d^{\mathrm{max}}\in \mathrm{dfunct}(X,A,d)$.
\end{proof}

We also denote by $d^{\mathrm{max}} (f,g,h)$ the value $d^{\mathrm{max}}
((f,g), (h))$.

\begin{remark}
\label{upperbound} We have the upper bound%
\begin{equation*}
d^{\mathrm{max}}(f,g,h)\leq d(f,g,h)\text{.}
\end{equation*}
\end{remark}

We would like to get a lower bound.

\begin{proposition}
\label{Clower} Suppose $F: (X,A,d)\rightarrow ({{\mathcal{C}}} ,d_{{{%
\mathcal{C}}}})$ is an AC-functor from $(X,A,d)$ to a metrized category $({{%
\mathcal{C}}} ,d_{{{\mathcal{C}}}})$. Assume Hypothesis \ref{graphcomp}.
Then we have 
\begin{equation*}
d_{{{\mathcal{C}}}}(F(g)\circ F(f),F(h)) \leq d^{\mathrm{max}} (f,g,h). 
\end{equation*}
There exists a functor on which this inequality is an equality for all $f,g,h
$.
\end{proposition}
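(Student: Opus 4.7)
The plan is to read the statement entirely through the description of $\mathrm{dfunct}(X,A,d)$ given just before the proposition, and then exploit the fact that $d^{\mathrm{max}}$ is by construction the pointwise supremum over that set. For the inequality, take an arbitrary AC-functor $F:(X,A,d)\to({{\mathcal{C}}},d_{{{\mathcal{C}}}})$, form the induced functor $\mathbf{Free}(F):\mathbf{Free}(X,A)\to{{\mathcal{C}}}$, and pull back the distance on ${{\mathcal{C}}}$ to a pseudo-metric $d_F$ on $\mathbf{Free}(X,A)$. The remarks preceding the proposition identify being an AC-functor with the two inequalities $d_F((f,g),(h))\leq d(f,g,h)$ and $d_F((1_x),()_x)=0$, that is with $d_F\in\mathrm{dfunct}(X,A,d)$. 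From the defining supremum we then obtain $d_F\leq d^{\mathrm{max}}$ pointwise. Specializing to the pair of arrows $((f,g),(h))$ gives
\begin{equation*}
d_{{{\mathcal{C}}}}(F(g)\circ F(f),F(h))=d_F((f,g),(h))\leq d^{\mathrm{max}}(f,g,h),
\end{equation*}
which is the first assertion.

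For the equality statement I would build the optimal functor directly out of $d^{\mathrm{max}}$ itself. Consider the pair $(\mathbf{Free}(X,A),d^{\mathrm{max}})$; since $d^{\mathrm{max}}$ is a pseudo-metric structure on the free category compatible with composition, it is a pseudo-metrized category in the sense of Section \ref{metrizedcategories}. Apply the quotient construction from the end of that section: let ${{\mathcal{C}}}$ be the category with $\mathrm{Ob}({{\mathcal{C}}})=X$ and morphism sets $\mathbf{Free}(X,A)(x,y)/\!\sim$ where $a\sim b$ iff $d^{\mathrm{max}}(a,b)=0$. By that construction $({{\mathcal{C}}},d_{{{\mathcal{C}}}})$ is a genuine metrized category (with the separation property), and the distance $d_{{{\mathcal{C}}}}$ satisfies $d_{{{\mathcal{C}}}}([a],[b])=d^{\mathrm{max}}(a,b)$.

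Now define $F:(X,A,d)\to({{\mathcal{C}}},d_{{{\mathcal{C}}}})$ on objects by the identity and on arrows by $F(f):=[(f)]$ for $f\in A(x,y)$. The fact that $d^{\mathrm{max}}$ is a functorial distance gives exactly the two conditions $d^{\mathrm{max}}((1_x),()_x)=0$ and $d^{\mathrm{max}}((f,g),(h))\leq d(f,g,h)$; the first makes $F$ unital after passing to the quotient, and the second, combined with the identity $F(g)\circ F(f)=[(f,g)]$ in ${{\mathcal{C}}}$, yields
\begin{equation*}
d_{{{\mathcal{C}}}}(F(g)\circ F(f),F(h))=d^{\mathrm{max}}((f,g),(h))=d^{\mathrm{max}}(f,g,h),
\end{equation*}
so $F$ is an AC-functor and realizes equality for all $f,g,h$.

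The only non-routine point is the passage to the quotient: one must know that the equivalence relation $d^{\mathrm{max}}=0$ is a congruence for composition in $\mathbf{Free}(X,A)$ and that $d^{\mathrm{max}}$ descends to an honest metric on ${{\mathcal{C}}}$. This, however, is precisely what is guaranteed by the compatibility axiom \eqref{mcaxiom} that $d^{\mathrm{max}}$ satisfies as a pseudo-metric structure, and is handled by the general construction at the end of Section \ref{metrizedcategories}. Thus the argument reduces to unravelling the definition of $d^{\mathrm{max}}$ and applying the pseudo-metrized to metrized quotient, with no genuine technical obstacle.
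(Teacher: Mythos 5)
Your proposal is correct and follows essentially the same route as the paper: the inequality is obtained tautologically from $d_F\in\mathrm{dfunct}(X,A,d)$ and the supremum defining $d^{\mathrm{max}}$, and equality is realized by the functor $f\mapsto[(f)]$ into the quotient of $(\mathbf{Free}(X,A),d^{\mathrm{max}})$ by the zero-distance relation, exactly the category ${{\mathcal{C}}}^{\mathrm{max}}$ of the paper's proof. Your closing remark that the congruence property is supplied by the axiom \eqref{mcaxiom} via the quotient construction at the end of Section \ref{metrizedcategories} is precisely the justification the paper invokes.
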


\begin{proof}
Let $d_{F}$ be the pullback distance induced by $F$. Then $d_{F}\in \mathrm{%
dfunct}(X,A,d)$. By the construction of $d^{\mathrm{max}}$ we have $%
d_{F}\leq d^{\mathrm{max}}$, so 
\begin{eqnarray*}
d_{{{\mathcal{C}}}}(F(g)\circ F(f),F(h)) &=&d_{F}((f,g),(h)) \\
&\leq &d^{\mathrm{max}}((f,g),h)=d^{\mathrm{max}}(f,g,h).
\end{eqnarray*}%
This shows the inequality.

Let ${{\mathcal{C}}}^{\mathrm{max}}$ be the metrized category obtained from $%
\mathbf{Free}(X,A)(x,y)$ by identifying arrows $a$ and $b$ whenever $d^{%
\mathrm{max}}(a,b)=0$, as described in the paragraph at the end of Section %
\ref{metrizedcategories}. This is a metrized category with distance induced
by $d^{\mathrm{max}}$, the distance is a metric and the map $f\mapsto (f)$
defines an AC functor $F^{\mathrm{max}}$ from $(X,A,d)$ to ${{\mathcal{C}}}^{%
\mathrm{max}}$. Tautologically, the inequality is an equality for this
functor.
\end{proof}

We now apply the previous corollary to the Yoneda functors. Assume that $%
(X,A,d)$ satisfies absolute transitivity, i.e. transitivity for $\alpha =1$.
Then we have seen that the Yoneda constructions define AC functors $Y_{u}$
from $(X,A,d)$ to the metrized category $\mathbf{MetCor}$ of metric
correspondences with 
\begin{equation*}
Y_{u}(x)=(A(u,x),\varphi _{u,x})
\end{equation*}%
and for $a\in A(x,y)$, 
\begin{equation*}
Y_{u}(a)\in {\mathcal{M}}(Y_{u}(x),Y_{u}(y))\mbox{  with  }%
Y_{u}(f)(a,b):=d(a,f,b).
\end{equation*}%
If $(X,A,d)$ is absolutely transitive, then $Y_{u}$ is an AC functor
(Proposition \ref{abstransYoneda}).

\begin{corollary}
Suppose $(X,A,d)$ is absolutely transitive and satisfies Hypothesis \ref%
{graphcomp}. Then for any $u,x,y,z$ and $a\in A(u,x)$, $b\in A(u,y)$, $c\in
A(u,z)$ and $f\in A(x,y)$, $g\in A(y,z)$ and $h\in A(x,z)$ we have 
\begin{equation*}
d(a,h,c)\leq d^{\mathrm{max}}(f,g,h)+d(a,f,b) +d(b,g,c). 
\end{equation*}
For any $a,c,f,g,h$ as above, 
\begin{equation*}
\inf _{b\in A(u,y)}(d(a,f,b) + d(b,g,c)) \leq d^{\mathrm{max}}(f,g,h) +
d(a,h,c). 
\end{equation*}
\end{corollary}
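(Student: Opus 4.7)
The plan is to apply Proposition \ref{Clower} to the Yoneda functor $Y_u\colon (X,A,d)\to \mathbf{MetCor}$, whose existence as an AC functor (with contractivity constant $1$) is guaranteed by Proposition \ref{abstransYoneda} together with absolute transitivity. Proposition \ref{Clower} then gives, for any $f\in A(x,y)$, $g\in A(y,z)$ and $h\in A(x,z)$, the bound
\begin{equation*}
d_{\mathbf{MetCor}}\bigl(Y_u(g)\circ Y_u(f),\, Y_u(h)\bigr)\ \leq\ d^{\mathrm{max}}(f,g,h).
\end{equation*}
Hypothesis \ref{graphcomp} is invoked once to ensure $A(u,y)$ is nonempty whenever $A(u,x)$ is (so the composition in $\mathbf{MetCor}$ is defined via an infimum over a nonempty set), and this is precisely the hypothesis already used in Proposition \ref{abstransYoneda}.

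Next I would unwind the left-hand side using the definitions from Sections \ref{metriccorrespondences} and \ref{yoneda}. By definition, $Y_u(h)(a,c)=d(a,h,c)$ and
\begin{equation*}
\bigl(Y_u(g)\circ Y_u(f)\bigr)(a,c)\ =\ \inf_{b\in A(u,y)}\bigl(Y_u(f)(a,b)+Y_u(g)(b,c)\bigr)\ =\ \inf_{b\in A(u,y)}\bigl(d(a,f,b)+d(b,g,c)\bigr),
\end{equation*}
and the distance $d_{\mathbf{MetCor}}$ is the supremum over $a\in A(u,x)$, $c\in A(u,z)$ of the absolute value of the difference of these two functions. Hence for every fixed $a\in A(u,x)$ and $c\in A(u,z)$ we obtain
\begin{equation*}
\Bigl| d(a,h,c)-\inf_{b\in A(u,y)}\bigl(d(a,f,b)+d(b,g,c)\bigr)\Bigr|\ \leq\ d^{\mathrm{max}}(f,g,h).
\end{equation*}

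Splitting the absolute value into its two sides immediately yields both statements. Dropping the negative sign in one direction gives
\begin{equation*}
d(a,h,c)\ \leq\ d^{\mathrm{max}}(f,g,h)+\inf_{b\in A(u,y)}\bigl(d(a,f,b)+d(b,g,c)\bigr)\ \leq\ d^{\mathrm{max}}(f,g,h)+d(a,f,b)+d(b,g,c)
\end{equation*}
for any particular choice of $b\in A(u,y)$, which is the first inequality. The other direction gives
\begin{equation*}
\inf_{b\in A(u,y)}\bigl(d(a,f,b)+d(b,g,c)\bigr)-d(a,h,c)\ \leq\ d^{\mathrm{max}}(f,g,h),
\end{equation*}
which rearranges to the second inequality. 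There is no real obstacle here; the content of the corollary is entirely packaged in Propositions \ref{abstransYoneda} and \ref{Clower}, and the only work is to translate the metric correspondence distance back into the language of $d(\cdot,\cdot,\cdot)$.
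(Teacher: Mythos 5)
Your proof is correct and follows essentially the same route as the paper: both apply Proposition \ref{Clower} to the Yoneda functor $Y_u$ (an AC functor by Proposition \ref{abstransYoneda} under absolute transitivity), then unwind the definition of $d_{\mathcal{M}}$ and the composition in $\mathbf{MetCor}$ to split the resulting absolute-value bound into the two stated inequalities.
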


\begin{proof}
For any $f\in A(x,y)$, $g\in A(y,z)$ and $h\in A(x,z)$ we have 
\begin{equation*}
\sup_{a\in A(u,x),c\in A(u,z)}\left\vert d(a,h,c)-\!\!\inf_{b\in
A(u,y)}(d(a,f,b)+d(b,g,c))\right\vert \leq d^{\mathrm{max}}(f,g,h),
\end{equation*}%
by the corollary using the fact that $Y_{u}$ is an AC-functor. We get, for
any $a$ and $c$, the two inequalities 
\begin{equation*}
d(a,h,c)-\inf_{b\in A(u,y)}(d(a,f,b)+d(b,g,c)\leq d^{\mathrm{max}}(f,g,h)
\end{equation*}%
and 
\begin{equation*}
\inf_{b\in A(u,y)}(d(a,f,b)+d(b,g,c)-d(a,h,c)\leq d^{\mathrm{max}}(f,g,h).
\end{equation*}%
The first one implies the same inequality for any $b$, giving the first
statement of the corollary. The second one gives the second statement of the
corollary.
\end{proof}

\begin{corollary}
Suppose $(X,A,d)$ is absolutely transitive and satisfies Hypothesis \ref%
{graphcomp}. Then for any $f,g,h$ we have 
\begin{equation*}
\inf _{b\in A(x,y)} \mathrm{dist} _{A(x,y)} (f,b) + d(b,g,h) \leq d^{\mathrm{%
max}}(f,g,h). 
\end{equation*}
\end{corollary}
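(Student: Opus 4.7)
The plan is to deduce this as a direct specialization of the previous corollary, choosing the auxiliary object $u$ and the auxiliary arrows $a,c$ so that the term $d(a,h,c)$ on the right-hand side collapses to zero while the term $d(a,f,b)$ reduces to the pseudometric on $A(x,y)$.

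Concretely, I would apply the previous corollary with $u := x$, $a := 1_x \in A(x,x) = A(u,x)$, and $c := h \in A(x,z) = A(u,z)$. With these choices, the bound
\begin{equation*}
\inf _{b\in A(u,y)}\bigl(d(a,f,b) + d(b,g,c)\bigr) \leq d^{\mathrm{max}}(f,g,h) + d(a,h,c)
\end{equation*}
specializes to
\begin{equation*}
\inf _{b\in A(x,y)}\bigl(d(1_x,f,b) + d(b,g,h)\bigr) \leq d^{\mathrm{max}}(f,g,h) + d(1_x,h,h).
\end{equation*}

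The next step is to simplify each of the two distinguished terms. By the right identity axiom, $d(1_x,h,h) = 0$, so the extra term on the right disappears. By the definition of the pseudometric on arrow sets in Section~\ref{metarrowsets}, $d(1_x,f,b) = \phi(f,b) = \mathrm{dist}_{A(x,y)}(f,b)$. Substituting these into the specialization yields exactly the asserted inequality.

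There is no real obstacle here: the content of the corollary is entirely contained in the previous one, and the argument amounts to observing that the vertex $u$ can be taken to coincide with the source $x$, at which point the identity arrow $1_x$ turns one face of the tetrahedron into the trivial face and the other face into the defining expression for $\phi$. One small point worth noting is that Hypothesis~\ref{graphcomp} is needed only to ensure that $A(x,y)$ itself is nonempty (it contains $f$), so the infimum is genuinely taken over a nonempty set and the specialization is legitimate.
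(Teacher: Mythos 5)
Your proof is correct and is essentially identical to the paper's: both specialize the previous corollary to $u=x$, $a=1_x$, $c=h$, then use $d(1_x,h,h)=0$ and $d(1_x,f,b)=\mathrm{dist}_{A(x,y)}(f,b)$.
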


\begin{proof}
Apply the previous corollary to $u=x$, $a=1_{x}$ and $c=h$. Then%
\begin{equation*}
d(a,c,h)=d(1_{x},h,h)=0
\end{equation*}
and%
\begin{equation*}
d(a,f,b)=d(1_{x},f,b)=\mathrm{dist}_{A(x,y)}(f,b).
\end{equation*}
\end{proof}

We obtain the following embedding theorem.

\begin{theorem}
\label{main} Suppose $(X,A,d)$ is bounded, absolutely transitive and
satisfies Hypothesis \ref{graphcomp}. Then for any $f,g,h$ we have $d^{%
\mathrm{max}}(f,g,h)=d(f,g,h)$. Hence, there exists an AC functor $F:
(X,A,d)\rightarrow ({{\mathcal{C}}} , d_{{{\mathcal{C}}} })$ to a metrized
category, such that for any $f,g,h$ we have 
\begin{equation}  \label{Fequal}
d(f,g,h)= d_{{{\mathcal{C}}}}(F(g)\circ F(f), F(h)).
\end{equation}
\end{theorem}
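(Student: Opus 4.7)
The plan is to combine the inequality from Remark \ref{upperbound} (which gives $d^{\mathrm{max}}(f,g,h)\leq d(f,g,h)$) with a matching lower bound obtained from the corollary immediately preceding the theorem. That corollary, which is a consequence of the Yoneda functor $Y_x$ being an AC functor (using absolute transitivity and Hypothesis \ref{graphcomp}), says
\begin{equation*}
\inf_{b\in A(x,y)} \bigl(\mathrm{dist}_{A(x,y)}(f,b) + d(b,g,h)\bigr) \leq d^{\mathrm{max}}(f,g,h).
\end{equation*}
So the first task is to show that the infimum on the left is actually $\geq d(f,g,h)$, after which the equality $d^{\mathrm{max}}=d$ follows by sandwiching.

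For that bound, I would invoke Lemma \ref{phileft}: for every $b\in A(x,y)$ we have $d(f,g,h)\leq d(b,g,h) + \phi(f,b)$, which is exactly $d(f,g,h)\leq \mathrm{dist}_{A(x,y)}(f,b) + d(b,g,h)$. Taking the infimum over $b\in A(x,y)$ yields $d(f,g,h) \leq \inf_b (\mathrm{dist}_{A(x,y)}(f,b) + d(b,g,h))$, and chaining with the corollary gives $d(f,g,h)\leq d^{\mathrm{max}}(f,g,h)$. Together with Remark \ref{upperbound} this proves $d^{\mathrm{max}}(f,g,h)=d(f,g,h)$.

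For the second assertion, I would appeal directly to Proposition \ref{Clower}, which constructs a metrized category ${{\mathcal{C}}}^{\mathrm{max}}$ (the quotient of $\mathbf{Free}(X,A)$ by the null-distance equivalence of $d^{\mathrm{max}}$) together with an AC functor $F^{\mathrm{max}}:(X,A,d)\to {{\mathcal{C}}}^{\mathrm{max}}$ satisfying $d_{{{\mathcal{C}}}^{\mathrm{max}}}(F^{\mathrm{max}}(g)\circ F^{\mathrm{max}}(f), F^{\mathrm{max}}(h))= d^{\mathrm{max}}(f,g,h)$. Using the equality just proved, this coincides with $d(f,g,h)$, so taking ${{\mathcal{C}}} := {{\mathcal{C}}}^{\mathrm{max}}$ and $F := F^{\mathrm{max}}$ gives the required embedding.

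Essentially no obstacle remains once the preceding corollaries have been put in place; the only subtlety is to check that Lemma \ref{phileft} is applicable without additional hypotheses (it is, since $b$ and $f$ lie in the same arrow set $A(x,y)$ and $\phi$ is defined unconditionally on that set). Note that Hypothesis \ref{graphcomp} and boundedness are used implicitly through the earlier constructions of $d^{\mathrm{max}}$ and of the Yoneda functor, so no further invocation is needed here.
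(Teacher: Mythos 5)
Your proposal is correct and follows essentially the same route as the paper: the upper bound from Remark \ref{upperbound}, the lower bound obtained by combining Lemma \ref{phileft} (valid for each $b\in A(x,y)$, hence for the infimum) with the Yoneda-derived corollary preceding the theorem, and the final appeal to the last statement of Proposition \ref{Clower} for the functor. No gaps.
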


\begin{proof}
We have $d^{\mathrm{max}}(f,g,h)\leq d(f,g,h)$ by Remark \ref{upperbound}.
On the other hand, for any $b\in A(x,y)$ we have 
\begin{equation*}
d(f,g,h)\leq d(b,g,h) + \mathrm{dist}_{A(x,y)} (f,b), 
\end{equation*}
by Lemma \ref{phileft}. Applying the result of the previous corollary, this
gives 
\begin{equation*}
d(f,g,h)\leq d^{\mathrm{max}}(f,g,h). 
\end{equation*}
Now the last statement of Proposition \ref{Clower} gives existence of an AC
functor $F$ satisfying \eqref{Fequal}.
\end{proof}

\noindent \emph{Proof of Theorem \ref{mainintro}:} Notice that the functor
constructed in Theorem \ref{main} is the identity on the set of objects. The
target metrized category ${{\mathcal{C}}} ^{\mathrm{max}}$ is the quotient
of $\mathbf{Free}(X,A)$ by the equivalence relation induced by the
pseudo-metric $d^{\mathrm{max}}$. Assume that $(X,A,d)$ satisfies the
separation property (Definition \ref{separated}). The maps on arrow sets $%
F:A(x,y)\rightarrow {{\mathcal{C}}} (x,y)$ preserve distances, as may be
seen by taking $y=x$ and $f:= 1_x$ in the property. It follows that they are
injective, so we may consider that $A(x,y)\subset {{\mathcal{C}}} (x,y)$.
This gives the inclusion required to finish the proof of Theorem \ref%
{mainintro}. \hfill $\Box$

The following corollary may be viewed as an improvement of Theorem \ref%
{zerocat}.

\begin{corollary}
Suppose $(X,A,d)$ is bounded and $\epsilon $-categoric for all $\epsilon >0$%
. Then there exists an AC functor $F:(X,A,d)\rightarrow ({{\mathcal{C}}},d_{{%
{\mathcal{C}}}})$ to a metrized category, such that for any $f,g,h$ we have%
\begin{equation*}
d(f,g,h)=d_{{{\mathcal{C}}}}(F(g)\circ F(f),F(h))\text{.}
\end{equation*}
\end{corollary}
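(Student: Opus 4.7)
The plan is to deduce this corollary directly from Theorem \ref{main} by checking that its two non-trivial hypotheses — absolute transitivity and Hypothesis \ref{graphcomp} — are consequences of the $\epsilon$-categoric assumption for all $\epsilon > 0$. Boundedness is already assumed, so these are the only remaining items to verify.

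First, I would check Hypothesis \ref{graphcomp}. Suppose $A(x,y)$ and $A(y,z)$ are nonempty, say with elements $f$ and $g$. Applying the $\epsilon$-categoric property for any fixed value such as $\epsilon = 1$ yields some $h \in A(x,z)$ with $d(f,g,h) \leq 1$, so in particular $A(x,z)$ is nonempty. Thus Hypothesis \ref{graphcomp} holds.

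Second, absolute transitivity is precisely the content of Lemma \ref{epcattrans}, which asserts that $\epsilon$-categoric for all $\epsilon > 0$ implies absolute (left and right) transitivity. Combining these two observations with the hypothesis that $(X,A,d)$ is bounded, the hypotheses of Theorem \ref{main} are satisfied. That theorem then produces a metrized category $({{\mathcal{C}}}, d_{{{\mathcal{C}}}})$ and an AC functor $F : (X,A,d) \to ({{\mathcal{C}}}, d_{{{\mathcal{C}}}})$ with
\begin{equation*}
d(f,g,h) = d_{{{\mathcal{C}}}}(F(g)\circ F(f), F(h))
\end{equation*}
for all $f \in A(x,y)$, $g \in A(y,z)$, $h \in A(x,z)$, which is exactly the conclusion sought.

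There is no real obstacle: the corollary is a packaging of previously established results. The only point worth emphasizing is that the improvement over Theorem \ref{zerocat} consists in replacing the assumption of exact composability ($0$-categoric together with separation, which yields an honest composition law in $(X,A)$ itself) by the substantially weaker assumption that compositions exist only up to arbitrarily small error, at the cost of passing to a possibly larger target metrized category $\mathcal{C}$ rather than putting a categorical structure on $(X,A)$ directly.
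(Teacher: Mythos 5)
Your proposal is correct and follows exactly the same route as the paper: Hypothesis \ref{graphcomp} from the $\epsilon$-categoric condition for a single $\epsilon$, absolute transitivity from Lemma \ref{epcattrans}, and then an application of Theorem \ref{main}. Nothing is missing.
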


\begin{proof}
By Lemma \ref{epcattrans}, $(X,A,d)$ is absolutely transitive. Furthermore,
the $\epsilon $-categoric condition for any one $\epsilon $ implies
Hypothesis \ref{graphcomp}. Apply Theorem \ref{main}.
\end{proof}

\section{Paths in ${\mathbb{R}}^n$}

\label{pathsRn}

As we have noted above, the absolute transitivity condition doesn't apply to
the AC structure coming from a $2$-metric space. We look at how to calculate 
$d^{\mathrm{max}}$, but for simplicity we restrict to the standard example
of euclidean space as a $2$-metric space. In what follows, let $X:= {\mathbb{%
R}}^n$ with the standard $2$-metric: $d_X(x,y,z)$ is the area of the
triangle spanned by $x,y,z$. Let $(X,A,d)$ be the associated AC structure.
Recall that $A(x,y)=\{ \ast _{x,y}\}$.

Arrows in $\mathbf{Free}(X,A)$ have the following geometric interpretation.
An arrow from $x$ to $y$ corresponds to a composable sequence in $(X,A)$
going from $x$ to $y$, which in this case just means a sequence of points $%
(x_0,\ldots , x_k)$ with $x_0=x$ and $x_k=y$. We may picture this sequence
as being a piecewise linear path composed of the line segments $\overline{%
x_ix_{i+1}}$. Thus, we may consider elements $a\in \mathbf{Free}(X,A)(x,y)$
as being piecewise linear paths from $x$ to $y$.

Suppose $D\subset {\mathbb{R}}^2$ is a compact convex polygonal region. Its
boundary $\partial D$ is a closed piecewise linear path. If $s:D\rightarrow {%
\mathbb{R}}^n$ is a piecewise linear map, its boundary $\partial s$ is a
closed piecewise linear path in ${\mathbb{R}}^n$. We can divide $D$ up into
triangles on which $s$ is linear, and define $\mathrm{Area}(D,s)$ to be the
sum of the areas of the images of these triangles in ${\mathbb{R}}^n$. This
could also be written as 
\begin{equation*}
\mathrm{Area}(D,s)= \int _D |ds | . 
\end{equation*}

Suppose $a$ is a closed piecewise linear path in $X={\mathbb{R}}^{n}$. Put 
\begin{equation*}
\mathrm{MinArea}(a):=\inf_{\partial s=a}\mathrm{Area}(D,s)
\end{equation*}%
be the minimum of the area of piecewise linear maps from compact convex
polyhedral regions to $X$ with boundary $a$.

\noindent \emph{Remark:} $\mathrm{MinArea}(a)$ is also the minimum of areas
of piecewise $C^1$ maps from the disk, with boundary $a$.

If $f,g$ are piecewise linear paths from $x$ to $y$, let $g^{-1}f$ denote
the closed path based at $x$ obtained by following $f$ by the inverse of $g$
(the path $g$ run backwards). Define a metrized structure on the category $%
\mathbf{Free}(X,A)$ of piecewise linear paths, by 
\begin{equation*}
d_{\mathrm{Area}}(f,g):=\mathrm{MinArea}(g^{-1}f). 
\end{equation*}
The associated AC structure also denoted by $d_{\mathrm{Area}}$ is given by 
\begin{equation*}
d_{\mathrm{Area}}(f,g,h)=d_{\mathrm{Area}}(g\circ f, h) = \mathrm{MinArea}%
(h^{-1}gf). 
\end{equation*}

\begin{lemma}
Suppose two paths $f,g$ differ by an elementary move, in the sense that one
is $f=(x_0,\ldots , x_i,z,x_{i+1},\ldots , x_k)$ and the other is $%
g=(x_0,\ldots , x_i,z,x_{i+1},\ldots , x_k)$. Then 
\begin{equation*}
d^{\mathrm{max}}(f,g)\leq d_{\mathrm{Area}}(f,g)=d_X(x_i,z,x_{i+1}). 
\end{equation*}
\end{lemma}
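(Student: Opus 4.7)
The plan is to establish, separately, the upper bound $d^{\mathrm{max}}(f,g)\leq d_X(x_i,z,x_{i+1})$ and the equality $d_{\mathrm{Area}}(f,g)=d_X(x_i,z,x_{i+1})$; chaining these gives the lemma. (Throughout I take the convention that one path, say $f$, is $(x_0,\ldots,x_i,x_{i+1},\ldots,x_k)$ while the other is $g=(x_0,\ldots,x_i,z,x_{i+1},\ldots,x_k)$; the case where $z$ is inserted on the other side is symmetric.)

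To bound $d^{\mathrm{max}}(f,g)$, I would fix an arbitrary functorial distance $d_F\in\mathrm{dfunct}(X,A,d)$ and isolate the point of difference. Writing $p:=(x_0,\ldots,x_i)\in\mathbf{Free}(X,A)(x_0,x_i)$ and $q:=(x_{i+1},\ldots,x_k)\in\mathbf{Free}(X,A)(x_{i+1},x_k)$, we have
\begin{equation*}
f = q\circ\ast_{x_i,x_{i+1}}\circ p,\qquad g = q\circ(\ast_{z,x_{i+1}}\circ\ast_{x_i,z})\circ p.
\end{equation*}
Since $(\mathbf{Free}(X,A),d_F)$ is a pseudo-metrized category, two applications of the composition axiom \eqref{mcaxiom}, together with $d_F(p,p)=d_F(q,q)=0$, yield
\begin{equation*}
d_F(f,g)\leq d_F\bigl(\ast_{x_i,x_{i+1}},\,\ast_{z,x_{i+1}}\circ\ast_{x_i,z}\bigr).
\end{equation*}
The functorial distance condition applied to the AC triangle $(\ast_{x_i,z},\ast_{z,x_{i+1}},\ast_{x_i,x_{i+1}})$ bounds the right-hand side by $d(\ast_{x_i,z},\ast_{z,x_{i+1}},\ast_{x_i,x_{i+1}})=d_X(x_i,z,x_{i+1})$. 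Taking the supremum over $d_F\in\mathrm{dfunct}(X,A,d)$ gives $d^{\mathrm{max}}(f,g)\leq d_X(x_i,z,x_{i+1})$.

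For the equality $d_{\mathrm{Area}}(f,g)=d_X(x_i,z,x_{i+1})$, note that the loop $g^{-1}f$ consists of the chains $(x_0,\ldots,x_i)$ and $(x_{i+1},\ldots,x_k)$ traced forward and then immediately backward, together with the genuine triangular loop $x_i\to x_{i+1}\to z\to x_i$. For the upper bound I would write down an explicit piecewise linear filling on a convex polygonal region $D$: a central triangle mapped affinely onto $\{x_i,x_{i+1},z\}\subset X$ (contributing area $d_X(x_i,z,x_{i+1})$), glued along its other sides to narrow "strip" pieces which collapse to line segments under the map and hence contribute zero area. For the matching lower bound, given any piecewise linear $s:D\to\mathbb{R}^n$ with $\partial s=g^{-1}f$, I would postcompose with the orthogonal projection $\pi$ onto the affine plane spanned by $x_i,x_{i+1},z$; the boundary $\pi\circ\partial s$ winds once around the triangle $\{x_i,x_{i+1},z\}$, so by the area formula $\mathrm{Area}(D,s)\geq\mathrm{Area}(D,\pi\circ s)\geq d_X(x_i,z,x_{i+1})$.

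Combining the two parts gives $d^{\mathrm{max}}(f,g)\leq d_X(x_i,z,x_{i+1})=d_{\mathrm{Area}}(f,g)$, as required. The upper bound on $d^{\mathrm{max}}$ is essentially formal, being pure symbol-pushing from the pseudo-metrized category and functorial distance axioms. The main obstacle, and the only place where geometric content enters, is the lower bound for $d_{\mathrm{Area}}$: showing that no piecewise linear filling of the loop can do better than the flat triangle relies on a projection/winding-number argument for piecewise linear surfaces in $\mathbb{R}^n$, which requires some care (and uses the boundedness of $n$ implicitly through the existence of the orthogonal projection onto a $2$-plane).
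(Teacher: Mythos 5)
Your proposal is correct and follows essentially the same route as the paper: localize the difference to the middle segment via the composition axiom \eqref{mcaxiom} for pseudo-metrized categories (your version runs this for each $d_F$ and takes the supremum, the paper applies it directly to $d^{\mathrm{max}}$, which is equivalent), then invoke the defining bound $d^{\mathrm{max}}((\ast_{x_i,z},\ast_{z,x_{i+1}}),(\ast_{x_i,x_{i+1}}))\leq d_X(x_i,z,x_{i+1})$. For the identification $d_{\mathrm{Area}}(f,g)=d_X(x_i,z,x_{i+1})$ the paper simply asserts that the minimal filling area of a triangle is its area, whereas you supply the explicit filling and the projection/degree argument; that extra detail is sound and harmless.
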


\begin{proof}
By the axiom \eqref{mcaxiom} for a metrized category applied to the
compositions on either side with the paths $(x_0,\ldots , x_i)$ and $%
(x_{i+1}, \ldots , x_k)$, we get 
\begin{equation*}
d^{\mathrm{max}}(f,g)\leq d^{\mathrm{max}} ((x_i,z,x_{i+1}), (x_i,x_{i+1})). 
\end{equation*}
By the definition of $d^{\max}$, 
\begin{equation*}
d^{\mathrm{max}} ((x_i,z,x_{i+1}), (x_i,x_{i+1})) \leq d(\ast _{x_i,z}, \ast
_{z,x_{i+1}}, \ast _{x_i,x_{i+1}}) = d_X(x_i,z,x_{i+1}). 
\end{equation*}
Note that the minimal area of a disk whose boundary is a triangle, is the
area of the triangle, so $d_X(x_i,z,x_{i+1})$ is also equal to $d_{\mathrm{%
Area}}(f,g)$.
\end{proof}

\begin{theorem}
For $(X,A)$ the AC structure associated to the $2$-metric space $X={\mathbb{R%
}}^{n}$ with its standard area metric, on the category of piecewise linear
paths $\mathbf{Free}(X,A)$ we have $d^{\mathrm{max}}=d_{\mathrm{Area}}$. In
particular, for $f\in A(x,y)$, $g\in A(y,z)$ and $h\in A(x,z)$ we have%
\begin{equation*}
d(f,g,h)=d_{X}(x,y,z)=d^{\mathrm{max}}(g\circ f,h)\text{.}
\end{equation*}
\end{theorem}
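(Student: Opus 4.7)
The plan is to prove $d^{\mathrm{max}} = d_{\mathrm{Area}}$ on $\mathbf{Free}(X,A)$ via two inequalities; the ``in particular'' clause then follows immediately, since for a triple $(f,g,h)$ in $A$ the value $d^{\mathrm{max}}(g\circ f, h) = d_{\mathrm{Area}}((x,y,z),(x,z))$ is the minimum filling area of the loop $x \to y \to z \to x$, which equals the straight triangle area $d_{X}(x,y,z) = d(f,g,h)$.

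For $d^{\mathrm{max}} \leq d_{\mathrm{Area}}$, I would bound $d^{\mathrm{max}}(f,g)$ by the area of an arbitrary piecewise-linear filling $s: D \to \mathbb{R}^{n}$ with $\partial s = g^{-1}f$. After refining the triangulation so that $s$ is linear on each triangle and the vertices of $f$ and $g$ appear among the vertices of $\partial D$, peel triangles off one by one from the $f$-side of $\partial D$, in an order such that at each step the current interface between processed and unprocessed regions is a vertex-sequence path from $x$ to $y$, starting at $f$ and ending at $g$. Each removal is either a single-vertex insertion (if the removed triangle contributes one edge to the current interface) or deletion (if it contributes two adjacent edges); the preceding lemma bounds the $d^{\mathrm{max}}$-cost of each such step by the area of the image of the removed triangle. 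Summing via the triangle inequality,
\[
d^{\mathrm{max}}(f,g) \leq \sum_{i=1}^{N} \mathrm{area}(s(T_{i})) = \mathrm{Area}(D,s),
\]
and taking the infimum over $(D,s)$ gives the desired bound.

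For the reverse inequality, I would verify that $d_{\mathrm{Area}}$ itself lies in $\mathrm{dfunct}(X,A,d)$; since $d^{\mathrm{max}}$ is the supremum over this set, $d_{\mathrm{Area}} \leq d^{\mathrm{max}}$ follows. Symmetry is immediate, and the triangle inequality $d_{\mathrm{Area}}(f,h) \leq d_{\mathrm{Area}}(f,g) + d_{\mathrm{Area}}(g,h)$ comes from gluing two given fillings along their common path $g$. Composition compatibility \eqref{mcaxiom} is obtained by placing fillings of $g^{-1}f$ and $g'^{-1}f'$ side-by-side in a rectangle whose outer vertical sides collapse respectively to the endpoints $x$ and $z$, yielding a filling of $(g'g)^{-1}(f'f)$ with the sum of the two areas. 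Finally, $d_{\mathrm{Area}}((\ast_{x,y},\ast_{y,z}),(\ast_{x,z})) \leq d_{X}(x,y,z)$ because the straight triangle $xyz$ already furnishes a filling of that area, and $d_{\mathrm{Area}}((1_{x}),()_{x}) = 0$ trivially.

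The main obstacle will be the shelling step in the upper bound: we must order the triangles so that at every stage the current interface genuinely is a path of the form $(x_{0},\ldots,x_{k})$ obtained from the previous one by a single elementary move in the sense of the lemma. This forces each successive $T_{i}$ to meet the current boundary in either exactly one edge, or in two \emph{adjacent} edges sharing a vertex --- never in two non-adjacent edges or in three edges (apart from the final triangle). Existence of such orderings for triangulated $2$-disks (``ear-pruning'') is standard, but enforcing the additional constraint that each successive interface remain a path from $x$ to $y$ --- rather than some arbitrary sub-arc of the boundary --- and handling correctly the endpoints where the $f$-arc meets the $g^{-1}$-arc of $\partial D$, is where the real combinatorial work lies.
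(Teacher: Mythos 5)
Your proposal follows essentially the same route as the paper's own (sketched) proof: the lower bound $d_{\mathrm{Area}}\leq d^{\mathrm{max}}$ by checking that $d_{\mathrm{Area}}$ is a functorial distance, and the upper bound by decomposing a filling into triangles, reducing to elementary moves controlled by the preceding lemma, and summing via the triangle inequality. The shelling issue you flag at the end is real, but the paper's sketch passes over it just as quickly, so your write-up is, if anything, slightly more careful about where the remaining combinatorial work lies.
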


\emph{Sketch of proof:} The pseudo-metric $d_{\mathrm{Area}}$ gives a
structure of pseudo-metrized category on $\mathbf{Free}(X,A)$, which agrees
with $d$ on $(X,A)$. It follows from the definition of $d^{\mathrm{max}}$
that $d^{\mathrm{max}} \geq d_{\mathrm{Area}}$.

In the other direction, suppose $f,g\in \mathbf{Free}(X,A)(x,y)$ and suppose
we are given a piecewise linear map $s$ from a polyhedron $D$ to ${\mathbb{R}%
}^{n}$ with boundary $\partial s=g^{-1}f$. Dividing the polyhedron up into
triangles, we obtain a sequence of paths $f_{0}=f,f_{1},\ldots ,f_{m}=g$
such that $f_{i}$ and $f_{i+1}$ differ by an elementary move as in the
previous lemma, and the triangles occurring in these elementary moves
together make up the polyhedron $D$. Applying the previous lemma, and in
view of the fact that $\mathrm{Area}(D,s)$ is the sum of the areas of the
triangles in ${\mathbb{R}}^{n}$ in the image of $D$, we have 
\begin{equation*}
\sum_{i=1^{k}}d^{\mathrm{max}}(f_{i-1},f_{i})\leq \mathrm{Area}(D,s).
\end{equation*}%
By the triangle inequality for $d^{\mathrm{max}}$ we get 
\begin{equation*}
d^{\mathrm{max}}(f,g)\leq \mathrm{Area}(D,s),
\end{equation*}%
and since $d_{\mathrm{Area}}(f,g)$ is the minimum of $\mathrm{Area}(D,s)$
over all choices of $D,s$, this shows that $d^{\mathrm{max}}\leq d_{\mathrm{%
Area}}$. Therefore $d^{\mathrm{max}}=d_{\mathrm{Area}}$. \hfill $\Box $

\section{Further questions}

\label{questions}

\subsection{More transitivity conditions}

\label{moretrans} Various other transitivity conditions may be considered.
For example, a weaker \emph{$\leq \epsilon$ transitivity axiom} would
require existence of $m$ only when the right hand side is $\leq \epsilon$.

The \emph{small transitivity condition} is that if $f\in A(x,y)$ and $g\in
A(y,z)$ then there exists $m\in A(x,z)$ such that 
\begin{equation*}
d(f,g,m)\leq \mathrm{min}\{ \alpha (f),\alpha (g)\} . 
\end{equation*}

We could also weaken the basic transitivity conditions of Definition \ref%
{abstrans}, for example by allowing an error of some $\epsilon$. It would be
interesting to see what kinds of lower bounds for $d^{\mathrm{max}}$ could
be obtained with these conditions.

\subsection{Lower bound for $\protect\alpha$-transitive AC structures}

\label{alphalb} In Theorem \ref{main} we used the absolute transitivity
condition. However, in examples such as the AC-structure coming from a
transitive $2$-metric space, only transitivity relative to an amplitude $%
\alpha$ holds. Therefore, it is an interesting question to see what kind of
lower bound for $d^{\mathrm{max}}$ could be obtained using $\alpha$%
-transitivity.

\subsection{Paths in a $2$-metric space}

\label{plpath} In the example of a $2$-metric space $(X,d_{X})$, we put 
\begin{equation*}
A(x,y):=\{\ast _{x,y}\}\text{ and }d(\ast _{x,y},\ast _{y,z},\ast
_{x,z}):=d_{X}(x,y,z).
\end{equation*}
We can then construct the induced pseudometric $d^{\mathrm{maxs}}$ on $%
\mathbf{Free}(X,A)$. Morphisms in $\mathbf{Free}(X,A)$ may be viewed as
paths in $X$. If $X={\mathbb{R}}^{n}$ then we view such a path as a
piecewise linear path in ${\mathbb{R}}^{n}$, replacing $\ast _{x,y}$ by the
straight line segment joining $x$ to $y$, and we have seen in Section \ref%
{pathsRn} that $d^{\mathrm{max}}(f,g)$ is the minimal area of a disk whose
boundary consists of the paths $f$ and $g$, as in the example of \ref%
{expaths}. It will be interesting to generalize this to arbitrary $2$-metric
spaces.

\subsection{Natural transformations}

\label{nattrans}

Classically the next step after functors is to consider natural
transformations. It is an interesting question to understand the appropriate
generalization of this notion to the approximate categorical context.

Suppose $F,G:(X,A)\rightarrow (Y,B)$ are two prefunctorial maps. A \emph{%
pre-natural transformation} $\eta :F\rightarrow G$ is a function which for
any $x,y\in X$ and $f\in A(x,y)$ associates $\eta (f)\in B(Fx,Gy)$. We say
that $\eta $ is a \emph{$k$-natural transformation} if, for any $x,y,z\in X$
and $f\in A(x,y)$, $g\in A(y,z)$ and $h\in A(x,z)$ we have 
\begin{equation*}
d(F(f),\eta (g),\eta (h))\leq kd(f,g,h)\mbox{  and  }d(\eta (f),G(g),\eta
(h))\leq kd(f,g,h).
\end{equation*}

If $F$ is $k$-functorial then defining $\eta (f):= F(f)$ gives an ``identity
prenatural transformation'' from $F$ to itself, and it is also $k$-natural.

Suppose $F,G,H: (X,A)\rightarrow (Y,B)$ are three prefunctorial maps.
Suppose $\eta : F\rightarrow G$, $\zeta : G\rightarrow H$ and $\omega :
F\rightarrow H$ are prenatural transformations.

For any $x,y,z\in X$ and $f\in A(x,y)$, $g\in A(y,z)$ and $h\in A(x,z)$,
consider 
\begin{equation*}
Fx \overset{\eta (f)}{\rightarrow} Gy \overset{\zeta (g)}{\rightarrow} Fz 
\end{equation*}
and ask that 
\begin{equation*}
d(\eta (f), \zeta (g), \omega (h)) \leq kd(f,g,h) + \delta_0 (\eta , \zeta ,
\omega ). 
\end{equation*}
Let $\delta (\eta , \zeta , \omega )$ be the $\mathrm{inf}$ of the $\delta
_0(\eta , \zeta , \omega )$ which work here. We hope that this will allow to
define an approximate categorical structure on the functors and natural
transformations. This is left open as a question for the future.

\subsection{Correspondence functors}

\label{cfunctors}

It was useful to introduce a notion of metric correspondence between metric
spaces. One may ask whether this notion can be extended naturally to
metrized categories and AC structures.

Let $(X,A,d_{A})$ and $(Y,B,d_{B})$ be AC structures. We would like to
define a notion of functor from $A$ to $B$ using the idea of metric
correspondences on the morphism sets. Let us try as follows. Consider a
function $F:X\rightarrow Y$ and functions $f(x,x^{\prime },a,b)\in {\mathbb{R%
}}$ for any $x,x^{\prime }\in X$ and $a\in A(x,x^{\prime })$ and $b\in
B(Fx,Fx^{\prime })$. Roughly speaking we would like to have 
$$
\inf_{a^{\prime \prime }}(f(x,x^{\prime \prime },a^{\prime \prime
},b^{\prime \prime })+d_{A}(a,a^{\prime },a^{\prime \prime }) 
$$
$$
\leq \inf_{b,b^{\prime }}f(x,x^{\prime },a,b)+f(x^{\prime },x^{\prime \prime
},a^{\prime },b^{\prime }) 
+d_{B}(b,b^{\prime },b^{\prime \prime }).
$$
This translates also into: for any $x,x^{\prime },x^{\prime \prime }\in X$,
any $a\in A(x,x^{\prime })$, any $a^{\prime }\in A(x^{\prime },x^{\prime
\prime })$ and any $a^{\prime \prime }\in A(x,x^{\prime \prime })$, and any $%
b^{\prime \prime }\in B(Fx,Fx^{\prime \prime })$ we have 
$$
\inf_{
\begin{array}{c}
{\scriptstyle b\in B(Fx,Fx^{\prime }) }
\\
{\scriptstyle b^{\prime }\in B(Fx^{\prime }, Fx^{\prime
\prime })}
\end{array}}
\left(
\begin{array}{c}
f(x,x^{\prime },a,b)+f(x^{\prime },x^{\prime \prime },a^{\prime
},b^{\prime })
-f(x,x^{\prime \prime },a^{\prime \prime },b^{\prime \prime })
\medskip
\\
+d_{B}(b,b^{\prime },b^{\prime \prime }) 
\end{array}
\right)
$$
$$
\leq  d_{A}(a,a^{\prime
},a^{\prime \prime }).
$$

\subsection{Reconstruction questions}

\label{reconstruction}

Given a category $(X,C, \circ )$ with a metric structure $\psi$, we get an
approximate categorical structure by Proposition \ref{mcac}.

We could then consider a collection of subsets $A(x,y)\subset C(x,y)$ such
that $A(x,x)$ contains $1_x$. This will again give an AC structure.

One question will be, to what extent can we recover the structure of $C$
just by knowing $(X,A,d)$? Our main theorem provides an existence result in
the absolutely transitive case. One may then ask, to what extent is the
enveloping metrized category unique?

Another somewhat similar question: suppose $A(x,y)=C(x,y)$. However, suppose 
$d^{\prime }$ is a different AC-structure obtained by perturbing the original one.
For example, note that $d^{\prime }=d+\epsilon d_1$ is again an AC-structure for any
AC-structure $d_1$. Question: can we recover the categorical structure, i.e. the
composition $\circ$, from the perturbed $d^{\prime }$?

The AC-structures on $(X,A)$ form a cone, because if $d_1$ and $d_2$ are
AC-structures and $c_1,c_2$ are positive constants then $c_1d_1 + c_2d_2$ is
again an AC-structure. So, another question is, what properties does this
cone have? What do the boundary points or extremal rays look like?

The picture in subsection \ref{exfinite} suggests that we should look for the 
position of the categorical structures within this cone. 
To recast the questions of two paragraphs ago,
given a finite graph, are there small values of $\epsilon$ such
that any $\epsilon$-categorical structure is near to a $0$-categorical structure? 

\subsection{Category theory}

\label{cattheory}

To what extent can we generalize the classical structures and constructions
of category theory to the situation of metrized categories and AC structures?

\subsection{Fixed points and optimization}

\label{fixedpoints}

One of our main original motivations for looking at $2$-metric spaces in 
\cite{AS1, AS2} was to consider the theory of fixed points and other fixed
subsets such as lines. In the more general setting of AC structures, we can
envision several different kinds of fixed point and iteration problems, for
example fixed points of a functor, fixed points of a metric correspondence
between metric spaces, as well as fixed arrows within a metrized category or
AC structure. It will be interesting to see what applications these might
have to optimization problems.

\end{document}